\NeedsTeXFormat{LaTeX2e}

\documentclass[10pt]{scrartcl}

\usepackage{tikz}
\usetikzlibrary{matrix,arrows,decorations.pathmorphing,shapes.geometric}

\usepackage{etex}
\parskip=3pt
\usepackage{subcaption}
\usepackage[utf8]{inputenc}
\usepackage{a4wide}
\usepackage{graphicx}
\usepackage{wrapfig}
\usepackage[margin=3cm]{geometry}
\usepackage{dingbat}
\usepackage{amsmath,accents}
\usepackage{amsthm}
\usepackage{amssymb}

\usepackage{mathrsfs, mathtools}
\usepackage{overpic}

\usepackage[utf8]{inputenc}

\usepackage{tikz}
\usetikzlibrary{matrix,arrows,decorations.pathmorphing,shapes.geometric}
\usepackage{tikz-cd}
\usepackage{needspace}
\newcommand{\spaceplease}{\needspace{5\baselineskip}}
\usetikzlibrary{decorations.markings}

\usetikzlibrary{backgrounds}

\usetikzlibrary{decorations.markings}
\usetikzlibrary{backgrounds}

\setlength{\footskip}{10mm}
\usepackage{etex}

% this style is applied by default to any tikzpicture included via \tikzfig
\tikzstyle{tikzfig}=[baseline=-0.25em,scale=0.5]

% these are dummy properties used by TikZiT, but ignored by LaTex
\pgfkeys{/tikz/tikzit fill/.initial=0}
\pgfkeys{/tikz/tikzit draw/.initial=0}
\pgfkeys{/tikz/tikzit shape/.initial=0}
\pgfkeys{/tikz/tikzit category/.initial=0}

% standard layers used in .tikz files
\pgfdeclarelayer{edgelayer}
\pgfdeclarelayer{nodelayer}
\pgfsetlayers{background,edgelayer,nodelayer,main}

% style for blank nodes
\tikzstyle{none}=[inner sep=0mm]

% include a .tikz file
\newcommand{\tikzfig}[1]{%
	{\tikzstyle{every picture}=[tikzfig]
		\IfFileExists{#1.tikz}
		{\input{#1.tikz}}
		{%
			\IfFileExists{./#1.tikz}
			{\input{./#1.tikz}}
			{\tikz[baseline=-0.5em]{\node[draw=red,font=\color{red},fill=red!10!white] {\textit{#1}};}}%
	}}%
}

% fix strange self-loops, which are PGF/TikZ default
\tikzstyle{every loop}=[]

\usepackage{tikzit}
% TiKZ style file generated by TikZiT. You may edit this file manually,
% but some things (e.g. comments) may be overwritten. To be readable in
% TikZiT, the only non-comment lines must be of the form:
% \tikzstyle{NAME}=[PROPERTY LIST]

% Node styles
\tikzstyle{nc}=[fill=white, draw=black, shape=circle, minimum size=0.40cm]
\tikzstyle{nr}=[fill=white, draw=black, shape=rectangle, minimum width=0.60cm, minimum height=0.60cm]
\tikzstyle{nrg}=[fill={rgb,255: red,199; green,199; blue,199}, draw=black, shape=rectangle]
\tikzstyle{sr}=[fill=white, draw=black, shape=rectangle, minimum width=0.60cm, minimum height=0.40cm, inner sep=0pt]
\tikzstyle{lsr}=[fill=white, draw=black, shape=rectangle, minimum width=0.60cm, minimum height=0.40cm, inner sep=0pt, rotate=30]
\tikzstyle{rsr}=[fill=white, draw=black, shape=rectangle, minimum width=0.60cm, minimum height=0.40cm, inner sep=0pt, rotate=-30]
\tikzstyle{rrrsr}=[fill=white, draw=black, shape=rectangle, minimum width=0.60cm, minimum height=0.40cm, inner sep=0pt, rotate=-120]
\tikzstyle{ivsr}=[fill=none, draw=none, shape=rectangle, minimum width=0.60cm, minimum height=0.40cm, inner sep=0pt]
\tikzstyle{ivlsr}=[fill=none, draw=none, shape=rectangle, minimum width=0.60cm, minimum height=0.40cm, inner sep=0pt, rotate=30]
\tikzstyle{ivrsr}=[fill=none, draw=none, shape=rectangle, minimum width=0.60cm, minimum height=0.40cm, inner sep=0pt, rotate=-30]
\tikzstyle{ivrrrsr}=[fill=none, draw=none, shape=rectangle, minimum width=0.60cm, minimum height=0.40cm, inner sep=0pt, rotate=-120]
\tikzstyle{mr}=[fill=white, draw=black, shape=rectangle, minimum width=1.00cm, minimum height=0.50cm, inner sep=0pt]
\tikzstyle{product}=[fill={rgb,255: red,255; green,5; blue,80}, draw=black, shape=circle, inner sep=0pt, minimum size=0.15cm]
\tikzstyle{unit}=[fill={rgb,255: red,255; green,166; blue,217}, draw=black, shape=circle, inner sep=0pt, minimum size=0.15cm]
\tikzstyle{coproduct}=[fill={rgb,255: red,2; green,145; blue,255}, draw=black, shape=circle, inner sep=0pt, minimum size=0.15cm]
\tikzstyle{counit}=[fill={rgb,255: red,165; green,219; blue,255}, draw=black, shape=circle, inner sep=0pt, minimum size=0.15cm]
\tikzstyle{antipode}=[fill=white, draw=black, shape=circle, inner sep=0pt, minimum size=0.15cm]
\tikzstyle{region}=[fill={rgb,255: red,191; green,191; blue,191}, draw={rgb,255: red,191; green,191; blue,191}, shape=circle, minimum size=0.80cm]
\tikzstyle{boundarydisc}=[fill={rgb,255: red,128; green,128; blue,128}, draw=black, shape=circle, minimum size=0.80cm]
\tikzstyle{identity}=[fill=black, draw=black, shape=circle, inner sep=0pt, minimum size=0.15cm]
\tikzstyle{S}=[fill=white, draw=black, shape=rectangle, minimum width=0.15cm, minimum height=0.15cm]
\tikzstyle{S-}=[fill={rgb,255: red,199; green,199; blue,199}, draw=black, shape=rectangle, minimum width=0.15cm, minimum height=0.15cm]
\tikzstyle{dot}=[fill=white, draw=black, shape=circle, inner sep=0pt, minimum size=0.07cm]
\tikzstyle{bdot}=[fill=black, draw=black, shape=circle, inner sep=0pt, minimum size=0.07cm]
\tikzstyle{m}=[fill={rgb,255: red,255; green,5; blue,80}, draw={rgb,255: red,255; green,5; blue,80}, shape=circle, inner sep=0pt, minimum size=0.14cm, tikzit shape=circle]
\tikzstyle{mod}=[fill={rgb,255: red,172; green,229; blue,232}, draw={rgb,255: red,14; green,115; blue,177}, shape=circle]
\tikzstyle{smod}=[fill={rgb,255: red,172; green,229; blue,232}, draw={rgb,255: red,14; green,115; blue,177}, shape=circle, inner sep=0pt, minimum size=0.15cm]
\tikzstyle{smod1}=[fill={rgb,255: red,150; green,255; blue,138}, draw={rgb,255: red,5; green,130; blue,3}, shape=circle, inner sep=0pt, minimum size=0.15cm]
\tikzstyle{smod2}=[fill={rgb,255: red,255; green,202; blue,239}, draw={rgb,255: red,126; green,0; blue,124}, shape=circle, inner sep=0pt, minimum size=0.15cm]
\tikzstyle{smod3}=[fill={rgb,255: red,255; green,213; blue,164}, draw={rgb,255: red,121; green,57; blue,2}, shape=circle, inner sep=0pt, minimum size=0.15cm]
\tikzstyle{mid-nc}=[fill=white, draw=black, shape=circle, inner sep=0pt, minimum size=0.25cm]
\tikzstyle{l70-none}=[fill=none, draw=none, shape=circle, rotate=70]

% Edge styles
\tikzstyle{di}=[draw=black, ->]
\tikzstyle{da}=[-, dashed]
\tikzstyle{da-di}=[dashed, ->]
\tikzstyle{th}=[-, thick]
\tikzstyle{th-di}=[->, thick]
\tikzstyle{th-da}=[-, dashed, thick]
\tikzstyle{gr}=[-, draw={rgb,255: red,0; green,186; blue,143}, thick]
\tikzstyle{gr-di}=[draw={rgb,255: red,0; green,186; blue,143}, ->, thick]
\tikzstyle{gr-da}=[-, dashed, draw={rgb,255: red,0; green,186; blue,143}, thick]
\tikzstyle{pu}=[-, draw={rgb,255: red,124; green,95; blue,239}, thick]
\tikzstyle{pu-da}=[-, dashed, draw={rgb,255: red,124; green,95; blue,239}, thick]
\tikzstyle{re}=[-, draw={rgb,255: red,255; green,5; blue,80}, thick]
\tikzstyle{re-di}=[->, draw={rgb,255: red,255; green,5; blue,80}, thick]
\tikzstyle{re-da}=[-, draw={rgb,255: red,255; green,5; blue,80}, dashed, thick]
\tikzstyle{bl}=[-, draw={rgb,255: red,14; green,115; blue,177}, thick]
\tikzstyle{bl-di}=[draw={rgb,255: red,14; green,115; blue,177}, ->, thick]
\tikzstyle{bl-da}=[-, draw={rgb,255: red,14; green,115; blue,177}, dashed, thick]
\tikzstyle{lgr}=[-, draw={rgb,255: red,188; green,220; blue,156}, thick]
\tikzstyle{lgr-da}=[-, draw={rgb,255: red,188; green,220; blue,156}, thick, dashed]
\tikzstyle{lbl}=[-, draw={rgb,255: red,168; green,207; blue,245}, thick]
\tikzstyle{lbl-da}=[-, draw={rgb,255: red,168; green,207; blue,245}, thick, dashed]
\tikzstyle{pi}=[-, draw={rgb,255: red,208; green,160; blue,160}, thick]
\tikzstyle{pi-da}=[-, draw={rgb,255: red,208; green,160; blue,160}, thick, dashed]
\tikzstyle{sh}=[-, draw={rgb,255: red,171; green,171; blue,171}]
\tikzstyle{sh-da}=[-, draw={rgb,255: red,171; green,171; blue,171}, dashed]
\tikzstyle{fi-gr}=[-, fill={rgb,255: red,194; green,228; blue,162}, draw={rgb,255: red,194; green,228; blue,162}]
\tikzstyle{fi-bl}=[-, fill={rgb,255: red,175; green,215; blue,255}, draw={rgb,255: red,175; green,215; blue,255}]
\tikzstyle{fi-pi}=[-, fill={rgb,255: red,246; green,188; blue,188}, draw={rgb,255: red,246; green,188; blue,188}]
\tikzstyle{fi-pu}=[-, fill={rgb,255: red,230; green,203; blue,246}, draw={rgb,255: red,230; green,203; blue,246}]
\tikzstyle{fi-ye}=[-, fill={rgb,255: red,255; green,255; blue,155}, draw={rgb,255: red,255; green,255; blue,155}]
\tikzstyle{fi-or}=[-, fill={rgb,255: red,255; green,197; blue,51}, draw={rgb,255: red,255; green,197; blue,51}]
\tikzstyle{fi-dg}=[-, fill={rgb,255: red,152; green,170; blue,139}, draw={rgb,255: red,152; green,170; blue,139}]
\tikzstyle{fi-db}=[-, fill={rgb,255: red,111; green,147; blue,183}, draw={rgb,255: red,111; green,147; blue,183}]
\tikzstyle{fi-sh}=[-, fill={rgb,255: red,222; green,222; blue,222}, draw={rgb,255: red,222; green,222; blue,222}]
\tikzstyle{fi-dsh}=[-, fill={rgb,255: red,204; green,204; blue,204}, draw={rgb,255: red,204; green,204; blue,204}]
\tikzstyle{vt}=[-, very thick]
\tikzstyle{vt-di}=[->, very thick]
\tikzstyle{vt-da}=[-, dashed, very thick]
\tikzstyle{vt-gr}=[-, draw={rgb,255: red,0; green,186; blue,143}, very thick]
\tikzstyle{vt-gr-di}=[draw={rgb,255: red,0; green,186; blue,143}, ->, very thick]
\tikzstyle{vt-gr-da}=[-, dashed, draw={rgb,255: red,0; green,186; blue,143}, very thick]
\tikzstyle{vt-pu}=[-, draw={rgb,255: red,124; green,95; blue,239}, very thick]
\tikzstyle{vt-pu-da}=[-, dashed, draw={rgb,255: red,124; green,95; blue,239}, very thick]
\tikzstyle{vt-re}=[-, draw={rgb,255: red,255; green,5; blue,80}, very thick]
\tikzstyle{vt-re-di}=[->, draw={rgb,255: red,255; green,5; blue,80}, very thick]
\tikzstyle{vt-re-da}=[-, draw={rgb,255: red,255; green,5; blue,80}, dashed, very thick]
\tikzstyle{vt-bl}=[-, draw={rgb,255: red,14; green,115; blue,177}, very thick]
\tikzstyle{vt-bl-di}=[draw={rgb,255: red,14; green,115; blue,177}, ->, very thick]
\tikzstyle{vt-bl-da}=[-, draw={rgb,255: red,14; green,115; blue,177}, dashed, very thick]
\tikzstyle{vt-bl-da-di}=[draw={rgb,255: red,14; green,115; blue,177}, ->, very thick, dashed]
\tikzstyle{vt-lgr}=[-, draw={rgb,255: red,188; green,220; blue,156}, very thick]
\tikzstyle{vt-lgr-di}=[draw={rgb,255: red,188; green,220; blue,156}, ->, very thick]
\tikzstyle{vt-lgr-da}=[-, draw={rgb,255: red,188; green,220; blue,156}, very thick, dashed]
\tikzstyle{vt-lbl}=[-, draw={rgb,255: red,168; green,207; blue,245}, very thick]
\tikzstyle{vt-lbl-di}=[->, draw={rgb,255: red,168; green,207; blue,245}, very thick]
\tikzstyle{vt-lbl-da}=[-, draw={rgb,255: red,168; green,207; blue,245}, very thick, dashed]
\tikzstyle{vt-lpp}=[-, draw={rgb,255: red,223; green,199; blue,240}, very thick]
\tikzstyle{vt-pi}=[-, draw={rgb,255: red,236; green,151; blue,151}, very thick]
\tikzstyle{vt-pi-da}=[-, draw={rgb,255: red,236; green,151; blue,151}, very thick, dashed]
\tikzstyle{vt-ye}=[-, draw={rgb,255: red,241; green,241; blue,115}, very thick]
\tikzstyle{vt-or}=[-, draw={rgb,255: red,245; green,150; blue,32}, very thick]
\tikzstyle{vt-sh}=[-, fill=none, draw={rgb,255: red,171; green,171; blue,171}, very thick]
\tikzstyle{vt-sh-di}=[draw={rgb,255: red,171; green,171; blue,171}, very thick, ->]
\tikzstyle{vt-sh-da}=[-, fill=none, draw={rgb,255: red,171; green,171; blue,171}, very thick, dashed]
\tikzstyle{fi-bx}=[-, fill={rgb,255: red,172; green,229; blue,232}, draw={rgb,255: red,14; green,115; blue,177}, thick]
\tikzstyle{ut-gr}=[-, draw={rgb,255: red,0; green,186; blue,143}, line width=3pt]
\tikzstyle{ut-gr-di}=[draw={rgb,255: red,0; green,186; blue,143}, ->, line width=3pt]
\tikzstyle{ut-ig}=[-, draw={rgb,255: red,255; green,5; blue,80}, line width=3pt]
\tikzstyle{ut-ig-di}=[draw={rgb,255: red,255; green,5; blue,80}, line width=3pt, ->]
\tikzstyle{fi-lbl}=[-, fill={rgb,255: red,213; green,237; blue,255}, draw={rgb,255: red,213; green,237; blue,255}]
\tikzstyle{fi-vlbl}=[-, fill={rgb,255: red,229; green,251; blue,255}, draw={rgb,255: red,229; green,251; blue,255}]
\tikzstyle{op-sh}=[-, draw={rgb,255: red,113; green,113; blue,113}, fill={rgb,255: red,113; green,113; blue,113}, opacity=0.5]

\usetikzlibrary{decorations.pathreplacing,decorations.markings}
\tikzset{
	% style to apply some styles to each segment of a path
	on each segment/.style={
		decorate,
		decoration={
			show path construction,
			moveto code={},
			lineto code={
				\path [#1]
				(\tikzinputsegmentfirst) -- (\tikzinputsegmentlast);
			},
			curveto code={
				\path [#1] (\tikzinputsegmentfirst)
				.. controls
				(\tikzinputsegmentsupporta) and (\tikzinputsegmentsupportb)
				..
				(\tikzinputsegmentlast);
			},
			closepath code={
				\path [#1]
				(\tikzinputsegmentfirst) -- (\tikzinputsegmentlast);
			},
		},
	},
	% style to add an arrow in the middle of a path
	mid arrow/.style={postaction={decorate,decoration={
				markings,
				mark=at position .5 with {\arrow[#1]{stealth}}
	}}},
}
\tikzset{%
	link/.style    = { white, double = blue, line width =2.0pt,
		double distance = 1.1pt },
	link2/.style    = { white, double = black, line width = 1.8pt,
		double distance = 0.4pt },
	channel/.style = { white, double = blue, line width = 0.8pt,
		double distance = 0.6pt },
}

\usepackage[all,cmtip]{xy}

\makeatletter
\DeclareRobustCommand{\em}{%
	\@nomath\em \if b\expandafter\@car\f@series\@nil
	\normalfont \else \slshape \fi}
\makeatother

\usepackage[hidelinks]{hyperref}

\numberwithin{equation}{section}
\usepackage{mathtools}
\mathtoolsset{showonlyrefs}
\numberwithin{equation}{section}

\newtheoremstyle{style1}% name of the style to be used
{13pt}% measure of space to leave above the theorem. E.g.: 3pt
{13pt}% measure of space to leave below the theorem. E.g.: 3pt
{}% name of font to use in the body of the theorem
{}% measure of space to indent
{\normalfont\bfseries}% name of head font
{.}% punctuation between head and body
{.5em}% space after theorem head; " " = normal interword space
{}

\theoremstyle{style1}

\newtheorem{definition}{Definition}[section]
\newtheorem{example}[definition]{Example}
\newtheorem{remark}[definition]{Remark}

\makeatletter
\newtheorem*{repd@theorem}{\repd@title}
\newcommand{\newrepdtheorem}[2]{%
	\newenvironment{repd#1}[1]{%
		\def\repd@title{#2 \ref{##1}}%
		\begin{repd@theorem}}%
		{\end{repd@theorem}}}
\makeatother

\newrepdtheorem{definition}{Definition}

\newcommand{\catf}[1]{{\mathsf{#1}}}

\newtheoremstyle{style2}% name of the style to be used
{13pt}% measure of space to leave above the theorem. E.g.: 3pt
{13pt}% measure of space to leave below the theorem. E.g.: 3pt
{\slshape}% name of font to use in the body of the theorem
{}% measure of space to indent
{\normalfont\bfseries}% name of head font
{.}% punctuation between head and body
{.5em}% space after theorem head; " " = normal interword space
{}

\theoremstyle{style2}

\makeatletter
\newtheorem*{rep@theorem}{\rep@title}
\newcommand{\newreptheorem}[2]{%
	\newenvironment{rep#1}[1]{%
		\def\rep@title{#2 \ref{##1}}%
		\begin{rep@theorem}}%
		{\end{rep@theorem}}}
\makeatother

\newreptheorem{theorem}{Theorem}
\newreptheorem{corollary}{Corollary}

\newtheorem{lemma}[definition]{Lemma}
\newtheorem{theorem}[definition]{Theorem}
\newtheorem{proposition}[definition]{Proposition}
\newtheorem{corollary}[definition]{Corollary}

\usepackage{tikz}
\usetikzlibrary{matrix,arrows,decorations.pathmorphing,shapes.geometric}
\usepackage{tikz-cd}

\usepackage{enumitem}

\usepackage{needspace}

%%%% Makros%%%% Makros

\newcommand{\Ca}{\mathcal{C}}

\newcommand{\Map}{\catf{Map}}
\newcommand{\Diff}{\catf{Diff}}

\newcommand{\ra}[1]{\xrightarrow{\ #1 \ }}

\newcommand{\sn}{\catf{sn}}
\newcommand{\SN}{\catf{SN}}
\newcommand{\snc}{\catf{sn}_\cat{C}}
\newcommand{\SNC}{\catf{SN}_\cat{C}}

\newcommand{\colim}{\operatorname{colim}}

\newcommand{\colimsub}[1]{\underset{#1}{\operatorname{colim}}\,}

\usepackage{pifont}

\newcommand{\cat}[1]{\mathcal{#1}}

\newcommand{\End}{\operatorname{End}}

\newcommand{\Hom}{\operatorname{Hom}}

\newcommand{\id}{\operatorname{id}}

\newcommand{\ev}{\operatorname{ev}}

\newcommand{\Cat}{\catf{Cat}}

\let\to\undefined
%\newcounter{todo}
%\setcounter{todo}{1}
%\newcommand{\todo}[2]{{\color{teal} \bfseries\ttfamily  \underline{\smallpencil   \quad \#\arabic{todo} :} %\addtocounter{todo}{1} \small #1 \qquad \raggedleft  \textsl{(added by #2)}\color{black}}}
\newcommand{\to}{\longrightarrow}
\let\mapsto\undefined
\newcommand{\mapsto}{\longmapsto}

\newcommand{\Rexf}{\catf{Rex}^{\mathsf{f}}}
\newcommand{\Lexf}{\catf{Lex}^\mathsf{f}}

\newcommand{\Bimod}{\catf{Bimod}_k}
\newcommand{\Bimodf}{\catf{Bimod}^\catf{f}_k}
\newcommand{\Vect}{\catf{Vect}_k}
\newcommand{\vect}{\catf{vect}_k}
\newcommand{\vectwok}{\catf{vect}}

\newcommand{\Graphs}{\catf{Graphs}}

\newcommand{\opp}{\text{opp}}

\let\colon\undefined\newcommand{\colon}{:}

\newcommand{\Proj}{\catf{Proj}\,}
\DeclareMathSymbol{\Phiit}{\mathalpha}{letters}{"08} % kursive griechische Buchstaben
\DeclareMathSymbol{\Psiit}{\mathalpha}{letters}{"09}
\DeclareMathSymbol{\Sigmait}{\mathalpha}{letters}{"06}
\DeclareMathSymbol{\Xiit}{\mathalpha}{letters}{"04}
\DeclareMathSymbol{\Piit}{\mathalpha}{letters}{"05}\let\Pi\undefined\newcommand{\Pi}{\Piit}
\DeclareMathSymbol{\Gammait}{\mathalpha}{letters}{"00}
\DeclareMathSymbol{\Omegait}{\mathalpha}{letters}{"0A}\let\Omega\undefined\newcommand{\Omega}{\Omegait}
\DeclareMathSymbol{\Upsilonit}{\mathalpha}{letters}{"07}
\DeclareMathSymbol{\Thetait}{\mathalpha}{letters}{"02}
\DeclareMathSymbol{\Lambdait}{\mathalpha}{letters}{"03}\let\Lambda\undefined\newcommand{\Lambda}{\Lambdait}

\let\Phi\undefined\newcommand{\Phi}{\Phiit}
\let\Sigma\undefined\newcommand{\Sigma}{\Sigmait}
\let\Psi\undefined\newcommand{\Psi}{\Psiit}
\let\Gamma\undefined\newcommand{\Gamma}{\Gammait}

\usepackage{multicol}
\newenvironment{pnum}{\begin{enumerate}[label=(\roman*)]}{\end{enumerate}}

\setkomafont{section}{\rmfamily\large}
\setkomafont{subsection}{\rmfamily}
\setkomafont{subsubsection}{\rmfamily}
\setkomafont{subparagraph}{\rmfamily} %{\normalfont\scshape}

\makeatletter
\renewcommand\section{\@startsection {section}{1}{\z@}%
	{-3.5ex \@plus -1ex \@minus -.2ex}%
	{2.3ex \@plus.2ex}%
	{\normalfont\scshape\centering}}
\makeatother

\usepackage{titletoc}

\dottedcontents{section}[1em]{\rmfamily}{2em}{0.2cm}
\dottedcontents{subsection}[0em]{}{3.3em}{1pc}

\usepackage{titlesec}
\titleformat{\subsection}[runin]% runin puts it in the same paragraph
{\normalfont\bfseries}% formatting commands to apply to the whole heading
{\thesubsection}% the label and number
{0.5em}% space between label/number and subsection title
{}% formatting commands applied just to subsection title
[.]% punctuation or other commands following subsection title

%%%%%%%%%%%%%%%%%%%%% comments %%%%%%%%%%%%%%%%%%%%%%%%%%%%%%%%%%
\definecolor{Blue}  {rgb} {0.282352,0.239215,0.803921}
\definecolor{Green} {rgb} {0.133333,0.545098,0.133333}
\definecolor{Red}   {rgb} {0.803921,0.000000,0.000000}
\definecolor{Violet}{rgb} {0.580392,0.000000,0.827450}

%%%%%%%%%%%%%%%%%%%%%%%%%%%%%%%%%%%%%%%%%%%%%%%%%%%%%%%%%%%%%%%%%%%%%%

%\usepackage{tikzit}
%\input{sample.tikzstyles}
\usepackage{multicol}

\usepackage{titletoc}
% \dottedcontents{<section>}[<left>]{<above-code>}
% {<label width>}{<leader width>}
\dottedcontents{section}[1.5em]{\rmfamily}{1.5em}{0.2cm}
\dottedcontents{subsection}[0em]{}{3.3em}{1pc}
%\contentsmargin{2.5cm}

% center the toc heading

\begin{document}
	\begin{flushright}
	{\sffamily	[ZMP-HH/23-23] \\ 
		Hamburger Beiträge zur Mathematik Nr.~955  }
		\end{flushright}
	\vspace*{0.5cm}
	\begin{center}	\textbf{\large{The Lyubashenko Modular Functor for Drinfeld Centers \\[0.5ex]
				via Non-Semisimple String-Nets}}\\	\vspace{1cm}	{\large Lukas Müller $^{a}$, Christoph Schweigert $^{b}$, 
			Lukas Woike $^{c}$ and Yang Yang $^{d}$ }\\ 	\vspace{5mm}

		\begin{minipage}{0.48\textwidth}\centering
			{\slshape $^a$ Perimeter Institute \\ 31 Caroline Street North \\  CA-N2L 2Y5 Waterloo }	
			\\[7pt]{\slshape $^b$ Fachbereich Mathematik \\ Universität Hamburg \\ Bereich Algebra und Zahlentheorie \\ Bundesstraße 55 \\ D-20146 Hamburg } 	
		\end{minipage}\hfill
		\begin{minipage}{0.48\textwidth}
			\centering 		{\slshape $^c$ Institut de Mathématiques de Bourgogne\\ 
				Université de Bourgogne  \\ UMR 5584 CNRS    \\	Faculté des Sciences Mirande \\ 9 Avenue Alain Savary \\  F-21078 Dijon }	\\[7pt]	{\slshape $^d$ Max-Planck-Institut für Mathematik \\ in den Naturwissenschaften\\ Inselstraße 22 \\ D-04103 Leipzig }
		\end{minipage}
	\end{center}	\vspace{0.3cm}	
	\begin{abstract}\noindent 
			 The Levin-Wen string-nets of a spherical fusion category $\mathcal{C}$ describe, by results of Kirillov and Bartlett, the representations of mapping class groups of closed surfaces obtained from the Turaev-Viro construction applied to $\mathcal{C}$. We provide a far-reaching generalization of this statement to arbitrary pivotal finite tensor categories, including non-semisimple or non-spherical ones: We show that the finitely cocompleted string-net modular functor built from the projective objects of a pivotal finite tensor category is equivalent to Lyubashenko's modular functor built from the Drinfeld center $Z(\mathcal{C})$. 
	\end{abstract}

	\tableofcontents
	
	\spaceplease
	\section{Introduction and summary}
	During the last few decades, an enormous amount of work has been
	devoted to the construction of representations of mapping class
	groups from
	certain monoidal and braided monoidal
	categories that appear in quantum algebra,
	for instance
	as categories of
	representations of quantum groups or vertex operator algebras.
	One of the classical constructions is the Turaev-Viro
	construction~\cite{turaevviro}
	that was developed further by Barrett-Westbury~\cite{barrettwestbury}.
	The input datum is a \emph{spherical fusion category}, a certain type of 
	monoidal category with rigid duals subject to finiteness and semisimplicity
	conditions.
	The construction yields for each surface $\Sigma$
	a finite-dimensional vector space $Z^\text{TV}_\cat{C}(\Sigma)$
	(for us, a surface is always
	compact and oriented; if $\Sigma$ has a boundary, additional boundary
	labels need to be specified), called the \emph{state space}. It comes with
	a representation of the mapping class group
	of $\Sigma$ on $Z^\text{TV}_\cat{C}(\Sigma)$.
	The assignment is compatible with gluing; it produces what is
	called a \emph{modular functor} in the sense of~\cite{Segal,ms89,turaev,tillmann,baki}.
	In fact, due to semisimplicity,
	one even obtains a three-dimensional topological
	field theory~\cite{turaevviro,barrettwestbury} that can be extended
	to codimension two~\cite{balsamkirillov,balsam2,balsam,tuvi,TV}.
	This topological field theory conjecturally extends down to the
	point: In \cite[Corollary~3.4.8]{dsps}
	a \emph{framed} three-dimensional fully extended topological field theory
	is built from a fusion category over a field of characteristic zero
	through the use of the cobordism hypothesis~\cite{baezdolan,lurietft}.
	According to~\cite[Conjecture~3.5.10]{dsps}, this topological field
	theory will descend to an oriented theory and, as a once-extended topological field theory, should coincide with the
	Turaev-Viro topological field theory.
	
	A different classical construction of mapping class
	group representations is the Reshetikhin-Turaev
	construction~\cite{rt1,rt2} that takes as an input a \emph{modular
		fusion category} $\cat{A}$,
	a certain type of finitely semisimple braided fusion category.
	The construction relies on surgery.
	Once again the construction yields
	a three-dimensional topological field theory $Z^\text{RT}_\cat{A}$
	that extends to codimension two~\cite{turaev,BDSPV15,TV}.

	The mapping class group representations produced in this way are
	sometimes referred to as \emph{quantum representations of mapping class
		groups}.
	These representations
	are highly non-trivial; they are, in certain examples, what is known as `asymptotically faithful'~\cite{andersenfaithful,fww}.
	However, they are generally difficult to describe explicitly.
	This creates a need for
	comparison results between different constructions because this
	allows to combine the knowledge that is 
	available separately on both constructions.
	The arguably most famous comparison result is the one between the
	Turaev-Viro and the Reshetikhin-Turaev construction: For a spherical
	fusion category $\cat{C}$
	over an algebraically closed field,
	we have the equivalence of once-extended three-dimensional topological field theories
	\begin{align}
	Z^\text{TV}_\cat{C} \simeq Z^\text{RT}_{Z(\cat{C})} \ ,  \label{tvvsrt}
	\end{align}
	 and
	hence in particular an equivalence of modular functors. Here $Z(\cat{C})$ is the
	Drinfeld center of $\cat{C}$,
	the
	braided monoidal category of all pairs $(X,\beta)$ of an object
	$X\in\cat{C}$ together with a half braiding $\beta : X \otimes -
	\cong -
	\otimes X$.
	Since $\cat{C}$ is a spherical fusion category, $Z(\cat{C})$ is a modular
	fusion category~\cite{mueger-sph}.
	The result~\eqref{tvvsrt} was established independently by
	Balsam-Kirillov~\cite{balsamkirillov,balsam2,balsam} (in characteristic
	zero) and Turaev-Virelizier~\cite{tuvi}, see also the monograph
	\cite[Section~17.1.2]{TV}.
	
	When it comes to obtaining an efficient description of the  mapping class group representations involved,
	\eqref{tvvsrt} is hardly helpful: Both the Turaev-Viro and the
	Reshetikhin-Turaev construction make heavy use
	of combinatorial tools like triangulations or surgery. While these tools
	are absolutely vital for the  invariants of three-dimensional manifolds that these topological
	field theories produce, it seems unnecessary for the description of the
	two-dimensional part of the theories.
	
	Indeed there is a third, skein-theoretic model that has
	gained popularity in recent years because it affords a direct
	topological construction of the mapping class group representations
	coming from the Turaev-Viro construction, namely the \emph{string-net model} that
	was developed by Levin-Wen~\cite{levinwen} for applications in
	condensed
	matter physics and refined by Kirillov~\cite{kirillovsn}. The idea
	of the
	string-net 	model for a spherical fusion category $\cat{C}$ is extremely
	simple: In order to obtain the string-net space $\SNC(\Sigma)$
	for a surface $\Sigma$ (for simplicity, let us assume it is
	closed for the moment), we draw, roughly, all possible finite oriented
	graphs on the surface, label the edges by objects in $\cat{C}$ and the
	vertices with morphisms (the source and target object are obtained from
	the incident labels).
	One then imposes local relations coming from the evaluation of
	string-nets in disk-shaped regions via the graphical calculus of $\cat{C}$.
	Through this definition, the graphical calculus is globalized from disks to surfaces.
	We recall in Section~\ref{secstringnet} how this intuitive idea is
	technically implemented.

	Kirillov proves in~\cite{kirillovsn} that
	\begin{align}
	Z^\text{TV}_\cat{C}(\Sigma)\cong \SNC(\Sigma)\label{eqntvsn}
	\end{align}
	as vector spaces.
	The equivariance of these isomorphisms under the mapping class
	group $\Map(\Sigma)$ of $\Sigma$
	is a rather subtle point:
	It follows from the construction of the three-dimensional
	string-net topological field theory of
	Bartlett-Goosen~\cite{bartlettgoosen}
	which, however, relies on the still unproven presentation of the
	three-dimensional bordism category through generators and relations
	used in~\cite{BDSPV15}.
	Nonetheless, by \cite{bartlett}
	this presentation can be replaced by a known
	presentation given by Juhász~\cite{juhasz} that circumvents this problem at least at the level of non-extended three-dimensional topological field theories. This gives us~\eqref{eqntvsn} as equivariant isomorphism for closed surfaces, but strictly speaking no equivalence of modular functors
	(the Juhász presentation does not include codimension two).
	Remarkably, a proof of~\eqref{eqntvsn}, even just
	as mapping class group representations for closed surfaces,
	 that does not pass through a presentation of
	the three-dimensional cobordism category in terms of generators and
	relations is
	not known.

	The purpose of this article is to generalize the string-net construction
	 beyond spherical fusion categories, more precisely to \emph{pivotal finite tensor categories} (this will be the easy part), and establish comparison results of the above flavor (this will be the hard part).
	Let us first clarify the terminology:
	 Finite tensor categories~\cite{etingofostrik} are linear abelian monoidal categories with rigid 	duals, a simple unit and some finiteness assumptions on the underlying linear category; a pivotal structure is a monoidal trivialization of the double dual functor.
	The reason why this is actually a vast and relatively involved generalization 
	is that,
	in contrast to fusion categories, finite tensor categories can be \emph{non-semisimple}
	(this will allow for a much richer supply of examples, most notably those coming from logarithmic conformal field theory). 
	While non-semisimplicity seems just
	like a technicality, it is known that it has the most dramatic consequences when it comes to the three mentioned constructions above:  
	The Reshetikhin-Turaev type topological field theory cannot exist as a once-extended three-dimensional topological field theory by the results of~\cite{BDSPV15}, but by a famous result of Lyubashenko~\cite{lyubacmp,lyu,lyulex}
	 a not necessarily semisimple modular category still gives rise to a modular functor. In~\cite{rggpmr,rggpmr2}, see also~\cite{bcgp} for the $\mathfrak{sl}(2)$-case, this modular functor is extended to a partially defined three-dimensional topological field theory, with the modified trace being used to produce interesting invariants of closed three-dimensional manifolds~\cite{cgp}. 
	The Turaev-Viro construction is not available in the non-semisimple case, but
	 a framed modular functor is built in~\cite{fssstatesum} using a state-sum construction.
	 
\subsection*{The main result}	 With this being the status quo, our path towards a generalization of the string-net construction and towards the best possible comparison results is the following:
	First we generalize the string-net construction to a construction that takes as an input a pivotal finite tensor category $\cat{C}$ and outputs a modular functor, i.e.\ a system of mapping class group representations. Neither sphericality in the sense of~\cite{dsps} nor semisimplicity of $\cat{C}$
	are needed.
	In fact, we even obtain an open-closed modular functor, which means that we include marked intervals on the boundaries of the surfaces. Setting up this generalization is not difficult, underlining once again the fact that
	 	the string-net construction is the easiest, most intuitive one of the above-mentioned constructions. On a technical level, it will be sufficient to adapt the generalized string-net construction of~\cite{fsy-sn} so that it can be applied to the tensor ideal of projective objects in a pivotal finite tensor category. 
	 	The idea of restricting to projective objects also underlies the recently introduced admissible skeins~\cite{asm} and ongoing work by Reutter and Walker~\cite{Reu,Walker2}. 
	 	Focusing on the projective objects is what gives non-semisimple modular functors the correct behavior under gluing, as was observed in~\cite{dva,dmf}. 
	 	We establish in Theorem~\ref{thmsnmf} that this non-semisimple string-net construction produces an open-closed modular functor.
	 	The only involved part of this construction is an excision result for string-nets (Theorem~\ref{thmexcision}) that, in this generality, is to the best of our knowledge
	 	not covered by classical gluing results for 
	 	 field theories built through skein-theoretic methods~\cite{Walker,skeinfin}. 
	 	
	 	Having the generalized string-net construction at our disposal,
	 	we can state our comparison result. More precisely,
	 	we will compare the modular functor $\SNC$
	 	built from string-nets with coefficients in a pivotal finite tensor category $\cat{C}$
	 	to the Lyubashenko modular functor for the Drinfeld center $Z(\cat{C})$ of $\cat{C}$; this amounts to a non-semisimple analog of the combination of~\eqref{tvvsrt} and~\eqref{eqntvsn}.

	 		\begin{reptheorem}{thmmain}
	 		For any pivotal finite tensor category $\cat{C}$, the string-net modular functor $\SNC$ associated 
	 	to $\cat{C}$ is equivalent to the Lyubashenko modular functor associated to the Drinfeld center $Z(\cat{C})$. 
	 	\end{reptheorem}

	 	The notion of equivalence of modular functors used here is the one from~\cite[Section~3.2]{brochierwoike}; the equivalence of the modular functor particularly implies that the (a priori projective)
	 	mapping class group representations are isomorphic.
	 	
	 	The reader may have noticed that if $\cat{C}$ is not spherical, $Z(\cat{C})$ does not have to be modular~\cite{shimizuribbon}. This has the consequence that Lyubashenko's original construction actually does not apply to $Z(\cat{C})$. Nonetheless in \cite[Section~8.4]{brochierwoike} the algebraic results of \cite{mwcenter} are used to build a modular functor from $Z(\cat{C})$ in this 
	 	 more general situation in a way that the original Lyubashenko modular functor is recovered in the spherical case. In Theorem~\ref{thmmain} the modular functor for $Z(\cat{C})$ has to be understood in this generalized sense.

	 \subsection*{The proof strategy}	The methods used 
	 	to prove Theorem~\ref{thmmain}
	 	are completely different from the methods used for the above comparison results~\eqref{tvvsrt}
	 	and~\eqref{eqntvsn} in the semisimple case.
	 	The Lyubashenko modular functor was originally built using generators and relations for the mapping class groups.
	 	A similar description for the topologically defined string-nets seems completely beyond reach.
	 	Instead, we make ample use of the factorization homology classification of modular functors~\cite{brochierwoike} that allows us to reduce
	 	 the comparison result between the string-nets for $\cat{C}$ and the modular functor for $Z(\cat{C})$ to a comparison between the ribbon Grothendieck-Verdier categories that both modular functors produce in genus zero~\cite{cyclic}. Nonetheless, no knowledge of factorization homology and almost no knowledge of Grothendieck-Verdier duality is needed for reading this article. 
	 	 The key ingredient for the genus zero comparison is a string-net description of the central monad (Section~\ref{seccentralmonad})
	 	 and an analysis of the Swiss-Cheese algebra obtained by evaluation of an open-closed version of the string-net modular functor in genus zero
	 	 (Section~\ref{secswisscheese}). The latter relies on Idrissi's characterization of categorical Swiss-Cheese algebras in~\cite{najib}. 
	 	 Grothendieck-Verdier duality comes into play to describe the topological duality that orientation reversal
	 	 induces on the category assigned by the string-net modular functor to the circle; it is one of the
	 	  insights of this article that this duality need not coincide with the rigid one, see Remark~\ref{remduality}.
	 	 
	 	 In the remaining subsections of this introduction,
	 	 let us highlight some applications, calculations and special cases of our main result.

	 	 \subsection*{The calculation of string-net spaces}
	 	 The string-net spaces are easy to define,
	 	  and the mapping class group action on them has a clear geometric origin. However, they 
	 	   have the disadvantage that the state spaces are difficult to compute from scratch, and not even their finite-dimensionality is obvious. 
	 	 With the comparison result from Theorem~\ref{thmmain}, we can express the string-net spaces  as follows
	 	 (we denote by $Z(\cat{C})(-,-)$ the morphism spaces in the Drinfeld center):

	 	 	\begin{repcorollary}{corconformalblocks}
	 	 	Let $\cat{C}$ be a pivotal finite tensor category and $\Sigma$ a surface with $n$ boundary components that are  labeled with $X_1 , \dots , X_n \in \cat{C}$.
	 	 	Then we can identify
	 	 	\begin{align}
	 	 		\SNC(\Sigma;X_1,\dots,X_n) \cong Z(\cat{C})  \left(  FX_1 \otimes \dots \otimes FX_n \otimes
	 	 		\left(\int_{X \in Z(\cat{C})} X \otimes X^\vee\right) ^{\otimes g} 
	 	 		, \alpha^{\otimes (g-1)}   \right) ^*    \ , 
	 	 	\end{align}
	 	 	where $F:\cat{C}\to Z(\cat{C})$ is left adjoint to the forgetful functor from $Z(\cat{C})$ to $\cat{C}$, and $\alpha$ is the distinguished invertible object of $\cat{C}$, seen as object in the Drinfeld center via the pivotal structure and the Radford isomorphism.
	 	 	This isomorphism intertwines  the $\Map(\Sigma)$-action if 	$\SNC(\Sigma;X_1,\dots,X_n)$ is equipped with the geometric action and the right hand side with the (generalized) Lyubashenko action.
	 	 \end{repcorollary}
	 	 
	 	 With Corollary~\ref{corconformalblocks}, we discuss in Example~\ref{exsphH} the case where $\cat{C}$ is given by modules over a spherical Hopf algebra and give lower and upper bounds for the dimensions of string-net spaces. 
	 	 A full calculation of the string-net spaces in a non-spherical situation is given in Example~\ref{exgroupG}.

	 	 We should highlight 
	 	  that it is, in the general case, only the comparison 
	 	  result that allows us to conclude when string-net 
	 	  spaces are even non-zero. 
	 	 Indeed, this can hardly be decided through direct
	 	 computations  using the string-nets. 
	 	 One might think that it might be possible to find certain `generic' non-zero contributions to the string-net spaces (through local calculations or estimates, as they are available for spherical Hopf algebras), but the following result shows that this picture is too na\" ive:

	 		\begin{repcorollary}{corzero}
	 		Fix a closed surface $\Sigma$
	 		 of genus $g\neq 1$.
	 		Then there exists a pivotal finite tensor category $\cat{C}$ such that $\SNC(\Sigma)=0$, even though the modular functor $\SNC$ is still overall non-trivial, i.e.\ non-zero on some other closed surface.
	 	\end{repcorollary}

	 	\subsection*{The semisimple situation} 
	 	In the semisimple spherical case, our main
	 	result specializes to the statement that the string-net construction for a spherical fusion category $\cat{C}$ describes the Reshetikhin-Turaev type mapping class group representations for closed surfaces constructed from the modular fusion category $Z(\cat{C})$. Since the latter are equivalent to the Turaev-Viro type mapping class group representations built from $\cat{C}$, we recover the result
	 	 from~\cite{kirillovsn,bartlett}. It should be pointed out that our proof, also in the semisimple case, is genuinely new as it is two-dimensional. 
	 	 Moreover, the comparison as modular functors, as opposed to a comparison of representations of mapping class groups of surfaces, is to the best of our knowledge new.
	 	  One might also slightly shift the perspective:
	 	 As opposed to \cite{kirillovsn,bartlett}, we compare the string-net construction directly with the Reshetikhin-Turaev construction for the Drinfeld center, without passing through the Turaev-Viro construction. We can therefore use \cite{kirillovsn,bartlett} in combination \cite{BDSPV15} to give a new proof of $Z^\text{TV}_\cat{C} \simeq Z^\text{RT}_{Z(\cat{C})}$ (Corollary~\ref{cortvrt}). 
	 	 
	 	For a non-spherical fusion category $\cat{C}$, the string-net construction was considered in~\cite{Non-s-string-nets}. We prove that this construction produces the modular functor for the Drinfeld center $Z(\cat{C})$, but equipped with a Grothendieck-Verdier duality that does not coincide with the rigid duality (Corollary~\ref{corsnr}).
	 	
	 	\spaceplease
	 	\subsection*{Anomaly-freeness and extensions}
	 	As an application of our main result, we conclude from the properties of the string-nets:
	 	\begin{repcorollary}{coranomalyopenclosed}
	 		For a pivotal finite tensor category $\cat{C}$, the Lyubashenko modular functor for the Drinfeld center
	 		$Z(\cat{C})$ \begin{pnum}\item is anomaly-free \item and 
	 			extends to an open-closed modular functor sending the open boundary to $\cat{C}$. \end{pnum}
	 	\end{repcorollary}
	 	
	 	In the semisimple situation, the first result can be deduced from $Z^\text{TV}_\cat{C} \simeq Z^\text{RT}_{Z(\cat{C})}$. In the non-semisimple case, they are expected by experts, but we are not aware of a proof,
	 	and establishing these claims directly using the Lyubashenko modular functor seems relatively tedious.

	 		\vspace*{0.2cm}\textsc{Acknowledgments.} 
	 	We thank David Reutter for helpful discussions related to skein theory with projective labels. 
	 	LM gratefully acknowledges support of the Simons Collaboration on Global Categorical Symmetries. Research at Perimeter Institute is supported in part by the Government of Canada through the Department of Innovation, Science and Economic Development and by the Province of Ontario through the Ministry of Colleges and Universities. The Perimeter Institute is in the Haldimand Tract, land promised to the Six Nations. CS is supported by the Deutsche Forschungsgemeinschaft (DFG, German Research Foundation) under SCHW1162/6-1 and under
	 	Germany’s Excellence Strategy - EXC 2121 “Quantum Universe” - 390833306.
	 	LW  gratefully acknowledges support
	 	by the ANR project CPJ n°ANR-22-CPJ1-0001-01 at the Institut de Mathématiques de Bourgogne (IMB).
	 	The IMB receives support from the EIPHI Graduate School (ANR-17-EURE-0002).
	 	YY gratefully acknowledges support from the research group Mathematical Structures in Physics at the Max Planck Institute for Mathematics in the Sciences in Leipzig.

	\section{The non-semisimple string-net construction for the tensor ideal of projective objects\label{secstringnet}}
	In this section, we recall the construction of the string-net vector spaces for surfaces that was inspired by the work of
	 Levin and Wen~\cite{levinwen} and subsequently developed further in \cite{kirillovsn}. 
	 The presentation below is self-contained, but somewhat condensed.
	 On a technical level, it uses insights from a 
	  recent generalization of the string-net construction in \cite{fsy-sn}.

	In the sequel, a \emph{surface} will be an abbreviation for compact oriented two-dimensional manifold with boundary (which can be empty).
For a surface, the following will always be implicitly part of the data:
	The surface comes with a designated subset of its boundary components, the \emph{closed boundary components} which are parametrized, meaning that each of the boundary components is identified with a standard circle through a diffeomorphism which is part of the data. 
	In the remaining boundary components, we allow a finite number of disjoint parametrized intervals, the so-called \emph{open boundary intervals}.
	The boundary components that are neither among the closed boundary components nor have embedded intervals in them are the \emph{free boundary components}.

Next let us establish our conventions regarding graphs:
	A \emph{finite combinatorial graph} $\Gamma$ consists of a finite set $V$ of vertices, a finite set $H$ of half edges, a source map $s : H \to V$ (assigning to a half edge
	 its source vertex) and an involution $i : H \to H$ 
	(gluing half edges together).
	The orbits of the involution are called the \emph{edges} of the graph. Its fixed points are referred to as \emph{legs}.	Edges which are not legs will be called \emph{internal edges}. 
	For any vertex $v \in V$, we call $|v|:=|s^{-1}(v)|$ the \emph{valence of $v$};
	we require that it is always non-zero.  
	For any finite combinatorial graph $\Gamma$, we denote by $|\Gamma|$ the 1-dimensional CW complex obtained by geometric realization. 
	A finite combinatorial graph $\Gamma$ is called a \emph{tree} if $|\Gamma|$ is contractible. 
	A special case of a tree is a \emph{corolla}, a graph with one vertex and a finite number of legs attached to it.

	The role of decoration data for the string-nets that we will define in this section
	will be played by a \emph{pivotal tensor category} in the sense of Etingof-Ostrik; we will recall here the basic notions and refer to \cite{etingofostrik} for details.
	A \emph{finite} (linear) category\label{finitelinearpage} is an abelian linear category
	(over an algebraically closed field $k$
	 that we fix throughout)
	 with finite-dimensional morphism spaces, enough projective objects, finitely many
	isomorphism classes of simple objects, such that every object has finite length; these are exactly those linear categories linearly equivalent to finite-dimensional modules over some finite-dimensional algebra.
	A \emph{finite tensor category} $\cat{C}$
	is a finite category equipped with a left and right rigid monoidal product with simple unit $I$. It is a consequence of this definition that any finite tensor category is \emph{self-injective}, i.e.\ the projective objects are exactly the injective ones.
	A pivotal structure on a finite tensor category is a monoidal isomorphism from the double dual functor to the identity. We denote  the duality by $-^\vee : \cat{C}^\opp \to \cat{C}$ (thanks to pivotality, left and right duality can be identified).
	
There is another condition that will not play a role in the next definition, but only later, namely \emph{sphericality}:
Etingof-Nikshych-Ostrik define in~\cite{eno-d} the
distinguished invertible object $\alpha \in \cat{C}$ of a finite tensor category $\cat{C}$. This object
 describes the quadruple dual of 
  $\cat{C}$ via the \emph{Radford isomorphism}
\begin{align}
-^{\vee \vee\vee\vee}\cong \alpha \otimes-\otimes \alpha^{-1} \ . \label{eqnradford}
\end{align}
One says that $\cat{C}$ is \emph{unimodular} if $\alpha \cong I$. 
As a result, any unimodular pivotal finite tensor category $\cat{C}$ comes with two trivializations
of $-^{\vee \vee\vee\vee}$, namely the one arising from $\alpha \cong I$
(since $I$ is simple, this trivialization does not depend on the choice of the isomorphism $\alpha \cong I$),
and the square of the pivotal structure.
A \emph{spherical finite tensor category} in the sense of Douglas-Schommer-Pries-Snyder~\cite[Definition~3.5.2]{dsps} 
is a unimodular pivotal finite tensor category such that these two trivializations of $-^{\vee\vee\vee\vee}$ agree.
 We should warn the reader that the notion `spherical' that is defined via the quantum trace will generally  only agree 
 with the notion just defined in the semisimple case~\cite[Proposition~3.5.4 \& Example~3.5.5]{dsps}.

	\spaceplease
	\begin{definition}[$\text{Labeled graphs on a surface, following \cite[Definition~3.6]{fsy-sn}}$]\label{defclabaledgraph}
		Let $\Sigma$ be a surface and $\cat{C}$ a pivotal finite tensor category.
		A \emph{$\cat{C}$-labeled graph on $\Sigma$} is the following data:
		\begin{itemize}
			\item A non-empty finite combinatorial graph $\Gamma$ whose edges are each
			labeled with a projective object in $\cat{C}$ and an orientation of the edge. We will frequently suppress the orientation in what follows and only mention it when it is necessary.

			\item An  \emph{admissible embedding} of $|\Gamma|$ into $\Sigma$. Here \emph{admissible} means the following:
			
			\begin{itemize}
				\item  The part of $|\Gamma|$ that is mapped to $\partial \Sigma$ consists precisely of the endpoints of the legs (those ends of the legs which do not lie in a vertex; in particular,  vertices and internal edges can never be mapped to the boundary). We require the intersection of the image of the external legs with $\partial \Sigma$ to be transversal.
				In the sequel, we will often identify a graph with its image under the embedding.
				
				\item Each closed boundary component and each open boundary interval 
				of $\partial \Sigma$ is hit by at least one leg. 
				The free boundary components are not hit by legs.

			\end{itemize}
		\end{itemize}
			We write a $\cat{C}$-labeled graph in $\Sigma$ as pair $(\Gamma,\underline{X})$
			of the graph $\Gamma$ (being identified here with its image under the embedding)
			and a labeling $\underline{X}$.
			One defines a 
			\emph{category $\cat{C}\text{-}\Graphs(\Sigma)$
				of $\cat{C}$-labeled graph on $\Sigma$} as follows:
			\begin{itemize}
				\item
			Objects are $\cat{C}$-labeled graphs in $\Sigma$, as just defined.
			\item
			Morphisms are generated by
			 the replacements of graphs with corollas within disks (see Figure~\ref{figreplacedisk}):
			For any $(\Gamma,\underline{X})\in \cat{C}\text{-}\Graphs(\Sigma)$
			and any disk $D\subset \Sigma$ embedded in the interior of $\Sigma$ such that $\partial D$ intersects at least one of the legs of $\Gamma \subset \Sigma$, and only transversally without intersecting any vertex of $\Gamma$, one adds a morphism from $(\Gamma,\underline{X})$ to the embedded graph $\Gamma'$ that agrees with $\Gamma$ outside of $D$, but inside of $D$ is replaced with a corolla whose center is the center of $D\subset \Sigma$ and whose legs connect to the finitely many points in $\Gamma \cap \partial D$. 
			The morphisms are generated by these replacements within disks, subject to
			 the following relations:
			 \begin{itemize}
			 \item
			 Any replacement within a \emph{single} disk that is also an endomorphism, i.e.\ having the same domain and codomain, is the identity.
			 	\item  
			 Replacements within disjoint disks commute, which allows us to unambiguously define the replacement within a multidisk, i.e.\ a disjoint union of finitely many disks.
			 
			 \item Suppose that $D\subset D'$ are multidisks. Then the morphism
			 \begin{align}
			 	(\Gamma,\underline{X}) \ra{\text{replacement in $D$}}  (\Gamma',\underline{X'}) \ra{\text{replacement in $D'$}} 
			 	(\Gamma'',\underline{X''}) \ . 
			 	\end{align}
			 	and the morphism
			 	\begin{align}
			 		(\Gamma,\underline{X})  \ra{\text{replacement in $D'$}} 
			 		(\Gamma'',\underline{X''}) 
			 	\end{align}
			 	agree. It is always assumed that the relative position of the graphs and the disks is such that the replacement is allowed according to the above definition.
			 \end{itemize}
		\end{itemize}
	\end{definition}
	\begin{remark}
	It follows from the relations that any two disk replacements with the same domain and the same codomain differing only by rescaling of the disk
	 are identical.
	\end{remark}
\begin{figure}[h]
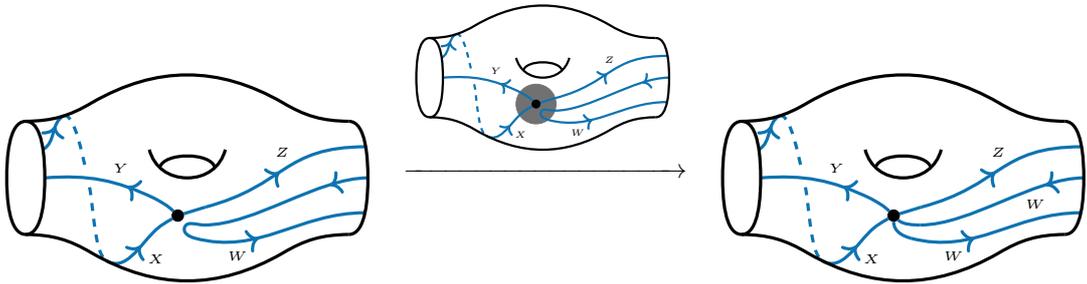

	\[
	\tikzfig{SNCL0}\quad\stackrel{\scalebox{0.7}{\tikzfig{SNCL1}}}{\xrightarrow{\hspace*{3.5cm}}}\quad\tikzfig{SNCL2}
	\]
	\caption{Here we illustrate a generating morphism in the category  $\cat{C}\text{-}\Graphs(\Sigma)$, where $\Sigma$ is a genus one	 surface with two boundary circles. Note that each edge label is a projective object in $\cat{C}$.}\label{figreplacedisk}
\end{figure}

	\begin{definition}[$\text{String-net spaces, following \cite[Section~3.2]{fsy-sn}}$]
		Let $\cat{C}$ be a pivotal finite tensor category. 
		For a surface $\Sigma$, a boundary label is a collection of points in $\partial \Sigma$
		such that we have
		 at least one point
		 per closed or open boundary component and none on the free boundary components; 
		 each of the boundary points is
		 labeled with a projective object in $\cat{C}$ and $+$ or $-$ encoding the orientation of edges ending at the point. We agree that $+$ corresponds to the outwards pointing orientation while $-$ corresponds to the inward pointing orientation. Each $(\Gamma,\underline{X})\in \cat{C}\text{-}\Graphs(\Sigma)$ defines such a boundary label by taking the intersection of $\partial \Sigma$ with $\Gamma$ and using the labels and orientations of the external legs of $\Gamma$. We denote this boundary label by $\partial (\Gamma,\underline{X})$.
		For a boundary label $B$ of $\Sigma$, we denote by $\cat{C}\text{-}\Graphs(\Sigma;B)$ the full subcategory of $\cat{C}\text{-}\Graphs(\Sigma)$ spanned by those $(\Gamma,\underline{X})$ with $\partial (\Gamma,\underline{X})=B$. 
		There is an evaluation functor
		\begin{align}\mathbb{E}^{\Sigma,B}_\cat{C} :  \cat{C}\text{-}\Graphs(\Sigma;B) \to\Vect \  \label{eqnevaluation}
		\end{align}to the category of $k$-vector spaces
		defined as follows:
		\begin{itemize}
			\item 
			If $(\Gamma,\underline{X}) \in  \cat{C}\text{-}\Graphs(\Sigma;B)$ is a corolla, then
			\begin{align} 
			\mathbb{E}^{\Sigma,B}_\cat{C}(\Gamma,\underline{X}):=\cat{C}(I,X_1^{\varepsilon_1}\otimes\dots \otimes X_n^{\varepsilon_n})\label{eqndefoncor}
			\end{align} 
	where $\varepsilon_i\in \{+,-\}$ is the sign encoding the orientation of the edges, and
	 $X_i^+:=X_i$, and $X_i^{-}:=X_i^\vee$ 
			if $\underline{X}$ consists of the objects $X_1,\dots,X_n \in \Proj\cat{C}$ in the cyclic order induced from the embedding into the oriented surface $\Sigma$. 
			(The definition~\eqref{eqndefoncor} suggests that not only a cyclic order is chosen, but an actual order.
			This however is done here only to be explicit. The definition can be made without this choice by using the fact that $\cat{C}$, being a pivotal tensor category, is a category-valued cyclic associative algebra~\cite[Section~4]{cyclic}.)

			\item If $(\Gamma,\underline{X}) \in  \cat{C}\text{-}\Graphs(\Sigma,B)$ is  not  corolla, we cut $(\Gamma,\underline{X})$ at all internal edges and obtain a finite disjoint union of corollas to each of which we assign the vector space~\eqref{eqndefoncor}.
			Then $\mathbb{E}^{\Sigma,B}_\cat{C}(\Gamma,\underline{X})$ is obtained by tensoring all these vector spaces together
			(formally, this is an unordered tensor product). 
			
			\item If $\Sigma$ is a disk 
			$D$ and $B=(B_1,\dots,B_\ell)$ a boundary label in which all $B_i$ carry the sign $+$, then the graphical calculus provides us with a map
			\begin{align}
				\mathbb{E}^{D,B}_\cat{C}(\Gamma,\underline{X}) \to \cat{C}(I,B_1 \otimes \dots \otimes B_\ell) = \mathbb{E}^{D,B}_\cat{C}(T_{\ell-1}, (B_1,\dots,B_\ell) ) \ , 
			\end{align}
			where $T_{\ell-1}$ is the corolla with $\ell$ legs. 
			This affords the definition of~\eqref{eqnevaluation} on morphisms.
		\end{itemize}
		We define the \emph{string-net space for the surface $\Sigma$ with coefficients in
			$\cat{C}$ and boundary label $B$} as the colimit
		\begin{align}
			\snc (\Sigma;B) := \colimsub{(\Gamma,\underline{X}) \in \cat{C}\text{-}\Graphs(\Sigma;B)}     \mathbb{E}^{\Sigma,B}_\cat{C}(\Gamma,\underline{X})   \label{eqncolimitstringnet}
		\end{align}
		Vectors in this vector space are called \emph{string-nets}.
	\end{definition}

\begin{remark} By definition an element of the vector space $\snc(\Sigma)$
	can be represented by a linear combination of \emph{fully labeled oriented graphs} $(\Gamma,\underline{X},\underline{f})$, where $\underline{f}\in\mathbb{E}^{\Sigma,B}_\cat{C}(\Gamma,\underline{X})$ is a choice of labels for the vertices (note that
	for the objects in $\cat{C}\text{-}\Graphs(\Sigma;B)$, no labels for the vertices are specified), by simply applying the structural map 
		\begin{align}
\mathbb{E}^{\Sigma,B}_\cat{C}(\Gamma,\underline{X})\to	\snc (\Sigma;B) := \colimsub{(\Gamma,\underline{X}) \in \cat{C}\text{-}\Graphs(\Sigma;B)}     \mathbb{E}^{\Sigma,B}_\cat{C}(\Gamma,\underline{X})\label{eqndefsncolimit}
	\end{align}
	Two of such fully labeled graphs 
	$(\Gamma,\underline{X},\underline{f})$ and $(\Omega,\underline{Y},\underline{g})$
	represent the same string-net precisely if
	$(\Gamma,\underline{X})$ and $(\Omega,\underline{Y})$ can be connected by a zigzag in 
	$\cat{C}\text{-}\Graphs(\Sigma;B)$ such that the evaluation of $\mathbb{E}^{\Sigma,B}_\cat{C}$ on this zigzag transforms $\underline{f}$ to $\underline{g}$.
	\end{remark}

\begin{remark}
	The colimit definition of string-net spaces is not the one used in~\cite{kirillovsn}, but it is equivalent when $\cat{C}$ is the spherical fusion category (which was the only case considered in \cite{kirillovsn}). 
	This is the content of \cite[Theorem~3.7]{fsy-sn}. 
	Hence, we use~\eqref{eqncolimitstringnet} already as a definition for reasons of conciseness.
	\end{remark}

	\begin{remark}\label{remdsn1}
		There is another motivation for the colimit definition of string-net spaces, see also \cite[Section~3.2]{fsy-sn}: The colimit in~\eqref{eqndefsncolimit}
		can `simply' be replaced by a homotopy colimit, thereby leading to a derived version of string-nets. This lies  beyond the scope of the present paper, see however Remark~\ref{remdsn2} for an additional comment.
		\end{remark}

	We conclude this section with some of the more obvious, but extremely crucial results and the constructions for string-nets that are analogous to those from~\cite{fsy-sn}, which are in turn inspired by \cite{Walker,kirillovsn}. 
	Because of
	 the modifications that we implement in this paper, these results do not  follow directly from~\cite{fsy-sn}.

We begin by calculating the string-net space of the disk, see Figure~\ref{snindisk}:
	
	\begin{lemma}\label{lemmasndisks}
		For any pivotal finite tensor category $\cat{C}$ and any disk $D$ with boundary label $B$ formed by projective labels $X_1, \dots, X_n$ cyclically ordered through the orientation of $D$, there is an isomorphism
		\begin{align}
			\snc (D;B) \cong \cat{C}(I, X_1 \otimes \dots \otimes X_n) \ . 
			\end{align}
		\end{lemma}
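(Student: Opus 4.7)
The plan is to show that the standard corolla $(T_{n-1}, B)$ with $n$ legs running to the $n$ boundary points of $D$ labeled by $X_1, \dots, X_n$ is a terminal object of the indexing category $\cat{C}\text{-}\Graphs(D;B)$. Since colimits over diagrams with a terminal object reduce to the value at that object, this forces $\snc(D;B) = \mathbb{E}^{D,B}_\cat{C}(T_{n-1}, B) = \cat{C}(I, X_1 \otimes \dots \otimes X_n)$ by definition of the evaluation functor on corollas.

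Concretely, I would first construct a morphism $(\Gamma,\underline{X}) \to (T_{n-1}, B)$ in $\cat{C}\text{-}\Graphs(D;B)$ for every object. Choose an embedded disk $D' \subset \operatorname{int}(D)$ whose interior contains all vertices and internal edges of $\Gamma$, and whose boundary transversally crosses each of the $n$ legs of $\Gamma$ exactly once, avoiding all vertices. The disk replacement inside $D'$ turns $\Gamma$ into a corolla in $D'$ with straight leg extensions running out to $\partial D$. A second disk replacement within a slightly larger disk $D'' \subset \operatorname{int}(D)$ containing $D'$ together with these leg extensions absorbs everything into a corolla centered in $D$ whose legs end precisely at the boundary points of $B$; this is, up to ambient isotopy, the object $(T_{n-1}, B)$.

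For uniqueness, I would use the two relations imposed on generating disk replacements. Any morphism in $\cat{C}\text{-}\Graphs(D;B)$ is a finite composite of disk replacements, and any two such composites with source $(\Gamma,\underline{X})$ and target $(T_{n-1}, B)$ can be enclosed in a single disk $\widetilde D \subset \operatorname{int}(D)$ containing all the smaller replacement disks involved. The nested-disk relation forces each composite to coincide with the single replacement within $\widetilde D$, and the rule that an endomorphic single-disk replacement is the identity rigidifies the remaining rescaling ambiguity. Hence the hom-set from any object to $(T_{n-1}, B)$ is a singleton, and the colimit in~\eqref{eqncolimitstringnet} is computed as the value at this terminal object.

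The main obstacle is purely bookkeeping: one must check carefully that the enlarging disks $D'$, $D''$, $\widetilde D$ can always be chosen so as to satisfy the transversality and non-intersection-with-vertices conditions built into Definition~\ref{defclabaledgraph}, and that the nested-disk relation applies in the form needed above. As an alternative that avoids arguing terminality directly, one can build mutually inverse maps: the graphical calculus gives a cocone $\mathbb{E}^{D,B}_\cat{C} \Rightarrow \cat{C}(I, X_1 \otimes \dots \otimes X_n)$ that is part of the definition of $\mathbb{E}^{D,B}_\cat{C}$ on morphisms and so descends to a map $\snc(D;B) \to \cat{C}(I, X_1 \otimes \dots \otimes X_n)$, while the structure map of the colimit at $(T_{n-1}, B)$ supplies an inverse; the two composites are identities by the same disk replacement argument that identifies each fully labeled graph with its graphical evaluation on a corolla.
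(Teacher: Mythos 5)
You take the same route as the paper: show that the radial corolla $(T_{n-1},B)$ is terminal in $\cat{C}\text{-}\Graphs(D;B)$, so the colimit in~\eqref{eqncolimitstringnet} collapses to its value there, which is $\cat{C}(I,X_1\otimes\dots\otimes X_n)$ by~\eqref{eqndefoncor}. The paper states the terminality as a one-liner, citing \cite[Example~3.4]{fsy-sn}; you try to spell it out.

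There is, however, a gap in your elaboration, and you misplace where the bookkeeping actually bites. Transversality and vertex-avoidance for the enlarging disks $D'$, $D''$, $\widetilde D$ are easy to arrange and are not the real concern. The genuine issue is that a disk replacement in $D'\subset\operatorname{int}(D)$ leaves the graph unchanged outside $D'$, in particular on a collar of $\partial D$; so the graph you land on after your two replacements agrees with $(\Gamma,\underline{X})$ near $\partial D$ and coincides with $(T_{n-1},B)$ only \emph{up to ambient isotopy}, which you yourself note. But isotopic embedded graphs are in general distinct objects of $\cat{C}\text{-}\Graphs(D;B)$, and no composite of interior disk replacements can reach the literal radial $(T_{n-1},B)$ starting from a $\Gamma$ whose legs are not already radial in a neighborhood of $\partial D$. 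So you have not actually produced a morphism to $(T_{n-1},B)$, and terminality as you state it does not follow. Your fallback argument via mutually inverse maps inherits exactly the same gap in the step showing that $\snc(D;B)\to\cat{C}(I,X_1\otimes\dots\otimes X_n)\to\snc(D;B)$ is the identity, since this again amounts to connecting an arbitrary $(\Gamma,\underline{X},\underline{f})$ to $(T_{n-1},B,\cdot)$ by a zigzag. The missing ingredient is a cofinality reduction: restrict to the full subcategory of graphs that already agree with $T_{n-1}$ on a fixed collar of $\partial D$, show it is cofinal, and then run your existence-and-uniqueness argument on that subcategory (where it does go through). This step is exactly what the paper is delegating to~\cite[Example~3.4]{fsy-sn}, and it is not a harmless rescaling or transversality matter.
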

		
		\begin{proof}
			The corolla consisting of one vertex in the middle of $D$ radially connected to the boundary labels is a terminal object in $\cat{C}\text{-}\Graphs(\Sigma;B)$, see also~\cite[Example~3.4]{fsy-sn}. This implies the assertion. 
			\end{proof}

\begin{figure}[h]
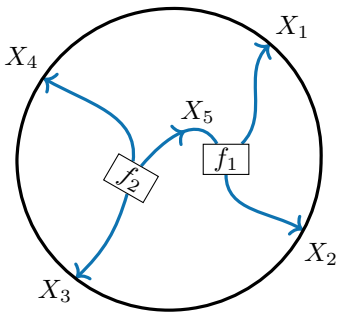

\[
\tikzfig{SND}
\]
\caption{Here we illustrate a typical string-net in $\snc (D;B) \cong \cat{C}(I, X_1 \otimes X_2 \otimes X_3 \otimes X_4)$. Our convention is that when the disk $D$ is oriented counterclockwise, the labels $X_1,\cdots,X_4\in\Proj\cat{C}$ are placed \emph{clockwise} on the boundary circle.}\label{snindisk}
\end{figure}

	Isotopies can be decomposed into isotopies supported in disks~\cite[Corollary~1.3]{edwardskirby}. As in \cite[Section~3.1]{fsy-sn},
	 this implies the isotopy invariance of string-nets:
 
\begin{proposition}[Isotopy invariance of string-nets]\label{propisotopy}
	For a pivotal finite tensor category $\cat{C}$, a surface $\Sigma$ and a boundary label $B$, consider
	a labeled graph $(\Gamma,\underline{X}) \in \cat{C}\text{-}\Graphs(\Sigma;B)$ and a labeling $\underline{f}\in\mathbb{E}^{\Sigma,B}_\cat{C}(\Gamma,\underline{X})$ of the vertices of $\Gamma$. If we change $\Gamma$ through an isotopy relative boundary to a different admissible graph $\widetilde \Gamma$
	 in $\Sigma$, then $\widetilde \Gamma$ inherits a labeling for its edges and vertices, again denoted by $\underline{X}$ and $\underline{f}$, respectively.
	 Then $(\Gamma,\underline{X},\underline{f})$ and $(\widetilde \Gamma,\underline{X},\underline{f})$ represent the same vector in $\snc(\Sigma;B)$.
\end{proposition}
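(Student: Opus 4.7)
The plan follows the outline sketched by the authors: reduce isotopy invariance to isotopies supported in disks via the Edwards--Kirby decomposition, then use the disk-replacement morphisms in $\cat{C}\text{-}\Graphs(\Sigma;B)$ together with the classical isotopy invariance of the pivotal graphical calculus inside a disk.

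By \cite[Corollary~1.3]{edwardskirby}, every ambient isotopy of $\Sigma$ relative boundary decomposes into a finite composition of isotopies each supported in an embedded disk, so by induction it suffices to handle an isotopy supported in a single disk $D \subset \Sigma$. In this case $\Gamma$ and $\widetilde{\Gamma}$ coincide as admissibly embedded labeled graphs outside $D$; in particular their boundary data agree with $B$. One now slightly enlarges $D$ to a disk $D' \subset \Sigma$ in the interior of $\Sigma$, chosen so that $\partial D'$ lies in the region where $\Gamma=\widetilde{\Gamma}$, avoids all vertices of both graphs, and meets them transversally. Such $D'$ is obtained by a routine transversality argument, and the admissibility conditions of Definition~\ref{defclabaledgraph} for performing the disk-replacement in $D'$ are then satisfied simultaneously for $(\Gamma,\underline{X})$ and $(\widetilde{\Gamma},\underline{X})$.

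Performing these two disk replacements produces morphisms
\begin{align}
(\Gamma,\underline{X}) \to (\Gamma',\underline{X}') \quad\text{and}\quad (\widetilde{\Gamma},\underline{X}) \to (\widetilde{\Gamma}',\underline{X}')
\end{align}
in $\cat{C}\text{-}\Graphs(\Sigma;B)$. Since $\Gamma$ and $\widetilde{\Gamma}$ agree outside $D'$, the intersection points on $\partial D'$ together with the labels and orientations of the edges meeting $\partial D'$ coincide; hence $(\Gamma',\underline{X}')=(\widetilde{\Gamma}',\underline{X}')$ as labeled embedded graphs, both consisting of the same corolla inside $D'$ attached to the common exterior graph. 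It remains to compare the two images of the vertex labeling $\underline{f}$ under $\mathbb{E}^{\Sigma,B}_\cat{C}$ applied to these two morphisms, that is, the two induced labels on the central vertex of the common inner corolla. By the construction of $\mathbb{E}^{\Sigma,B}_\cat{C}$ on morphisms, each of these labels is computed by applying the pivotal graphical calculus of $\cat{C}$ to $\Gamma\cap D'$, respectively $\widetilde{\Gamma}\cap D'$, which are two embedded labeled graphs inside the disk $D'$ sharing the same boundary data on $\partial D'$ and related by an isotopy relative to $\partial D'$. Isotopy invariance of the pivotal graphical calculus in a disk, a classical consequence of the monoidal and pivotal axioms, gives equality of the two induced vertex labels. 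Consequently $(\Gamma,\underline{X},\underline{f})$ and $(\widetilde{\Gamma},\underline{X},\underline{f})$ define the same element in the colimit~\eqref{eqncolimitstringnet}, proving the claim.

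The main potential obstacle is the very last step: one appeals to isotopy invariance of the graphical calculus of a pivotal tensor category inside a disk, which here functions as a black-box input; deriving it from scratch requires a fair amount of diagrammatic bookkeeping but is standard. The Edwards--Kirby reduction and the transversality placement of $\partial D'$ are routine by comparison.
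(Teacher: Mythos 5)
Your proof takes essentially the same approach as the paper, which reduces isotopy invariance to disk-supported isotopies via Edwards--Kirby and then invokes the disk-replacement morphisms together with isotopy invariance of the graphical calculus inside a disk (the paper cites \cite[Section~3.1]{fsy-sn} for exactly this step). Your write-up is correct; the only minor point left implicit is that the disk $D'$ must actually meet an edge of $\Gamma$ for the disk-replacement morphism to be defined, which is ensured by taking the Edwards--Kirby disks sufficiently small so that no disk swallows an entire component of the embedded graph (or, if it lies off $\Gamma$, the isotopy is vacuous).
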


  As a preparation for the definition of boundary categories, we recall two key 
   examples of a symmetric monoidal bicategory. For an introduction to symmetric monoidal bicategories, we refer to~\cite[Chapter~2]{schommerpries}. 
  
  \begin{definition}
  	We denote by $\Cat_k$ the bicategory 
  	\begin{itemize}
  		\item whose objects are  $k$-linear categories,
  		\item whose 1-morphisms are  $k$-linear functors,
  		\item and whose 2-morphisms are natural transformations.   
  	\end{itemize}
  	The na\" ive monoidal product $\otimes$ turns $\Cat_k$ into a symmetric monoidal bicategory. Concretely, 
  	the na\" ive monoidal product $\cat{C}\otimes \cat{D}$ of two $k$-linear categories $\cat{C}$ and $\cat{D}$ 
  	has as objects pairs $(c,d)$ with $c\in \cat{C}$ and $d\in \cat{D}$ and morphism spaces 
  	$(\cat{C}\otimes\cat{D})((c, d),(c', d'))= \cat{C} (c,c')\otimes \cat{D}(d,d')$. 
  	The monoidal unit is the $k$-linear category $1_k$ with one object whose endomorphism algebra is $k$.
  \end{definition}

	\begin{definition}\label{defbimod}
	We denote by $\Bimod$ the bicategory   
	\begin{itemize}
		\item whose objects are $k$-linear categories,
		\item whose 1-morphisms are bimodules (also called profunctors), i.e.\ a 1-morphism $\cat{C} \to \cat{D}$ is a $k$-linear functor
		$ \cat{C}\otimes \cat{D}^{\opp} \to \Vect$ ($\Vect$ is the category of $k$-vector spaces;
		composition is defined via coends),
		\item and whose 2-morphisms are natural transformations between bimodules.   
	\end{itemize}
	The na\" ive monoidal product defines the structure of a symmetric monoidal category on $\Bimod$. The monoidal unit is $1_k$.
\end{definition}

	\begin{definition}[$\text{Boundary categories, following \cite[Section~3.2]{fsy-sn}}$]\label{defboundarycategories}
		For a pivotal finite tensor category $\cat{C}$
		and a compact one-dimensional manifold $S$ with boundary, we denote by $\snc(S)$ the category whose objects are collections of finitely many points in the interior of $S$, at least one in each component of $S$, and each decorated with a projective object in $\cat{C}$. 
		For $B,C \in \snc(S)$, the morphism space is defined as
		\begin{align}
			\snc(S)(B,C) := \snc(S\times [0,1] ; B^\vee , C) \ ,
		\end{align}
	where $B^\vee$ is the dual boundary label obtained by replacing each label in $B$ by its dual. Note that we tacitly identify a $\cat{C}$-labeled line with its orientation reversal whose label is given by the dual object.
		This allows us to define composition via stacking cylinders.
		It is immediate from the definition of $\snc(S)$ that for any surface $\Sigma$
		with boundary $\partial \Sigma$ (union of the closed and open boundary components),
		 the spaces $\snc (\Sigma;-)$ assemble into a linear functor
		\begin{align}
			\snc(\Sigma) : \snc(\partial \Sigma)\to\Vect\ ,\label{eqnfunctorforsurface0}
		\end{align}
i.e.\  a 1-morphism in $\Bimod$ from $\snc(\partial \Sigma)$ to the monoidal unit $1_k$ of $\Bimod$. 
	\end{definition}

	\begin{remark}\label{remsncint}
		We 
		have a functor
		$\snc([0,1])\to\Proj\cat{C}$
		that sends $n$ points on the interval labeled with $X_1,\dots,X_n \in \Proj\cat{C}$
		to $X_1\otimes\dots\otimes X_n$. 
		Clearly, this functor is essentially surjective. The functor is also fully faithful
		and hence an equivalence because the morphism spaces in $\snc([0,1])$
		are exactly the hom spaces in $\Proj\cat{C}$ by Lemma~\ref{lemmasndisks}.
		\end{remark}

	\begin{remark}\label{remsncsymmon}
		The assignment $S \mapsto \snc(S)$ is symmetric monoidal with respect to the disjoint union of compact one-dimensional manifolds $S$ with boundary and the na\" ive monoidal product of linear categories.
	\end{remark}

For any surface $\Sigma$, the diffeomorphisms of $\Sigma$ preserving the boundary parametrizations form a topological group $\Diff(\Sigma)$ (the free boundary components are not parametrized and hence not preserved point-wise). Its group of path components $\Map(\Sigma):=\pi_0(\Diff(\Sigma))$ is the \emph{mapping class group} of $\Sigma$. 
Clearly, the diffeomorphism group of $\Sigma$ acts on the string-net space of $\Sigma$. 
Thanks to the isotopy invariance statement from Proposition~\ref{propisotopy}, the action descends to the mapping class group:
	
\begin{corollary}\label{cormcg}
For any pivotal finite tensor category $\cat{C}$ and any surface $\Sigma$, the linear functor
$\snc(\Sigma) : \snc(\partial \Sigma)\to\Vect$ carries a representation of the mapping class group $\Map(\Sigma)$ of $\Sigma$ through natural isomorphisms.
\end{corollary}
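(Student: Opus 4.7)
The plan is to first produce, for every diffeomorphism $\varphi$ preserving the boundary parametrizations, a natural automorphism of the functor $\snc(\Sigma)\colon \snc(\partial\Sigma)\to\Vect$, and then to argue that this assignment descends from $\Diff(\Sigma)$ to $\Map(\Sigma)$. The construction is almost tautological from the colimit definition, and the real content is the descent, which is where Proposition~\ref{propisotopy} does the work.

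First I would observe that $\varphi\in\Diff(\Sigma)$ acts on the indexing category $\cat{C}\text{-}\Graphs(\Sigma;B)$ by push-forward of embedded $\cat{C}$-labeled graphs. Since $\varphi$ fixes each closed boundary component and each open boundary interval pointwise, the image $\varphi_*(\Gamma,\underline{X})$ has boundary label again equal to $B$, and admissibility is preserved because $\varphi$ is an orientation-preserving diffeomorphism. On morphisms, a disk replacement in a disk $D\subset \Sigma$ is sent to the replacement in the disk $\varphi(D)$; the relations of Definition~\ref{defclabaledgraph} are preserved. Crucially, the evaluation functor $\mathbb{E}^{\Sigma,B}_\cat{C}$ is strictly invariant under this push-forward, because on a corolla it depends only on the edge labels, the signs, and the cyclic order induced at each vertex by the local orientation of $\Sigma$, and each of these data is preserved by $\varphi$. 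Hence push-forward is an endo-equivalence of the diagram $\mathbb{E}^{\Sigma,B}_\cat{C}$, inducing a linear automorphism of the colimit $\snc(\Sigma;B)$; the assignment $\varphi\mapsto \varphi_*$ is strictly functorial in $\varphi$.

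Next I would verify naturality in $B\in \snc(\partial\Sigma)$. A morphism $f\colon B\to B'$ is represented by a string-net in $\partial\Sigma\times [0,1]$; its action on $\snc(\Sigma;B)$ is implemented by gluing this cylinder to a collar of $\partial\Sigma$ in $\Sigma$, which yields a surface canonically diffeomorphic to $\Sigma$ with boundary label $B'$. Because $\varphi$ fixes $\partial\Sigma$ pointwise, we may isotope $\varphi$ to the identity on such a collar without changing its class in $\Map(\Sigma)$, and then extend by the identity across the glued cylinder. This makes the gluing $\varphi$-equivariant and yields the desired naturality square. Combining with the previous paragraph, we obtain a group homomorphism $\Diff(\Sigma)\to \Aut(\snc(\Sigma))$ into the natural automorphisms of the functor.

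Finally, to descend to $\Map(\Sigma)$, suppose $\varphi_0$ and $\varphi_1$ are isotopic through boundary-preserving diffeomorphisms $\{\varphi_t\}_{t\in[0,1]}$. For a representative $(\Gamma,\underline{X},\underline{f})$ of an element of $\snc(\Sigma;B)$, the family $(\varphi_t)_*(\Gamma,\underline{X},\underline{f})$ is an isotopy of admissible labeled graphs relative to the boundary with fixed combinatorial data. Proposition~\ref{propisotopy} then immediately gives $(\varphi_0)_*=(\varphi_1)_*$ on $\snc(\Sigma;B)$, so the action factors through $\pi_0(\Diff(\Sigma))=\Map(\Sigma)$. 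The main technical point is really this last step: the bookkeeping of the action on the diagram is routine, but the fact that isotopies can be decomposed into disk-supported pieces (via \cite{edwardskirby}, which already underlies Proposition~\ref{propisotopy}) is what ultimately ensures that the seemingly only set-theoretic $\Diff(\Sigma)$-action is in fact continuous in the appropriate sense and thus descends to the mapping class group.
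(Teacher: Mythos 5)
Your argument is correct and takes essentially the same route the paper does: the paper's "proof" is precisely the two sentences preceding the corollary, namely that $\Diff(\Sigma)$ acts on $\snc(\Sigma)$ by push-forward of labeled graphs and that this action factors through $\Map(\Sigma)$ because of the isotopy-invariance statement of Proposition~\ref{propisotopy}. You have simply unpacked the same two steps carefully — the push-forward auto-equivalence of $\cat{C}\text{-}\Graphs(\Sigma;B)$ strictly commuting with $\mathbb{E}^{\Sigma,B}_\cat{C}$, naturality in $B$ via collar gluing, and descent via Proposition~\ref{propisotopy} — and each of these checks is right. One small caveat: your closing remark attributes the descent to a "continuity" phenomenon enabled by the disk decomposition of isotopies; this framing is a bit off. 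Continuity of the $\Diff(\Sigma)$-action is not the mechanism here; what is needed (and what you already correctly invoke in the preceding sentence) is plain isotopy invariance of string-nets, which is Proposition~\ref{propisotopy} applied verbatim. The reference to \cite{edwardskirby} is relevant only as the ingredient inside the proof of Proposition~\ref{propisotopy}, not as a separate input at this stage.
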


The definition of boundary categories is compatible with 
the orientation reversal
of manifolds, see also \cite[eq.~(3.35)]{fsy-sn}:

\begin{lemma}\label{Lemma: or rev}
	Let $S$ be an oriented 1-dimensional manifold and $\bar{S}$ the same manifold with reversed 
	orientation. There is a canonical equivalence
	\begin{align}
		\snc(S)^\opp \simeq \snc(\bar{S}) \ \ . 
	\end{align}
\end{lemma}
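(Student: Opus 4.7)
The plan is to construct the equivalence directly, using orientation-reversal of the cylinder $S \times [0,1]$. On objects, I would define $\Phi \colon \snc(S)^\opp \to \snc(\bar S)$ by sending a decorated point configuration $B$ in $S$ to the same underlying set of points in $\bar S$; by the tacit identification from Definition~\ref{defboundarycategories}, which exchanges a label with its dual when the orientation of a labeled line is reversed, this assignment is well-defined.

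On morphisms, an arrow $B \to C$ in $\snc(S)^\opp$ is an element of $\snc(S)(C,B) = \snc(S \times [0,1]; C^\vee, B)$. The map $\sigma \colon S \times [0,1] \to \bar S \times [0,1]$, $(p,t) \mapsto (p, 1-t)$, is an orientation-preserving diffeomorphism that swaps the two ends of the cylinder. Pushing the string-net forward along $\sigma$ and applying the duality convention for boundary labels yields an element of $\snc(\bar S \times [0,1]; B^\vee, C) = \snc(\bar S)(B,C)$. Functoriality follows because $\sigma$ reverses the stacking order of cylinders: the opposite of composition in $\snc(S)$ is sent to ordinary stacking in $\bar S \times [0,1]$. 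Isotopy invariance from Proposition~\ref{propisotopy} handles the reparametrizations needed to identify the relevant cylinders on the nose.

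To finish, I would construct $\Psi \colon \snc(\bar S) \to \snc(S)^\opp$ by repeating the recipe with $S$ replaced by $\bar S$ (using $\bar{\bar S} = S$), and verify $\Psi \circ \Phi \simeq \id$ and $\Phi \circ \Psi \simeq \id$ from the fact that $\sigma^2$ is isotopic to $\id_{S \times [0,1]}$, together with Proposition~\ref{propisotopy}. The main obstacle is bookkeeping: one must carefully track how the $(-)^\vee$ appearing in the definition of morphism spaces of boundary categories interacts with the orientation reversal at each end of the cylinder. This is exactly the content of the identification mentioned in Definition~\ref{defboundarycategories}, so conceptually the result is forced by the definitions; the only real work is checking that the identifications assemble coherently into a functor and that naturality of the witnessing isomorphism holds under diffeomorphisms of $S$.
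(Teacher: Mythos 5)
Your proof matches the paper's: both send objects to the configuration with dualized labels on $\bar S$ and send morphisms to the pushforward of a string-net in $S\times[0,1]$ along the flip $(s,t)\mapsto(s,1-t)$, invoking isotopy invariance to handle the bookkeeping. Your write-up is more elaborate about orientations, functoriality, and the inverse functor, but the construction and key idea are the same.
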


\begin{proof}
The equivalence sends an object $(S,P_1,\dots, P_n)$ to $(\bar{S},P_1^\vee,\dots, P_n^\vee)$
 and a morphism, i.e.\ an equivalence class of string-nets in $ S \times [0,1] $, to the same string-net composed with the diffeomorphism $S\times [0,1]\to S\times [0,1], \ (s,t) \mapsto (s, 1-t)$.  
\end{proof}

As a direct consequence of Lemma~\ref{Lemma: or rev}, we note that for a cobordism $\Sigma \colon S_1 \to S_2$ the linear functor $\snc(\Sigma)\colon \snc({S_1}) \otimes \snc (\overline{S_2} ) \to \Vect $ can also be interpreted as a morphism $\snc(\Sigma)\colon \snc (S_1 ) \to \snc (S_2 ) $ in $\Bimod$.

	\begin{proposition}[Additivity for string-nets]\label{propadd} Let $\cat{C}$ be a pivotal finite tensor category and let $(\Gamma,\underline{X}, \underline{f})$ be a string-net
	in a surface $\Sigma$ with boundary label $B$ such that a fixed internal edge $e$ is labeled by the direct sum $P\oplus Q$ of two projective objects. Denote by $(\Gamma,\underline{X}_P, \underline{f}_P)$ and $(\Gamma,\underline{X}_{Q	}, \underline{f}_{Q})$ the restricted string-nets, i.e.\
	\begin{itemize}
		\item
		the label $P\oplus Q$ of $e$ is replaced with $P$ or $Q$, respectively, \item and the morphism at the start and end of the edge $e$ is
		restricted to the object $P$ or $Q$, respectively.
	\end{itemize}
	Then we have \begin{align}(\Gamma,\underline{X}, \underline{f})= (\Gamma,\underline{X}_{P}, \underline{f}_{P})+(\Gamma,\underline{X}_{Q}, \underline{f}_{Q})\end{align} as elements of $\snc(\Sigma;B)$.
\end{proposition}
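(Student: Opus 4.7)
The plan is to use the colimit definition of the string-net space to reduce the statement to a linear identity at the level of the evaluation functor $\mathbb{E}^{\Sigma,B}_\cat{C}$. Concretely, I would choose a disk $D \subset \Sigma$ whose interior contains the edge $e$ and both endpoints $v, v'$, chosen so that $\partial D$ crosses $\Gamma$ transversally only at the edges incident to $v$ or $v'$ other than $e$ (and possibly at some external legs near $\partial\Sigma$). Provided $v$ or $v'$ has valence at least two, such a disk exists and gives a legal disk replacement in the sense of Definition~\ref{defclabaledgraph}.

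The disk replacement produces a single corolla inside $D$ with a central vertex $u$ whose legs correspond to the finitely many points of $\Gamma \cap \partial D$. Under $\mathbb{E}^{\Sigma,B}_\cat{C}$, the labelling $\underline f$ is sent to a labelling whose value at $u$ is the graphical-calculus evaluation inside $D$ of $\underline{f}_v$ and $\underline{f}_{v'}$ contracted along the $(P\oplus Q)$-labelled edge $e$. The very same replacement applies to the restricted data $(\Gamma,\underline{X}_P,\underline{f}_P)$ and $(\Gamma,\underline{X}_Q,\underline{f}_Q)$: since $D$ only touches the $e$-slot at $v$ and $v'$ and no other edge label changes, the resulting embedded graph is the same, and the label at $u$ is obtained by the identical recipe but with $e$ labelled by $P$ or $Q$. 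By functoriality of the colimit each of the three sides of the proposed equality is identified with its image after the replacement.

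At this point the problem is purely algebraic. Writing $\underline{f}_v = \iota_P \underline{f}_{v,P} + \iota_Q \underline{f}_{v,Q}$ and similarly at $v'$ via the direct sum splitting in the $e$-slot, and using $\pi_P \iota_P = \id_P$, $\pi_Q \iota_Q = \id_Q$, and $\pi_P \iota_Q = 0 = \pi_Q \iota_P$, the contraction along the $(P\oplus Q)$-edge decomposes as the sum of the contraction along a $P$-edge and the contraction along a $Q$-edge, with all mixed terms vanishing inside $\cat{C}$. Hence the new label at $u$ is the sum of the two labels obtained from the restricted data, and linearity of the colimit yields the claimed identity in $\snc(\Sigma;B)$.

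The main obstacle is the borderline case in which $\Gamma=\{v,e,v'\}$ is the entire graph with both vertices univalent on a closed surface, where no admissible disk contains all of $e$. I would handle this by a peanut-shaped disk $D$ containing $v$ and $v'$ with $\partial D$ crossing the middle of $e$ in two points, producing after replacement a one-vertex graph with a self-loop labelled $P\oplus Q$ and label $\underline{f}_v \otimes \underline{f}_{v'}$. The same four-term expansion now contains algebraically non-zero mixed summands, which however vanish as string-nets: inserting a further vertex $w$ on the self-loop via a small disk replacement with $w$ labelled by $\id_{P\oplus Q} = \iota_P\pi_P + \iota_Q\pi_Q$ and contracting $w$ into the remaining vertex shows, via $\pi_\alpha\iota_\beta = \delta_{\alpha\beta}\,\id$, that the off-diagonal contributions map to zero already at the level of $\mathbb{E}$. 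This univalent-on-closed-surface edge case is the only genuinely subtle step; the rest is a routine consequence of the colimit definition and the splitting of $P\oplus Q$.
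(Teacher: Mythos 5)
Your argument for the generic case is essentially the paper's: evaluate inside a disk $D$ containing all of $e$ and meeting $\Gamma$ transversally at least once on its boundary, then use $\id_{P\oplus Q}=\iota_P\pi_P+\iota_Q\pi_Q$ to split the evaluation at the level of $\mathbb{E}^{\Sigma,B}_\cat{C}$, exactly as the paper does through its coequalizer presentation of the colimit. (Note that the paper lets $D$ cross \emph{any} part of $\Gamma$, not only edges incident to $v$ or $v'$, which slightly enlarges the scope of the generic argument beyond what you state.) You are also right to flag the borderline case in which $\Gamma$ consists precisely of $e$ and its two univalent endpoints, or a single vertex with a self-loop, on a closed surface, where no disk containing all of $e$ meets $\Gamma$ transversally; the paper's proof silently assumes such a disk exists.

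However, your treatment of that borderline case does not go through as stated. After the peanut-disk replacement you have a single vertex $u$ on a self-loop labeled $P\oplus Q$. Inserting a bivalent $w$ labeled $\id_{P\oplus Q}$ on the loop and then contracting $w$ back into $u$ via another disk replacement amounts, at the level of $\mathbb{E}$, to composing the label at $u$ with the identity along one of the two arcs of the loop: this reproduces the same label at the new vertex, so the off-diagonal summands of $\underline{f}_v\otimes\underline{f}_{v'}$ do not map to zero under this two-step move; there is no mechanism in it that triggers $\pi_\alpha\iota_\beta=\delta_{\alpha\beta}\,\id$ across the loop. A correct way to handle the borderline case is to insert a bivalent dummy vertex $w$ directly on $e$ \emph{before} any peanut move, splitting $e$ into $e_1,e_2$ (both labeled $P\oplus Q$, with $w$ carrying the evaluation of the strand, i.e.\ the identity). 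The resulting graph has three vertices, so the generic version of the proposition already applies; running it first for $e_1$ and then for $e_2$, the two mixed summands carry $w$-label $\pi_Q\iota_P=0$ resp.\ $\pi_P\iota_Q=0$ and hence vanish, while the two diagonal summands contract $w$ back via a further disk replacement to the restricted string-nets $(\Gamma,\underline{X}_P,\underline{f}_P)$ and $(\Gamma,\underline{X}_Q,\underline{f}_Q)$, yielding the claimed identity.
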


\begin{proof}
	The colimit defining the string-net space $\snc(\Sigma;B)$ from~\eqref{eqncolimitstringnet} can be written as the coequalizer of
	\begin{equation}
	\begin{tikzcd}
	\displaystyle	\bigoplus_{\substack{\text{morphisms} \\ g : (\Omega,\underline{Y})\to (\Omega',\underline{Y'}) \\ \text{in} \ \cat{C}\text{-}\Graphs(\Sigma;B)}} \mathbb{E}^{\Sigma,B}_\cat{C}(\Omega,\underline{Y}) \ar[r, shift left=2,"\id"] \ar[r, swap,shift right=2,"E"]
	& \displaystyle \bigoplus_{(\Omega,\underline{Y})\in \cat{C}\text{-}\Graphs(\Sigma;B)} \mathbb{E}^{\Sigma,B}_\cat{C}(\Omega,\underline{Y}) \ ,   
	\end{tikzcd}
	\end{equation}  
	where \begin{itemize}\item $\id$ projects the morphism used for indexing to its source object and is summand-wise the identity
		(therefore we denote this map by $\id$, even though it is a slight abuse of notation), \item and $E$ (as in `evaluation')
		on the summand  $\mathbb{E}^{\Sigma,B}_\cat{C}(\Omega,\underline{Y})$ indexed by
		$g : (\Omega,\underline{Y})\to (\Omega',\underline{Y'})$
		maps to $\mathbb{E}^{\Sigma,B}_\cat{C}(\Omega',\underline{Y'})$ indexed by $(\Omega',\underline{Y'})$ via
		$\mathbb{E}^{\Sigma,B}_\cat{C}(g):           \mathbb{E}^{\Sigma,B}_\cat{C}(\Omega,\underline{Y})\to
		\mathbb{E}^{\Sigma,B}_\cat{C}(\Omega',\underline{Y'})$. \end{itemize}
	We now choose a disk $D\subset \Sigma$ which completely contains $e$ and whose boundary 
	intersects $\Gamma$ transversely in
	at  least one point.
	The replacement on $D$ gives us morphisms
	\begin{align}
	a : (\Gamma,\underline{X}) &\to ( \Gamma',\underline{X'}) \ , \\
	b : (\Gamma,\underline{X}_P) &\to ( \Gamma',\underline{ X'}_P) \ , \\  
	c : (\Gamma,\underline{X}_Q) &\to ( \Gamma',\underline{ X'}_Q) \ . 
	\end{align}
	With the equality
	$\id_{P\oplus Q}= \iota_P \circ \pi_{P} + \iota_{Q} \circ \pi_{Q} $
	of morphisms in $\cat{C}$ ($\pi$ denotes the projection and $\iota$ the embedding),
	we arrive at
	\begin{align}
	E \left(   a,(\Gamma,\underline{X},\underline{f})     \right)=
	E \left(   b,(\Gamma,\underline{X}_P,\underline{f}_P)     \right)
	+
	E \left(   c,(\Gamma,\underline{X}_Q,\underline{f}_Q)     \right) \ . 
	\end{align} 
	As vectors in $\snc(\Sigma;B)$,
	we have moreover
	\begin{align} (\Gamma,\underline{X}, \underline{f}) &= E \left(   a,(\Gamma,\underline{X},\underline{f})     \right) \ , \\
	(\Gamma,\underline{X}_P,\underline{f}_P) &= E(\Gamma,\underline{X}_P,\underline{f}_P) \ , \\
	(\Gamma,\underline{X}_Q,\underline{f}_Q) &= E(\Gamma,\underline{X}_Q,\underline{f}_Q) 
	\end{align}
	(thanks to the coequalizer),
	and 
	therefore
	\begin{align}
	(\Gamma,\underline{X}, \underline{f}) &= E \left(   a,(\Gamma,\underline{X},\underline{f})     \right)\\&=E \left(   b,(\Gamma,\underline{X}_P,\underline{f}_P)     \right)
	+
	E \left(   c,(\Gamma,\underline{X}_Q,\underline{f}_Q)     \right)\\&=(\Gamma,\underline{X}_P,\underline{f}_P)+(\Gamma,\underline{X}_Q,\underline{f}_Q) \ . 
	\end{align} 
\end{proof}

\begin{remark}[Relation to admissible skein modules in dimension two]\label{relasm}
	The string-net construction used in this paper follows to a large extent
	the general bicategorical 
	string-net construction~\cite{fsy-sn}. It seems 
	likely that one could also use the \emph{admissible skein modules} 
	of Costantino-Geer-Patureau-Mirand~\cite{asm} 
	(the fact that the two-dimensional version of the admissible skein modules is a string-net construction is also mentioned and used there). More precisely, for a pivotal finite tensor category $\cat{C}$,
	the string-net construction $\snc$ used in this paper will 
	agree with the two-dimensional admissible skein module
	construction applied to the tensor ideal $\Proj \cat{C}$. 
	We will not expand further on this comparison as it would not --- to the best of our knowledge --- simplify the proof of our main result (with the exception of the finite-dimensionality statement in Corollary~\ref{corsnfinite} below that is known for admissible skein modules).
	We can, however, use our results to infer results about admissible skein modules, see Remark~\ref{remasm2}. 
	\end{remark}

As just mentioned, one could use the comparison to admissible skein modules to derive from 
\cite[Section~5.4]{asm} that the vector spaces $\snc(\Sigma;B)$ for any surface $\Sigma$ and any boundary label $B$ are always finite-dimensional. 
For completeness, we prove the result here as a consequence of the additivity statement from Proposition~\ref{propadd}:

	\begin{corollary}\label{corsnfinite}	
		Let $\cat{C}$ be a pivotal finite tensor category and $\Sigma$ a 
		surface with boundary label $B$.
		Then the string-net space $\snc(\Sigma;B)$ is finite-dimensional. 
	\end{corollary}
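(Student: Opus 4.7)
The plan is to exhibit a finite-dimensional spanning set for $\snc(\Sigma;B)$ by choosing a finite cell decomposition of $\Sigma$, reducing every string-net to a normal form adapted to it, and invoking Proposition~\ref{propadd} together with the two finiteness features of the finite tensor category $\cat{C}$: finitely many indecomposable projectives up to isomorphism, and finite-dimensional morphism spaces.

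First, I would fix a finite system $\Delta$ of disjoint embedded arcs and circles in $\Sigma$ whose complement is a disjoint union of open disks $D_1,\ldots,D_N$, with the arc endpoints on $\partial\Sigma$ chosen disjoint from the boundary points of $B$. Given any representative $(\Gamma,\underline{X},\underline{f})$ of a string-net, isotopy invariance (Proposition~\ref{propisotopy}) lets me arrange $\Gamma$ to be transverse to $\Delta$, and then a disk replacement performed inside each $D_i$ brings the graph into a normal form in which $\Gamma\cap D_i$ is a corolla for each $i$, connected to finitely many points on the curves of $\Delta$ and to the boundary label $B$. I would then invoke Proposition~\ref{propadd} to decompose every internal edge label as a direct sum of indecomposable projectives, so that without loss of generality every edge of $\Gamma$ carries an indecomposable projective label.

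The remaining task is to control the number of edges of $\Gamma$ crossing each curve in $\Delta$. Here I would exploit that projective objects form a tensor ideal: a cluster of several parallel edges crossing a cutting curve can be combined, through a disk replacement in a tubular neighborhood of that curve together with a further application of Proposition~\ref{propadd}, since the tensor product of the involved labels is again projective and hence decomposes into a finite direct sum of indecomposable summands. Iterating this fusion step leaves only finitely many combinatorial types: for each curve in $\Delta$ a bounded number of indecomposable projective labels chosen from a finite set, and for each disk $D_i$ a vertex label in a Hom space of the form $\cat{C}(I,Y_1\otimes\cdots\otimes Y_{m_i})$, which is finite-dimensional by the definition of a finite linear category. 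Summing the finitely many contributions yields finite-dimensionality of $\snc(\Sigma;B)$.

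I expect the main obstacle to be the fusion step. The generating morphisms in $\cat{C}\text{-}\Graphs(\Sigma;B)$ are directed from arbitrary graphs to corollas in disks, so relating a configuration with many parallel strands crossing a curve to one with a controlled number of strands requires a careful zigzag combining disk replacement, isotopy, and the additivity of Proposition~\ref{propadd}. Making this precise amounts to a delicate local analysis in a tubular neighborhood of each cutting curve in $\Delta$, where the interplay between the disk-replacement generators and the tensor-ideal structure of $\Proj\cat{C}$ must be carefully deployed; once this local step is settled, the remainder of the argument is a finite counting argument over combinatorial types.
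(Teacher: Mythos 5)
Your proposal is correct and follows a genuinely different route from the paper's. The paper fixes a ribbon graph spine $\Gamma\subset\Sigma$ onto which every string-net is pushed, making its underlying graph a subgraph of $\Gamma$, and then uniformizes every internal edge label to a projective generator $A$ via a split idempotent $\pi\circ\iota=\id_P$ (for suitable $\iota\colon P\to A^{\oplus m}$, $\pi\colon A^{\oplus m}\to P$) followed by Proposition~\ref{propadd}; finite-dimensionality is then immediate since subgraphs of $\Gamma$ form a finite set and vertex labels lie in fixed finite-dimensional morphism spaces. Your route is dual: cut $\Sigma$ along a cutting system $\Delta$ into disks, normalize to one corolla per disk, fuse the strands crossing each cutting curve into a single strand labeled by their tensor product (which is again projective, since $\Proj\cat{C}$ is a tensor ideal), and decompose into indecomposables via Proposition~\ref{propadd}. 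The paper's projective-generator trick makes the label control a single clean maneuver and avoids any bookkeeping of indecomposable summands of tensor products; your cell decomposition is more elementary and closer in spirit to a pants-decomposition argument, at the cost of the explicit fusion step. That step does go through --- near each cutting curve insert two auxiliary vertices carrying (co)evaluations of the tensor product, and compare the disk replacements of the original and fused configurations in a common enclosing disk, which yield the same corolla label --- and, for what it is worth, the paper's ``merging internal edges/vertices whenever necessary'' conceals an exactly analogous local collapse of parallel strands running along spine edges, so neither presentation spells this step out in full.
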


	\begin{proof}
We fix a graph $\Gamma$ embedded in $\Sigma$ with at least one edge and one vertex such that its intersection with the boundary is exactly given by
the marked points prescribed by the boundary value $B$ and such that $\Sigma$ (minus one point in case the boundary is empty) admits a deformation retract onto $\Gamma$. 
To obtain such a graph, we
use a ribbon graph description for $\Sigma$ or, in the case that $\Sigma$ is closed, $\Sigma$ with an additional free boundary component, see e.g.~\cite{costellographs} for the ribbon graph description of the moduli space of surfaces.

 Using the deformation retract and merging internal edges/vertices whenever necessary, we can transform every string-net drawn on $\Sigma$ such that its underlying graph becomes a subgraph of $\Gamma$. 

Since $\cat{C}$ is finite by assumption, we can choose a projective generator $A\in \cat{C}$.
For every projective object $P$ there exist an $m\ge 0$ and maps $\iota \colon P\to A^{\oplus m }$ and $\pi\colon A^{\oplus m }\to P $ such that $\pi \circ \iota=\id_p$. Let $(\Gamma,\underline{X},\underline{f})$ be a string-net with an edge labeled by $P$ representing an equivalence class in the string-net space. Then there is another string-net 
in which the same edge is labeled by $A^{\oplus m }$, which is constructed by `inserting the relation $\pi \circ \iota=\id_P$ on the edge'; the precise description is given in~Figure~\ref{figfinite}.  
\begin{figure}[h]
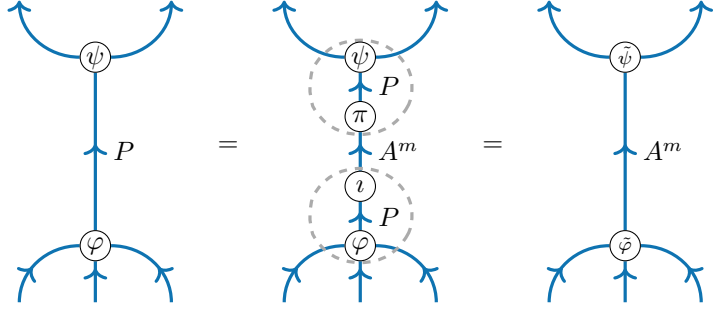

\[
\tikzfig{FD0}\quad = \quad \tikzfig{FD1} \quad = \quad \tikzfig{FD2} 
\]
\caption{On the very left, we see a string-net with an edge labeled by a projective object $P$. Since $\pi \circ \iota = \id_P$, the string-net in the middle represents the same vector in the string-net space. In the last step, we perform an evaluation on the disks bounded by the dashed circles. This produces a 
	 representative for the string-net in which the edge initially labeled by $P$ is now labeled by $A^{\oplus m}$. 
}
\label{figfinite}
\end{figure}

This combined with Proposition~\ref{propadd} together implies
 that every vector
  in $\snc(\Sigma,B)$ can be represented by a linear combination of string-nets
   whose internal edges are all labeled by $A$ (the labels of the legs are fixed by the prescribed boundary value $B$).
		
Since the underlying graph of every string-net appearing in these linear combinations
 is a subgraph of $\Gamma$ (note that the subgraphs of $\Gamma$ form a finite set) and the space of labels for each vertex is isomorphic to a morphism space of $\cat{C}$ and therefore finite-dimensional, the string-net space $\snc(\Sigma,B)$ itself must be the quotient of a finite direct sum of finite tensor products of finite-dimensional vector spaces, hence finite-dimensional.
	\end{proof}

	\section{The open-closed string-net modular functor for a pivotal finite tensor category\label{secopclmf}}
	In this section we will prove  
	 that the non-semisimple string-net construction with projective boundary labels
	 as given in Section~\ref{secstringnet} produces an \emph{open-closed modular functor};
	 an informal reminder on the definition of a modular functor
	  plus precise references will be given below.
	 Large parts of this 
	 statement
	 will follow from the construction or are a straightforward adaption of
	 	\cite[Theorem~3.27]{fsy-sn}, with the exception being that the needed gluing result (`excision') does strictly speaking not follow from the existing gluing results as they appear in \cite{Walker} (see also \cite{cooke} and \cite[Section~2.1-2.3]{skeinfin})
	 	that use skein-theoretic constructions involving \emph{all} objects.
	 	After a free finite cocompletion (as we will discuss it in Section~\ref{freecomcompletion}), this would not lead to the desired results: it would  have the consequence that the category associated to the circle becomes semisimple.
	 
	 Let us now state the needed excision results for string-nets:
	
	\begin{lemma}[Excision for disks that are glued along an interval]\label{cyclicyonedalemma}
		Let $\cat{C}$ be a pivotal finite tensor category.
		The gluing of two disks $D$ and $D'$ along a boundary interval $I$
		 induces an isomorphism
		\begin{align}
			\int^{P\in\Proj\cat{C}} \snc(D;X,P)\otimes\snc(D';P^\vee,X') \ra{\cong} \snc(D\cup_I D'	;X,X') \ . 
		\end{align}
		Here the respective projective boundary labels $(X,P)$ and $(P^\vee,X')$ for $D$ and $D'$ are written such that $P$ and $P^\vee$ are exactly the part of the boundary label lying on $I$.
		It is required that the label $(X,X')$ has at least length one.
	\end{lemma}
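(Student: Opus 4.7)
The idea is to reduce both sides of the claim to morphism spaces in $\cat{C}$ using Lemma~\ref{lemmasndisks}, translate the topological gluing into an algebraic contraction along the duality pairing $\ev_P : P \otimes P^\vee \to I$, and then invoke the co-Yoneda density formula inside the tensor ideal $\Proj \cat{C}$.

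Since $D$, $D'$ and $D \cup_I D'$ are all disks, Lemma~\ref{lemmasndisks} supplies isomorphisms
\begin{align}
\snc(D;X,P) &\cong \cat{C}(I,X\otimes P), \\
\snc(D';P^\vee,X') &\cong \cat{C}(I,P^\vee\otimes X'), \\
\snc(D\cup_I D';X,X') &\cong \cat{C}(I,X\otimes X').
\end{align}
The first step I would carry out is to verify that, under these identifications, the topological gluing map corresponds to the algebraic contraction
\begin{align}
\tilde\alpha \otimes \tilde\beta \longmapsto (\id_X \otimes \ev_P \otimes \id_{X'})\circ (\tilde\alpha \otimes \tilde\beta).
\end{align}
I would establish this by representing each disk string-net by the terminal radial corolla appearing in the proof of Lemma~\ref{lemmasndisks} and by observing that concatenating the $P$- and $P^\vee$-labeled legs at the shared interval $I$ produces, after a disk replacement around the newly created internal vertex, exactly an $\ev_P$ in the graphical calculus of $\cat{C}$.

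Using the pivotal duality adjunctions $\cat{C}(I, X \otimes P) \cong \cat{C}(P^\vee, X)$ and $\cat{C}(I, P^\vee \otimes X') \cong \cat{C}(P, X')$, the coend on the left-hand side rewrites as $\int^{P \in \Proj \cat{C}} \cat{C}(P^\vee, X) \otimes \cat{C}(P, X')$, and the contraction map turns into the co-Yoneda pairing for the covariant functor $F(P) := \cat{C}(P^\vee, X)$ on $\Proj \cat{C}$, evaluated at $X'$. The main obstacle is that the coend ranges only over the subcategory $\Proj \cat{C}$, not over all of $\cat{C}$, so the classical density formula does not apply verbatim. This is resolved by the length-at-least-one hypothesis, which forces at least one of the tuples $X$ or $X'$ to be non-empty: since $\Proj \cat{C}$ is a two-sided tensor ideal in the finite tensor category $\cat{C}$, the corresponding tensor product is projective and hence already an object of $\Proj \cat{C}$. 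Assuming $X' \in \Proj \cat{C}$, the case $X \in \Proj \cat{C}$ being symmetric after exchanging the roles of $D$ and $D'$, the density formula applied to $F$ internally to $\Proj \cat{C}$ at the object $X'$ yields
\begin{align}
\int^{P \in \Proj \cat{C}} \cat{C}(P, X') \otimes F(P) \;\cong\; F(X') \;=\; \cat{C}(X'^\vee, X) \;\cong\; \cat{C}(I, X \otimes X').
\end{align}
Unwinding the adjunctions shows that this composite isomorphism agrees with the contraction map described above, and naturality in $(X, X')$ follows from the naturality in Lemma~\ref{lemmasndisks}.
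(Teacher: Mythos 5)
Your proof is correct and follows essentially the same route as the paper's: identify the disk string-net spaces with hom-spaces via Lemma~\ref{lemmasndisks}, transpose by duality so that the representing variable lives in $\Proj\cat{C}$ (using the length-at-least-one hypothesis to guarantee the fixed object $X$ or $X'$, after collapsing the tuple, is projective), and conclude by the co-Yoneda density formula over $\Proj\cat{C}$. The paper is more terse (it just invokes the WLOG reduction to a single label $X$ and co-Yoneda), while you additionally spell out that the gluing map becomes the evaluation contraction; but both arguments hinge on the same identifications and the same density lemma.
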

	
	\begin{proof}
		Without loss of generality, we can assume that the label $X$ has length one.
		Then
		\begin{align}\snc(D;X,P)\cong \cat{C}(I,X\otimes P)\cong \cat{C}(X^\vee ,P) 
		\end{align}
		by Lemma~\ref{lemmasndisks}.
		We can understand $\cat{C}(X^\vee ,P)$ as the morphism space between $X^\vee$ and $P$ \emph{in $\Proj\cat{C}$}
		because both objects are projective.
		Now the statement follows from the (co-)Yoneda Lemma, see e.g.~\cite[Example~1.4.6]{riehl}.
	\end{proof}

	\begin{theorem}[Excision]\label{thmexcision}
		Let $\cat{C}$ be a pivotal finite tensor category.
		The gluing of a surface $\Sigma$ along a pair of 
		boundaries of shape $S$ (here $S$ is a finite disjoint union of boundary intervals and boundary circles) induces an isomorphism
		\begin{align}
			\int^{P \in \snc(S)} \snc(\Sigma;X,P,P^\vee)\ra{\cong} \snc(\Sigma';X) \quad \text{for}\quad X \in \snc(\partial \Sigma') \ , \label{eqnexcision} 
		\end{align}
		where $\Sigma'$ is the surface obtained by gluing and the boundary label $(X,P,P^\vee)$ is partitioned such that $P$ lives on the first copy $S$, $P^\vee$ on the second copy of $S$
		 and $X$ on the remaining boundary that is not involved in the gluing.
	\end{theorem}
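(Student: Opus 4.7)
The strategy is to construct the excision isomorphism by writing down an explicit gluing map $\Phi$ and a cutting inverse $\Psi$, with well-definedness of $\Psi$ controlled by isotopy invariance (Proposition~\ref{propisotopy}) and the disk case already established in Lemma~\ref{cyclicyonedalemma}.

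First, I would define $\Phi : \int^{P \in \snc(S)} \snc(\Sigma; X, P, P^\vee) \to \snc(\Sigma'; X)$. For each $P$, a labeled graph on $\Sigma$ with boundary label $(X, P, P^\vee)$ has legs meeting the two copies of $S$ with matching dual projective labels; under the quotient $\Sigma \to \Sigma'$, paired legs combine into internal edges, yielding a labeled graph on $\Sigma'$. Dinaturality in $P$ along a morphism $\alpha : P \to Q$ in $\snc(S)$ is immediate from the graphical calculus together with Proposition~\ref{propisotopy}: the cylinder cobordism representing $\alpha$, sitting in a collar of $S$ inside $\Sigma$, can be absorbed either into the $P$-side or into the $Q$-side of the gluing, producing the same string-net on $\Sigma'$. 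The universal property of the coend then produces $\Phi$.

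Next, I construct the candidate inverse $\Psi$. Given $(\Omega, \underline{Y}, \underline{g}) \in \snc(\Sigma'; X)$, I apply Proposition~\ref{propisotopy} to choose a representative with $\Omega$ transversal to the image of $S$ in $\Sigma'$ and no vertex of $\Omega$ lying on $S$. The intersections of $\Omega$ with $S$, together with the incident edge labels, define an object $P_\Omega \in \snc(S)$; cutting $\Omega$ along $S$ then produces a labeled graph on $\Sigma$ with boundary $(X, P_\Omega, P_\Omega^\vee)$, whose image in the coend is the value $\Psi([\Omega, \underline{Y}, \underline{g}])$. Once $\Psi$ is shown to be well-defined, the identities $\Phi \circ \Psi = \id$ and $\Psi \circ \Phi = \id$ are direct consequences of the constructions, because cutting and regluing along $S$ (or vice versa) is the identity on representatives up to isotopy and coend dinaturality.

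The main step is thus the well-definedness of $\Psi$, i.e., independence of the transversal representative. Two transversal representatives are related by an isotopy which, by \cite[Corollary~1.3]{edwardskirby}, decomposes into isotopies supported in embedded disks. Moves in disks disjoint from $S$ cause no issue by isotopy invariance on $\Sigma$. A move in a disk $D' \subset \Sigma'$ meeting $S$ in an interval has preimage in $\Sigma$ a pair of half-disks glued along an interval in $S$, and the local computation is governed precisely by Lemma~\ref{cyclicyonedalemma}: the move corresponds to a coend dinaturality identification, so both sides of the move produce the same class in the coend. The hardest case is when $S$ contains circle components, since an isotopy on $\Sigma'$ can slide an edge of $\Omega$ all the way around a gluing circle, a move not supported in a single disk meeting $S$ along an interval. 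I would handle this by cutting each circle component into two overlapping intervals and applying the interval case twice; the resulting iterated coends assemble into the circle coend because $\snc$ is symmetric monoidal under disjoint unions of one-manifolds (Remark~\ref{remsncsymmon}). The careful bookkeeping of this Van Kampen-style assembly for coends is the principal technical obstacle.
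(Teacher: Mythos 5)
Your overall strategy (construct a gluing map $\Phi$, construct a cutting map $\Psi$, control well-definedness via the disk lemma) is the same as the paper's, but there are substantive gaps in your treatment of well-definedness.

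The central problem is your claim that ``two transversal representatives are related by an isotopy.'' This is false. Elements of $\snc(\Sigma';X)$ are equivalence classes under the colimit relations, which are generated by \emph{all} disk replacements in $\cat{C}\text{-}\Graphs(\Sigma';X)$---that is, replacing a graph fragment inside a disk by the corresponding corolla. Isotopies are only the derived special case treated in Proposition~\ref{propisotopy}; a generic disk replacement changes the underlying combinatorial graph and is not an isotopy. The paper's injectivity argument is precisely a check that \emph{every} disk evaluation relation on $\snc(\Sigma';X)$ is already implied by relations in the coend on the left-hand side. Your proposal, by framing well-definedness as invariance under Edwards--Kirby decompositions of isotopies, never engages with the actual generating relations of the colimit.

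Even reading ``moves'' generously as disk replacements, you only treat a disk $D'\subset\Sigma'$ whose preimage is ``a pair of half-disks glued along an interval in $S$,'' i.e.\ $|\partial D' \cap S|=2$. But $\partial D'\cap S$ can have any even finite cardinality, so $S$ can cut $D'$ into $\ell \geq 2$ pieces. The paper handles this by observing that transversality forces $\partial D\cap S$ to be a finite set, that $S$ therefore cuts $D$ into finitely many disks $D_1,\dots,D_\ell$ in $\Sigma$, and that $\snc(D;-)$ is then an \emph{iterated} coend over the $\snc(D_i;-)$ by repeated use of Lemma~\ref{cyclicyonedalemma}---one coend per shared boundary interval. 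This multi-piece case is where the real work happens and it is missing from your argument. Finally, the special treatment you propose for circle components of $S$ is a red herring: the dichotomy that matters is not ``interval vs.\ circle'' but ``$D$ meets $S$ in one arc vs.\ several arcs,'' and the paper's finite-intersection argument handles both types of boundary component uniformly. There is no need for overlapping intervals or a Van-Kampen-style assembly of coends once one observes that the relations of $\snc(\Sigma';X)$ are generated by replacements in disks and that any such disk is cut by $S$ into finitely many smaller disks.
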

	
	\begin{proof}
		The gluing maps $\snc(\Sigma;X,P,P^\vee)\to\snc(\Sigma';X)$ are clearly dinatural and hence induce the map~\eqref{eqnexcision}.
		
		Any admissible $\Proj\cat{C}$-labeled graph in $\Sigma'$ can be isotoped such that it intersects the gluing boundary only transversally and at least once. Thanks to isotopy invariance
		(Proposition~\ref{propisotopy}), this does not change the vector in the string-net space.
		After cutting the graph at the gluing boundary, we obtain an admissible $\Proj\cat{C}$-labeled graph in $\Sigma$ that after re-gluing recovers our original graph up to isotopy. This proves that~\eqref{eqnexcision} is surjective. 
		
		To prove that~\eqref{eqnexcision} is injective, we need to show that a relation arising through the evaluation on any disk $D\subset \Sigma'$ on the right hand side of~\eqref{eqnexcision} holds already on the left hand side. 
		To this end, denote by $S \subset \Sigma'$ the image of the gluing boundary in $\Sigma'$
		(by slight abuse of notation we use the same symbol as for the gluing boundary). If $D \cap S=\emptyset$, there is clearly nothing to show. 
		If $D \cap S\neq \emptyset$, we can slightly deform $D$ such that $\partial D$ and $S$ only intersect transversally without changing the relations induced by evaluation on $D$ (this is because the evaluation on $D$ only induces relations on string-nets that are transversal to $\partial D$, so we can always slightly adjust the shape of $D$). Next observe that $\partial D \cap S\subset S$ is compact, i.e.\ it is given by a disjoint union of finitely many closed intervals in $S$. But these intervals must each just consist of a point because of transversality. Therefore, $\partial D \cap S$ is a finite set. This implies, as one can see through an induction on the cardinality of $\partial D \cap S$,	 that $S$ cuts $D$ into finitely many disks $D_1,\dots,D_\ell$ 
		that can each be seen as disks in $\Sigma$. The finitely many disks intersect along closed intervals in their boundary and reproduce $D$ when glued along these closed intervals. 
		This entails 
		by Lemma~\ref{cyclicyonedalemma} that $\snc(D;-)$ is the coend of the functors $\snc(D_i;-)$, with one coend for the gluing along each interval. 
		This implies that
		the relations in $\snc(\Sigma';X)$ resulting from the evaluation on $D$ are exactly the following ones:
		\begin{itemize}\item The relations in $\snc(\Sigma;X,P,P^\vee)$ for each $P$ that arise from evaluation on the disks $D_1,\dots,D_\ell$ in $\Sigma$. But these relations hold obviously already in $\int^{P \in \snc(S)}  \snc(\Sigma;X,P,P^\vee)$.
			\item The relations that arise from moving string-nets through the intervals along which one has to glue the $D_1,\dots,D_\ell$ to get $D$, i.e.\ relations implemented through coends 
			\begin{align} \int^{Q\in \snc(I)} \snc(D_i;Q,Y_i) \otimes \snc(D_j;Q^\vee,Y_j)\end{align}
			 for all disks $D_i$ and $D_j$ with boundary labels $Y_i$ and $Y_j$ among the $D_1,\dots,D_m$
			that share a boundary interval $I \subset S$. 
			To see that these relations hold in
			$\int^{P \in \snc(S)}  \snc(\Sigma;X,P,P^\vee)$, we need to see that the map
			\small
			  \begin{align}
			  	\snc(D_i;Q,Y_i) \otimes \snc(D_j;Q^\vee,Y_j) \to  \snc(\Sigma;X,P,P^\vee) \to  \int^{P \in \snc(S)}  \snc(\Sigma;X,P,P^\vee) \label{eqnmapproofexc1}
			\end{align} 
		\normalsize induced by the embedding $D_i \cup D_j \subset \Sigma$ (we choose the labels compatibly) and the projection to the coend
		factors through $\int^{Q\in \snc(I)} \snc(D_i;Q,Y_i) \otimes \snc(D_j;Q^\vee,Y_j)$. This indeed follows from the fact that
		 the inclusion
			 $I\subset S$ induces a map \begin{align}
			 	\int^{Q\in \snc(I)} \snc(D_i;Q,Y_i) \otimes \snc(D_j;Q^\vee,Y_j) \to \int^{P \in \snc(S)}  \snc(\Sigma;X,P,P^\vee)\end{align}
		 	through which~\eqref{eqnmapproofexc1} clearly factors.
		\end{itemize}
		This concludes the proof that~\eqref{eqnexcision} is injective and therefore the proof of the statement.
	\end{proof}

For the notion of a modular functor, many different definitions exist~\cite{Segal,ms89,turaev,tillmann,baki}.
A sufficiently general one can be compactly defined using modular operads
 in the sense of Getzler-Kapranov~\cite{gkmod}.
 We first treat ordinary modular functors and then indicate the modifications needed for the open-closed case:	
A modular functor is a modular algebra over the modular operad of surfaces (or over a suitable extension of the modular surface operad, but this will not be relevant for the class of examples treated in this article). 
The modular algebra takes values in a suitable symmetric monoidal bicategory of linear categories.
This is the definition given in \cite[Section~7.3]{cyclic} or, in more generality, in~\cite[Section~3.1]{brochierwoike}.
There are different options for the symmetric monoidal target category of linear categories, and this is actually an important choice to make; we will expand on this in Section~\ref{freecomcompletion}.

We do not want to reproduce here all the details of the formal definition. Instead, we restrict ourselves to the rough picture, with attention to the case at hand. A modular functor will then consist of the following:

\begin{itemize}
	
	\item An object $\cat{A}$ in the symmetric monoidal bicategory of linear categories that we are considering.
	(If we are looking at $\Bimod$,  this would  just be a linear category.)
	We think of $\cat{A}$ as being the category associated to the circle.
	
	\item For a surface $\Sigma$ with $n \ge 0$ boundary circles, we get a 1-morphism $\mathfrak{F}(\Sigma) : \cat{A}^{\otimes n} \to I$, where $\otimes$ the monoidal product of linear categories that we are using and $I$ is the monoidal unit of our symmetric monoidal bicategory of linear categories that we choose.
	The mapping class group of $\Sigma$ acts on $\mathfrak{F}(\Sigma)$ through 2-isomorphisms. 
	
	\item There is a compatibility with gluing. 
	To this end, we will need a non-degenerate symmetric pairing
	$\kappa : \cat{A} \otimes \cat{A} \to I$. Non-degeneracy means that there is a coevaluation $\Delta : I \to \cat{A}\otimes\cat{A}$ that together with $\kappa$ satisfies the usual zigzag identities up to isomorphism. Symmetry means that coherent isomorphisms $\kappa(X,Y)\cong \kappa(Y,X)$ are part of the data.
		\end{itemize}
	The surface $\Sigma$ appearing above is
	 traditionally not an open-closed surface, 
	 but we will actually need this case, thereby leading us to
	the notion of an \emph{open-closed modular functor}. In that case, there will be a second
	 linear category $\cat{B}$
	(the category associated to a boundary interval), and if $\Sigma$ has, in addition to its $n$ boundary circles, 
	$m$ 
	open boundaries, we get a 1-morphism $\mathfrak{F}(\Sigma):\cat{A}^{\otimes n} \otimes \cat{B}^{\otimes m} \to I$. 
	Again, the mapping class group of $\Sigma$ acts through 2-isomorphisms on this functor. 
	
	A modular functor a priori only allows us to assign morphisms $\mathcal{A}^{\otimes n} \to I$ to surfaces $\Sigma$ with $n$ `incoming' boundaries. However, since the pairing $\kappa$ 
	 yields an equivalence between $\cat{A}$ and $\cat{A}^\opp$,
	  we can also assign morphisms $\mathcal{A}^{\otimes p}\to \mathcal{A}^{\otimes q}$ to $\Sigma$, which we interpret as the value of the modular functor on the cobordism $\Sigma$ from $p$ `incoming' circles to $q$ `outgoing' circles, see~\cite[Remark 2.2]{mwansular} for more details. This process of turning incoming to outgoing boundaries and vice versa can be extended to the open-closed case.
	
	The result that we want to record in this section is the following: For a pivotal finite tensor category $\cat{C}$, the string-net construction $\snc$ from 
	Section~\ref{secstringnet} will give us an open-closed modular functor with values in $\Bimod$.
	The category associated to a boundary circle is $\snc(\mathbb{S}^1)$; the category associated to a boundary interval is $\snc([0,1])$.
	For a surface with $n$ closed boundary components and $m$ open boundary components, we get by~\eqref{eqnfunctorforsurface0}
	a linear functor $\sn(\Sigma):\snc(\partial\Sigma)\to\Vect$
	on which the mapping class group acts (Corollary~\ref{cormcg}).
	Since $\snc(\partial \Sigma) \simeq \snc(\mathbb{S}^1)^{\otimes n} \otimes \snc([0,1])^{\otimes m}$ (Remark~\ref{remsncsymmon}), this is exactly what we need for an open-closed modular functor in $\Bimod$.
	The compatibility with gluing amounts exactly to the excision result (Theorem~\ref{thmexcision}). 
	The needed non-degenerate symmetric pairings 
	on $\snc([0,1])$
	and $\snc (\mathbb{S}^1)$
	are given by the string-nets on an elbow which is the cobordism $E_S\colon S\sqcup \overline{S} \to \emptyset $ 
	built from $[0,1]\times S$ and an the orientation-reversing reflection 
	diffeomorphism:
	\small
	\begin{align}
	\kappa : \snc([0,1]) \otimes \snc([0,1]) \simeq \Proj \cat{C} \otimes \Proj \cat{C} & \to \Vect \\ \quad (X,Y) &\mapsto \snc ( E_{[0,1]}; X,Y) \cong \cat{C}(I,X\otimes Y)\\
\beta : \snc (\mathbb{S}^1) \otimes \snc (\mathbb{S}^1) & \to \Vect \\ 
\left( (X_1,\dots,X_n), (Y_1, \dots,Y_m)\right) &\mapsto \snc ( E_{\mathbb{S}^1}; \underline{X}, \underline{Y}) \cong \snc (\mathbb{S}^1)\left((X_n^\vee, \dots , X_1^\vee) , (Y_1,\dots Y_n) \right) 
\end{align}\normalsize
Here we used Lemma~\ref{Lemma: or rev} for the identifications. 
Put differently, the rigid duality on $\snc([0,1])\simeq \Proj \cat{C}$ induces a non-degenerate symmetric pairing on $\snc(\mathbb{S}^1)$. 
	
We can now summarize:

\spaceplease
\begin{theorem}\label{thmsnmf}
	Let $\cat{C}$ be a pivotal finite tensor category.
	By assigning
	to a surface $\Sigma$ the linear functor $\sn(\Sigma):\snc(\partial\Sigma)\to\Vect$ from~\eqref{eqnfunctorforsurface0}
	one obtains a $\Bimod$-valued
	open-closed modular functor that we denote by $\snc$. 
\end{theorem}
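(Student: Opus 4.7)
The plan is to assemble each ingredient of an open-closed modular functor directly from the constructions of this section and to verify the operadic axioms via the excision theorem. First, I would designate $\snc(\mathbb{S}^1)$ as the category associated to a closed boundary circle and $\snc([0,1])\simeq \Proj\cat{C}$ (Remark~\ref{remsncint}) as the one associated to an open boundary interval. For any surface $\Sigma$ with $n$ closed and $m$ open boundary components, Remark~\ref{remsncsymmon} provides $\snc(\partial\Sigma)\simeq \snc(\mathbb{S}^1)^{\otimes n}\otimes\snc([0,1])^{\otimes m}$, so that~\eqref{eqnfunctorforsurface0} is precisely the 1-morphism in $\Bimod$ demanded by the open-closed modular operad. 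The required mapping class group 2-isomorphisms are then supplied by Corollary~\ref{cormcg}.

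Next, I would install the compatibility with operadic composition. Every composition in the modular operad of open-closed surfaces decomposes into a finite sequence of self-gluings along pairs of boundary components of matching type, and Theorem~\ref{thmexcision} yields precisely the coend isomorphism
\begin{align}
\int^{P\in\snc(S)} \snc(\Sigma;X,P,P^\vee) \xrightarrow{\ \cong\ } \snc(\Sigma';X)
\end{align}
needed to match the bimodule composition in $\Bimod$, which is itself given by coends. Associativity and symmetric-group equivariance of gluing follow from the universal property of coends together with the isotopy invariance of string-nets (Proposition~\ref{propisotopy}) applied to graphs straddling the gluing locus.

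Finally, I would construct the non-degenerate symmetric pairings $\kappa$ on $\snc([0,1])^{\otimes 2}$ and $\beta$ on $\snc(\mathbb{S}^1)^{\otimes 2}$ as the string-net values on the elbow cobordisms $E_{[0,1]}$ and $E_{\mathbb{S}^1}$. Non-degeneracy of $\kappa$ is immediate from Lemma~\ref{lemmasndisks}: the identification $\kappa(X,Y)\cong\cat{C}(I,X\otimes Y)$ realizes the rigid duality on $\Proj\cat{C}$, so the rigid coevaluation furnishes the required $\Delta$. For $\beta$, Lemma~\ref{Lemma: or rev} identifies $\beta$ with the hom-bimodule of $\snc(\mathbb{S}^1)$; coevaluation is realized by the string-net on the oppositely oriented elbow, and the zigzag identities are verified by gluing two elbows into a cylinder and invoking excision to identify the composite with the identity bimodule on $\snc(\mathbb{S}^1)$. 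Symmetry of both pairings is witnessed by the evident involution of the elbow cobordism.

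The main obstacle in this strategy is the gluing compatibility, but that has already been overcome in Theorem~\ref{thmexcision}; the remaining verifications reduce to essentially routine bookkeeping within the modular operad of open-closed surfaces.
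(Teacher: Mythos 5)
Your proposal follows essentially the same route as the paper: assign $\snc(\mathbb{S}^1)$ and $\snc([0,1])\simeq\Proj\cat{C}$ to the circle and interval, use~\eqref{eqnfunctorforsurface0} together with Remark~\ref{remsncsymmon} for the surface 1-morphisms, invoke Corollary~\ref{cormcg} for the mapping class group action, identify gluing compatibility with the excision isomorphism of Theorem~\ref{thmexcision}, and produce the non-degenerate symmetric pairings from string-nets on the elbow cobordisms with non-degeneracy traced back to rigid duality on $\Proj\cat{C}$. The paper presents this assembly more tersely and without a formal proof environment, but the content and the role of each ingredient match yours.
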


\spaceplease
\section{The string-net description of the central monad\label{seccentralmonad}}
By an important result of Day and Street \cite{daystreet} refined by Brugières and Virelizier~\cite{bruguieresvirelizier} for any finite tensor category $\cat{C}$, one can build a monad,
\begin{align} Z:\cat{C}\to\cat{C} \ , \quad X \mapsto \int^{Y\in\cat{C}}Y^\vee \otimes X \otimes Y\label{eqncentralmonad} \end{align}
 the
so-called
\emph{central monad},
 whose Eilenberg-Moore category of algebras is equivalent to the Drinfeld center
 (for simplicity, we continue to assume pivotality of $\cat{C}$ throughout). 
The central monad is defined in algebraic terms, and the purpose of this section is to give a topological interpretation of the central monad in terms of string-nets. 
We will use this to describe explicitly the category $\snc(\mathbb{S}^1)$
that the string-net construction associates to the circle.
The idea is of course to relate this circle category to the Drinfeld center. 
This was done in the spherical fusion case in~\cite{kirillovsn} without the use of monads. 
Monadic techniques for the reconstruction of the circle category as a Kleisli category of some version of the central monad are used in~\cite{kst} in a framed setting. 
The construction of that article is applied to all objects of a not necessarily semisimple finite tensor category. We have to repeat here the warning from the beginning of Section~\ref{secopclmf} that, after the finite cocompletion that we will have to perform later to be able to compare to the Lyubashenko construction, applying the construction to all objects would produce a modular functor whose category on the circle is inevitably semisimple
(this is not an issue in~\cite{kst} because a different cocompletion is used).
Therefore, the results of~\cite{kst} do not directly apply.
 They serve however as a guiding principle for this section. 

In order to discuss all of this in more detail, recall e.g.\ from
\cite[Section~VI.1]{maclane} that a
 monad on a category $\cat{D}$ is an endofunctor $T:\cat{D}\to\cat{D}$ 
equipped with natural transformations $\id_\cat{D}\to T$ and $T^2 \to T$ making $T$ a unital and associative algebra in the monoidal category of endofunctors of $\cat{D}$. For a monad $T$ on $\cat{D}$, one can define the \emph{Kleisli category} $\catf{KL}(T)$ of $T$ whose objects are the objects of $\cat{D}$ with the hom sets given by $\catf{KL}(T)(d,d'):=\cat{D}(d,Td')$ for $d,d'\in\cat{D}$ (the composition comes from the monad structure on $T$). We can also define the \emph{Eilenberg-Moore category $\catf{EM}(T)$ of $T$-algebras}, i.e.\ the category of objects $d\in \cat{D}$ equipped with a morphism $f:Td\to d$ in $\cat{D}$ subject to a condition. A morphism $(d,f)\to (d',f')$ of $T$-algebras is a morphism $g:d\to d'$ in $\cat{D}$ with $f'T(g)=gf$. 

The central monad $Z:\cat{C}\to\cat{C}$ of a pivotal finite tensor category $\cat{C}$ is defined via the coend~\eqref{eqncentralmonad}, with the monad structure induced by the tensor product of dummy variables and the isomorphism $(P\otimes Q)^\vee \cong Q ^\vee \otimes P^\vee$. By \cite{daystreet} the Eilenberg-Moore category of $Z$-algebras is linearly equivalent to the Drinfeld center, i.e.\ the 
braided monoidal category of objects $X\in\cat{C}$ equipped with a half braiding (a natural isomorphism $X \otimes - \cong - \otimes X$ satisfying the usual hexagon relations of braidings). 
Under the equivalence \begin{align} \catf{EM}(Z)\simeq Z(\cat{C})\label{eqnEMequiv} \end{align} the free-forgetful adjunction between $\cat{C}$ and $\catf{EM}(Z)$
(the forgetful functor forgets the algebra structure; its left adjoint freely adds one) translates to the adjunction between the forgetful functor $U:Z(\cat{C})\to\cat{C}$ forgetting the half braiding and its left adjoint $F:\cat{C}\to Z(\cat{C})$
that sends $X\in\cat{C}$ to $\int^{Y\in \cat{C}} Y^\vee \otimes X \otimes Y$ equipped with the \emph{non-crossing half braiding}
that is explained in~Figure~\ref{fignoncrossing}.

\begin{figure}[h]
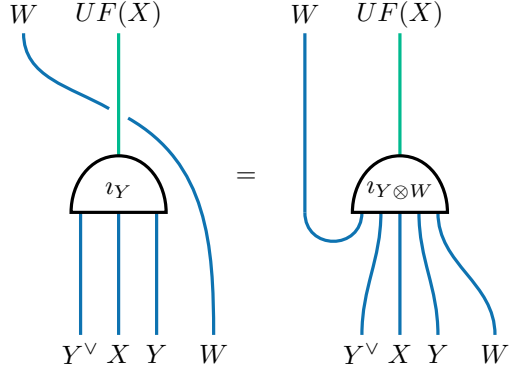

\[
\tikzfig{NC0}=\quad\tikzfig{NC1}
\]
\caption{The \emph{non-crossing} half braiding (depicted by a crossing of $UF(X)$ over an arbitrary object $W\in\cat{C}$) is defined by the above formula, where the half-disk shaped coupons stand for the components of the universal dinatural transformation.}\label{fignoncrossing}
\end{figure}

Through the string-net construction, we will not directly find the central monad, but its restriction to $\Proj\cat{C}$. The fact that the central monad correctly restricts is a priori not obvious, but can be shown easily:

\begin{lemma}\label{lemmarestrictmonad}
	For any pivotal finite tensor category $\cat{C}$, the central monad $Z$ restricts to a monad $z:\Proj\cat{C}\to\Proj\cat{C}$ on $\Proj\cat{C}$. 
	\end{lemma}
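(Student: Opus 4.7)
The plan is to factor the central monad as $Z=U\circ F$, where $F:\cat{C}\to Z(\cat{C})$ is the induction functor left adjoint to the forgetful functor $U:Z(\cat{C})\to\cat{C}$, as furnished by the equivalence~\eqref{eqnEMequiv} between $\catf{EM}(Z)$ and the Drinfeld center. It then suffices to establish two claims: (a) $F$ sends $\Proj\cat{C}$ into $\Proj Z(\cat{C})$, and (b) $U$ sends $\Proj Z(\cat{C})$ into $\Proj\cat{C}$. Once both are proven, the composition $Z=UF$ preserves $\Proj\cat{C}$, and the unit $\id_\cat{C}\Rightarrow Z$ and multiplication $Z\circ Z\Rightarrow Z$ restrict automatically since $\Proj\cat{C}\subset\cat{C}$ is a full subcategory.

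For (a), I would first observe that $U$ is exact: kernels and cokernels in $Z(\cat{C})$ are computed in $\cat{C}$, with a uniquely induced half braiding. Since $F$ is left adjoint to an exact functor, the standard adjoint correspondence between exactness of a right adjoint and preservation of projectives by its left adjoint yields that $F$ preserves projective objects.

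For (b), I would use that both $\cat{C}$ and $Z(\cat{C})$ are finite tensor categories and hence self-injective, so that $\Proj=\Inj$ in each. It is therefore enough to show that $U$ preserves injective objects, which by $F\dashv U$ is equivalent to $F$ being exact. Exactness of $F$ is a genuinely non-trivial structural result for finite tensor categories, due to Shimizu; alternatively, one may deduce it from a Radford-type identification between $F$ and the right adjoint $R$ of $U$, twisted by the distinguished invertible object $\alpha$, together with the facts that $R$ is left exact as a right adjoint and that tensoring with an invertible object is exact.

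Combining (a) and (b), $Z=UF$ preserves $\Proj\cat{C}$. The main obstacle is step (b): the exactness of $F$ is not formal, and cannot be deduced from the adjunction data alone --- it must be imported from the general theory of finite tensor categories. Everything else is a routine consequence of adjoint-functor yoga and self-injectivity.
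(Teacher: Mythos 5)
Your proposal is correct and matches the paper's argument: both factor the central monad as $Z=UF$, use exactness of $U$ and $F$ (the latter being the non-trivial input, for which the paper cites Shimizu), and combine preservation of projectives by $F$ with preservation of injectives by $U$ via self-injectivity of finite tensor categories. Your elementary direct argument for exactness of $U$ (limits and colimits computed in $\cat{C}$) is a minor cosmetic difference from the paper, which simply cites Shimizu for both exactness statements.
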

	
	\begin{proof}Clearly, we can restrict $Z$ to $\Proj \cat{C}$, but we have to verify that it correctly restricts to $\Proj\cat{C}$ in range.
		To this end, recall that both the forgetful functor $U:Z(\cat{C})\to\cat{C}$ and its left adjoint $F:\cat{C}\to Z(\cat{C})$ are exact
		\cite[Corollary~6.9]{shimizuunimodular} and their composition is $UF=Z$. This implies that $F$ preserves projective objects and that $U$ preserves injective objects (which however in $\cat{C}$ and $Z(\cat{C})$ are the same as the projective ones by self-injectivity of finite tensor categories). 
		Therefore $Z=UF$ preserves projective objects. 
		\end{proof}
	
	We can now prove the following generalization of \cite[Theorem~6.3]{kst}:

\begin{proposition}[Central monad from string-nets]\label{propcentralmonad}
	Let $\cat{C}$ be a pivotal finite tensor category.
	Consider projective objects $P$ and $Q$ of $\cat{C}$, each seen as an object in $\snc(\mathbb{S}^1)$. 	
	Then
	\begin{align}
		\snc(\mathbb{S}^1) (P,Q) \cong \cat{C}(P,zQ) \    \label{eqnkleislieqn}
		\end{align}
	for the restriction  $z : \Proj \cat{C}\to\Proj\cat{C}$ of the central monad from Lemma~\ref{lemmarestrictmonad}.
	The composition in  $\snc(\mathbb{S}^1) (P,Q)$ comes from the monad structure on $z$.
	In this way, \eqref{eqnkleislieqn} exhibits $\snc(\mathbb{S}^1)$ as the Kleisli category of $z$:
	\begin{align} \snc(\mathbb{S}^1) \simeq \mathsf{KL}(z) \  .
		\end{align}
	\end{proposition}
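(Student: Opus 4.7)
The plan is to compute $\snc(\mathbb{S}^1)(P,Q)=\snc(\mathbb{S}^1\times[0,1];P^\vee,Q)$ by excision and then identify the result with $\cat{C}(P,zQ)$. I would first cut the cylinder $\mathbb{S}^1\times[0,1]$ along an interval running from one boundary circle to the other; the result is a disk whose boundary carries, in cyclic order, the labels $P^\vee$, $Y$, $Q$, $Y^\vee$, where $Y\in\snc([0,1])\simeq\Proj\cat{C}$ comes from the two sides of the cut (Remark~\ref{remsncint}). Applying the excision theorem (Theorem~\ref{thmexcision}), evaluating the disk string-net space via Lemma~\ref{lemmasndisks}, and then using pivotality to move $P^\vee$ across the hom yields
\begin{align}
\snc(\mathbb{S}^1)(P,Q)\cong \int^{Y\in\Proj\cat{C}} \cat{C}(P,Y\otimes Q\otimes Y^\vee).
\end{align}

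Next I would identify this expression with $\cat{C}(P,zQ)$. Since $P$ is projective, $\cat{C}(P,-)$ is exact and commutes with coends (which in our finite setting are finite colimits), so I can move the hom inside: the expression becomes $\cat{C}(P,\int^{Y\in\Proj\cat{C}} Y\otimes Q\otimes Y^\vee)$. After the substitution $Y\mapsto Y^\vee$ (using pivotality), the inner coend is $\int^{Y\in\Proj\cat{C}} Y^\vee\otimes Q\otimes Y$, and it remains to identify this with $zQ=ZQ=\int^{Y\in\cat{C}} Y^\vee\otimes Q\otimes Y$ (Lemma~\ref{lemmarestrictmonad}). The restriction of the indexing category from $\cat{C}$ to $\Proj\cat{C}$ is legitimate because every object $Y\in\cat{C}$ admits a projective cover $p\colon P'\twoheadrightarrow Y$: the resulting surjection $\id_{Y^\vee}\otimes\id_Q\otimes p$, together with the dinaturality relation associated to $p$, shows that $[x]_Y$ in the full coend is already represented by a class supported on a projective summand, and dinaturality relations for general morphisms in $\cat{C}$ are reducible via projective covers to relations for morphisms between projective objects.

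To complete the proof I would verify that the isomorphism matches the composition structures: stacking two annuli and re-applying excision produces, under the identifications above, the tensor product of the two integration variables, which is precisely the monad multiplication on $z$ given by $(Y\otimes Y')^\vee\otimes Q\otimes (Y\otimes Y')\to Y^\vee\otimes(Y'^\vee\otimes Q\otimes Y')\otimes Y$; likewise, the identity string-net corresponds to the monad unit induced by the component at $Y=I$ of the universal dinatural transformation. The equivalence $\snc(\mathbb{S}^1)\simeq\mathsf{KL}(z)$ then follows once I observe that every object of $\snc(\mathbb{S}^1)$, being a finite collection of projectively-labeled points on $\mathbb{S}^1$, is isomorphic to the single-point object labeled by the tensor product of its labels via a corolla-shaped string-net in a surrounding disk. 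The main obstacle is the careful bookkeeping for composition: while the coend manipulations above are essentially formal, tracing how the topological gluing of annuli, combined with repeated application of excision and disk evaluation, yields precisely the coend-theoretic monad multiplication requires a detailed diagrammatic argument of the kind carried out in~\cite{kst} in the framed setting, here adapted to the unframed pivotal case.
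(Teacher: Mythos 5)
Your proposal follows essentially the same route as the paper's proof: excision to obtain $\snc(\mathbb{S}^1)(P,Q)\cong\int^{X\in\Proj\cat{C}}\cat{C}(P,X^\vee\otimes Q\otimes X)$, exactness of $\cat{C}(P,-)$ to pull the coend inside, identification of the coend over $\Proj\cat{C}$ with the one over $\cat{C}$ to recover $zQ$, and finally the stacking-of-annuli/Fubini argument to match composition with the monad multiplication.

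One step deserves a caveat. For the two coend computations you invoke, the paper gives explicit citations that your argument only partially replaces. First, the claim that a coend over $\Proj\cat{C}$ commutes with the (only finitely cocontinuous) functor $\cat{C}(P,-)$ is justified in the paper via the Agreement Principle (\cite[Theorem~2.9]{dva}, after~\cite{mcarthy,keller}); you assert it as ``in our finite setting these are finite colimits,'' which is the same content but needs the citation. Second, and more substantively, for $\int^{Y\in\Proj\cat{C}}Y^\vee\otimes Q\otimes Y\cong\int^{Y\in\cat{C}}Y^\vee\otimes Q\otimes Y$ the paper cites \cite[Proposition~5.1.7]{kl}, which rests on exactness of $\otimes$. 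Your projective-cover argument cleanly establishes that the comparison map is surjective (every class $[x]_Y$ is hit from a projective), but the phrase ``dinaturality relations for general morphisms in $\cat{C}$ are reducible via projective covers to relations for morphisms between projective objects'' is the injectivity half and is not actually worked out: one needs to lift a morphism $f\colon X\to Y$ and the element it acts on through a projective presentation of both ends and check that the resulting zigzag of relations among projectives reproduces the original identification, which is a genuine (if standard) diagram chase. As stated, this is a sketch rather than a proof; the cited Kerler--Lyubashenko result is the cleaner way to close the gap, and is what the paper uses.
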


\begin{proof}
	Excision from Theorem~\ref{thmexcision}
	 and $\snc([0,1])\simeq \Proj\cat{C}$
	from Remark~\ref{remsncint} tells us
	\begin{align}
	\label{eqnexc2}	\snc(\mathbb{S}^1) (P,Q) = \snc(\mathbb{S}^1 \times [0,1] ; P^\vee , Q) \cong \int^{X \in \Proj\cat{C}} \cat{C}(P, X^\vee\otimes  Q \otimes X) \ . 
		\end{align}
	The coend over $\Proj \cat{C}$ can be expressed as finite colimit via the \emph{Agreement Principle} \cite{mcarthy,keller}, see
	\cite[Theorem~2.9]{dva} for a version adapted for finite (tensor) categories. 
	Since $P$ is projective, $\cat{C}(P,-)$ is exact and preserves finite colimits.
	This implies
	\begin{align}
		\snc(\mathbb{S}^1) (P,Q) &\cong \cat{C}\left(P, \int^{X\in\Proj\cat{C}} X^\vee \otimes Q \otimes X\right)\\&\cong \cat{C}\left(P, \int^{X\in \cat{C}} X^\vee \otimes Q \otimes X\right)\\&=\cat{C}(P,zQ) \ . \label{eqnexc3}	
		\end{align}
	The replacement of $\int^{X\in\Proj\cat{C}}$ with $\int^{X\in \cat{C}}$ is possible in this particular case because of the exactness of $\otimes$ \cite[Proposition~5.1.7]{kl}.

	The functor $z$ is the restriction of the central monad
	(Lemma~\ref{lemmarestrictmonad}), but we still need to verify that the composition of morphisms  in $\snc(\mathbb{S}^1)$ comes from the monad structure on $z$.
	The composition in $\snc(\mathbb{S}^1)$ can also be described via excision, and the result is the following: 
	Under the excision isomorphism~\eqref{eqnexc2} the composition is
	\begin{align}
	&	\int^{X \in \Proj\cat{C}} \cat{C}(P, X^\vee\otimes  Q \otimes X) \otimes \int^{Y \in \Proj\cat{C}} \cat{C}(Q, Y^\vee\otimes  R \otimes Y)\\
	\cong &	\int^{X,Y \in \Proj\cat{C}} \cat{C}(P, X^\vee\otimes  Q \otimes X) \otimes  \cat{C}(Q, Y^\vee\otimes  R \otimes Y)
		 \to \int^{U \in \Proj\cat{C}} \cat{C}(P, U^\vee\otimes  R \otimes U) \ , 
		\end{align}
	where the first isomorphism is 
	the Fubini isomorphism for coends, and the second one is  induced by composition in $\snc([0,1])$ under the coend, i.e.\ it amounts to the composition over $Q$ and by tensoring $X$ and $Y$ together to obtain a new dummy variable $U=X\otimes Y$ (this uses $(X\otimes Y)^\vee \cong Y^\vee \otimes X^\vee$). Under the isomorphism~\eqref{eqnexc3}, the composition in $\snc(\mathbb{S}^1)$ is therefore induced by the monad structure
	\begin{align}
		\int^{X,Y \in\cat{C}} Y^\vee \otimes X ^\vee \otimes P \otimes X \otimes Y \to \int^{U\in\cat{C}} U^\vee \otimes P \otimes U 
		\end{align}
	on $z:\Proj\cat{C}\to\Proj\cat{C}$
	coming from the monoidal product and the fact that $-^\vee$ is opp-monoidal. 
	But this is exactly the restriction of the central monad structure. 
	
	Since all objects in $\snc(\mathbb{S}^1)$ are isomorphic to some projective object in $\cat{C}$ (seen as object in $\snc(\mathbb{S}^1)$ by placing it on the circle), we obtain from~\eqref{eqnkleislieqn}, together with the fact that the composition in $\snc(\mathbb{S}^1)$ is the one coming from the monad structure of $z$,
	 that $\snc(\mathbb{S}^1)$ is the Kleisli category of $z$. 
	\end{proof}

	The Kleisli category of a monad $T$ on a category $\cat{D}$ comes with a functor $R: \catf{KL}(T)\to \cat{D}$ sending $d \in \catf{KL}(T)$ to $Td$ and a morphism $f$ from $d$ to $d'$ in $\catf{KL}(T)$, i.e.\ a morphism $f:d\to Td'$ in $\cat{D}$ to $Td \ra{T(f)} T^2d' \ra{T^2 \to T} Td'$. This functor is right adjoint to the functor $L:\cat{D}\to \catf{KL}(T)$ sending $d\in \cat{D}$ to $d$ and a morphism $g : d \to d'$ in $\cat{D}$ to $d \ra{g} d' \ra{\text{unit}\ \id_\cat{D}\to T} Td'$. 	
	The adjunction $L\dashv R$ induces the monad $T=RL$, and in fact, it is initial among all such adjunctions, see e.g.\
	\cite[Proposition~5.2.12]{riehlcat}.
We hence conclude from Proposition~\ref{propcentralmonad}:	Since $\snc(\mathbb{S}^1)$ is the Kleisli category of $z$, it comes with an adjunction $\ell \dashv r$ between $\ell : \Proj\cat{C}\to \snc(\mathbb{S}^1)$ and $r : \snc(\mathbb{S}^1) \to \Proj\cat{C}$.

	 The Eilenberg-Moore category of $T$, with its free-forgetful adjunction, is terminal among all adjunctions inducing $T$.
	 Fortunately, the Eilenberg-Moore category of the restricted central monad $z$ can be described explicitly:

\begin{lemma}\label{lemmaem}
	For any pivotal finite tensor category $\cat{C}$,
	the Eilenberg-Moore category $\catf{EM}(z)$ of the restricted central monad
	$z:\Proj\cat{C}\to\Proj\cat{C}$
	can be canonically identified with the full subcategory
	$U\text{-}\Proj Z(\cat{C})$ of the Drinfeld center $Z(\cat{C})$ of all $U$-projective objects, i.e.\ all those $X\in Z(\cat{C})$ such that $UX$ is projective, where $U:Z(\cat{C})\to\cat{C}$ is the forgetful functor.
	\end{lemma}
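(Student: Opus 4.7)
The plan is to identify $\catf{EM}(z)$ with a full subcategory of $\catf{EM}(Z)$ and then translate via the Day--Street equivalence $\catf{EM}(Z)\simeq Z(\cat{C})$ already cited in the excerpt. Concretely, I would proceed as follows.

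First I would unpack the definition: an object of $\catf{EM}(z)$ is a pair $(P,\mu)$ consisting of a projective object $P\in\Proj\cat{C}$ together with a structure map $\mu:zP\to P$ in $\Proj\cat{C}$ satisfying the associativity and unit axioms. By Lemma~\ref{lemmarestrictmonad}, the restricted monad $z$ is literally the restriction of $Z$ along the inclusion $\iota:\Proj\cat{C}\hookrightarrow\cat{C}$, so that $zP=ZP$ for $P\in\Proj\cat{C}$, and the multiplication and unit of $z$ are obtained from those of $Z$ by restriction. Since $\iota$ is fully faithful, a morphism $zP\to P$ in $\Proj\cat{C}$ is the same datum as a morphism $ZP\to P$ in $\cat{C}$, and the $z$-algebra axioms coincide with the $Z$-algebra axioms. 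This provides a canonical fully faithful functor
\begin{align}
\catf{EM}(z)\hookrightarrow \catf{EM}(Z)
\end{align}
whose essential image consists precisely of those $Z$-algebras $(X,\mu)$ with $X\in\Proj\cat{C}$.

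Next I would transport this along the Day--Street equivalence~\eqref{eqnEMequiv}, which sends a $Z$-algebra $(X,\mu)$ to the object of $Z(\cat{C})$ whose underlying $\cat{C}$-object is $X$ and whose half braiding is reconstructed from $\mu$ via the universal property of the coend defining $Z$. Under this equivalence, the condition ``$X\in\Proj\cat{C}$'' on a $Z$-algebra corresponds exactly to the condition ``$U(X,\beta)$ is projective'' on an object $(X,\beta)\in Z(\cat{C})$, which is by definition the full subcategory $U\text{-}\Proj Z(\cat{C})$. Composing the two fully faithful identifications gives the desired canonical equivalence
\begin{align}
\catf{EM}(z)\simeq U\text{-}\Proj Z(\cat{C}).
\end{align}

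There is essentially no obstacle here beyond the bookkeeping that restricting $Z$ to $\Proj\cat{C}$ does not change the algebra axioms; the only point one has to be mildly careful about is that $zP=ZP$ as objects and not merely up to isomorphism (which is ensured by the fact that $Z$ lands in $\Proj\cat{C}$ when restricted to $\Proj\cat{C}$, as used in the proof of Lemma~\ref{lemmarestrictmonad}). The morphism side is automatic since $\Proj\cat{C}\subset\cat{C}$ is full, so morphisms of algebras agree on the nose under both descriptions.
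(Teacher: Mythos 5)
Your argument is correct and follows essentially the same route as the paper's proof: both identify $\catf{EM}(z)$ with the full subcategory of $\catf{EM}(Z)$ on algebras whose underlying object is projective (using that $z$ is the on-the-nose restriction of $Z$ and that $\Proj\cat{C}\subset\cat{C}$ is full), and then transport along the Day--Street equivalence $\catf{EM}(Z)\simeq Z(\cat{C})$ to match this with $U$-projective objects. Your write-up merely spells out the fully faithful embedding $\catf{EM}(z)\hookrightarrow\catf{EM}(Z)$ a bit more explicitly than the paper does.
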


\begin{proof}
	Denote by $\catf{EM}(Z)$ the Eilenberg-Moore category of the central monad $Z:\cat{C}\to \cat{C}$ and by $F\dashv U$ the free-forgetful adjunction associated to it. Now the $U$-projective objects in $\catf{EM}(Z)$ are exactly pairs of a projective object $P\in \cat{C}$ and a map $ZP \to P$ making $P$ a $Z$-module. By the definition of $z$ a map $ZP\to P$ is  just a map $zP \to P$. For this reason, $U\text{-}\Proj 	\catf{EM}(Z) = \catf{EM}(z)$.
	
	But by \cite{daystreet} $\catf{EM}(Z)$ is equivalent to $Z(\cat{C})$ such that $U$ takes the role of the forgetful functor $Z(\cat{C})\to\cat{C}$ and $F$ the role of its left adjoint. This implies the assertion. 
	\end{proof}

We denote by $f: \Proj\cat{C}\to U\text{-}\Proj Z(\cat{C})$ and $u:U\text{-}\Proj Z(\cat{C}) \to \Proj\cat{C}$ the restriction of $F:\cat{C}\to Z(\cat{C})$ and
$U:Z(\cat{C})\to \cat{C}$, respectively. It is a consequence of Lemma~\ref{lemmarestrictmonad} and its proof that these restrictions exist.

	\begin{theorem}\label{thmfunctorH}
	For any pivotal finite tensor category $\cat{C}$, there is a unique embedding
	\begin{align}
		h : \snc(\mathbb{S}^1)\to U\text{-}\Proj Z(\cat{C})
	\end{align} 
	such that
	$h\ell \cong f$ and $uh\cong r$. 
	The essential image of $h$ consists exactly of those $X\in Z(\cat{C})$ that are of the form $F(P)$ for $P\in\Proj\cat{C}$, i.e.\
	$h$ induces an equivalence
	\begin{align}
		h : \snc(\mathbb{S}^1) \ra{\simeq} F(\Proj\cat{C})  \label{eqnsmallh}
	\end{align}
	after restriction in range.
\end{theorem}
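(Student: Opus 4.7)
The plan is to exploit the universal property of the Kleisli category: among all adjunctions whose induced monad is a given monad $T$, the Kleisli construction is initial, while the Eilenberg-Moore construction is terminal. This will give us $h$ for free with its uniqueness, and we then only need to identify its essential image.

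More concretely, by Proposition~\ref{propcentralmonad} we have an equivalence $\snc(\mathbb{S}^1)\simeq\catf{KL}(z)$ with attendant adjunction $\ell\dashv r$, and by Lemma~\ref{lemmaem} we have an identification $U\text{-}\Proj Z(\cat{C})\simeq\catf{EM}(z)$ with attendant adjunction $f\dashv u$, both inducing the monad $z:\Proj\cat{C}\to\Proj\cat{C}$. Invoking the standard universal property (see \cite[Proposition~5.2.12]{riehlcat}), there is a unique functor $h:\snc(\mathbb{S}^1)\to U\text{-}\Proj Z(\cat{C})$ of Kleisli resolutions, characterized by $h\ell\cong f$ and $uh\cong r$. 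Unpacking the construction, $h$ sends $P\in\snc(\mathbb{S}^1)$ (i.e.\ a projective object placed on the circle) to the free $z$-algebra $fP=FP\in Z(\cat{C})$, and sends a morphism $P\to Q$ in $\snc(\mathbb{S}^1)$, which is by Proposition~\ref{propcentralmonad} a morphism $P\to zQ=UFQ$ in $\cat{C}$, to its $F\dashv U$-mate $FP\to FQ$ in $Z(\cat{C})$.

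Fully faithfulness of $h$ is then a direct computation using Proposition~\ref{propcentralmonad} together with the $F\dashv U$ adjunction:
\begin{align}
\snc(\mathbb{S}^1)(P,Q)\cong \cat{C}(P,zQ)=\cat{C}(P,UFQ)\cong Z(\cat{C})(FP,FQ)=U\text{-}\Proj Z(\cat{C})(hP,hQ) \ .
\end{align}
Since every object of $\snc(\mathbb{S}^1)$ is (canonically isomorphic to) one coming from some $P\in\Proj\cat{C}$ via $\ell$, and $h\ell\cong f$, the essential image of $h$ is precisely $f(\Proj\cat{C})=F(\Proj\cat{C})\subseteq U\text{-}\Proj Z(\cat{C})$. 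Restricting $h$ in range therefore yields the claimed equivalence $\snc(\mathbb{S}^1)\xrightarrow{\simeq}F(\Proj\cat{C})$.

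The only mildly subtle point is verifying that the comparison functor, extracted via the Kleisli universal property from the adjunctions $(\ell,r)$ and $(f,u)$, really admits the explicit description on objects and morphisms used above; but this is routine once one traces through the identifications in Proposition~\ref{propcentralmonad} (presenting composition in $\snc(\mathbb{S}^1)$ via the monad structure of $z$) and Lemma~\ref{lemmaem} (presenting $u\circ f=z$ via restriction of the central monad). I do not anticipate a genuine obstacle here: the entire statement is a formal consequence of the monadic reformulations already established.
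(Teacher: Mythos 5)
Your argument coincides with the paper's, which likewise obtains $h$ by applying \cite[Proposition~5.2.12]{riehlcat} to Proposition~\ref{propcentralmonad} and Lemma~\ref{lemmaem}; where the paper simply cites \cite[Lemma~5.2.13]{riehlcat} for fully faithfulness and the essential image, you unpack that lemma by an explicit hom-space computation and by chasing $h\ell\cong f$, which is a correct and equivalent way of reaching the same conclusion.
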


\begin{proof}
	For existence of uniqueness of the functor $h$, one applies \cite[Proposition~5.2.12]{riehlcat} to Proposition~\ref{propcentralmonad} and
	Lemma~\ref{lemmaem}. From \cite[Lemma~5.2.13]{riehlcat}, one deduces that $h$ is fully faithful  as well as the description of its essential image. 
\end{proof}

	\spaceplease
\section{The finitely cocompleted string-net construction\label{freecomcompletion}}

So far, we have seen that the string-net construction gives rise to an open-closed modular functor with values in the category $\Bimod$ (Theorem~\ref{thmsnmf}).
For the comparison to other constructions of modular functors,
working with bimodules is slightly inconvenient. For this reason, we explain in this section how to pass from a sufficiently finite subcategory of $\Bimod$ to other bicategories of linear categories. 
This will be accomplished through a well-known process called \emph{free finite cocompletion}, see e.g.~\cite{daylack} for a reference.
The material up to Proposition~\ref{Prop: Equivalence Bimod Rexf} is standard, or a minor variation of standard material that we give here for completeness and to fix notation.

\begin{definition}\label{defAhut}
For any linear category $\cat{A}$ with finite-dimensional morphism spaces, we 
 define the \emph{category of right $\cat{A}$-modules} as the  category $\widehat{\cat{A}}= \Cat_k(\cat{A}^{\opp}, \vect)$ of linear functors from $\cat{A}^\opp$ to the category $\vect$ of finite-dimensional $k$-vector spaces
 (in different contexts, these would be called $\vect$-valued presheaves).
The linear category $\widehat{\cat{A}}$ is called the \emph{finite cocompletion} of $\cat{A}$.
\end{definition}

\begin{remark}\label{Rem: free finite}
Through $\cat{A}\ni a \mapsto \cat{A}(-,a)$ we obtain an embedding $\iota_\cat{A} : \cat{A}\to\widehat{\cat{A}}$, the \emph{Yoneda embedding}. Since presheaves on $\cat{A}$ with values in \emph{finite-dimensional} vector spaces are compact objects in the (non-finite) free cocompletion $\Cat_k(\cat{A}^{\opp}, \Vect)$, they are \emph{finite} colimits of  representable objects. On the other hand, finite colimits of representable presheaves on $\cat{A}$ (note that $\cat{A}$
 has finite dimensional hom spaces by assumption) are valued in $\vect$. Therefore, $\widehat{\cat{A}}$ is indeed the finite cocompletion in the conventional sense:
The  category $\widehat{\cat{A}}$
 has the universal property that a linear functor $\cat{A}\to\cat{B}$ with finitely cocomplete target extends uniquely up to canonical isomorphism along $\iota _\cat{A}: \cat{A}\to\widehat{\cat{A}}$ to a right exact  functor $\widehat{\cat{A}}\to\cat{B}$ (a linear functor preserving finite colimits). 
 If $\cat{A}$ has only one object whose endomorphism algebra is $A$, the finite cocompletion is
 the category of finite-dimensional right $A$-modules.
 \end{remark}

 We call a linear category $\cat{A}$ \emph{pre-finite} if its category of right modules is a finite category (the definition of a finite category
 was recalled on page~\pageref{finitelinearpage}).

\begin{lemma}\label{Lem: finite cat}
An object $\cat{A}$ of $\Bimod$ is pre-finite if and only if it is equivalent (in $\Bimod$ this means: Morita equivalent) to a linear category $BA$ with one object and finite-dimensional endomorphism algebra $A$ via finite-dimensional bimodules. 	
\end{lemma}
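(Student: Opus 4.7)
The plan is to reduce both directions to classical Morita theory, while carefully tracking finite-dimensionality throughout. The key input, already highlighted in Remark~\ref{Rem: free finite}, is that $\widehat{\cat{A}}$ is the free \emph{finite} cocompletion of $\cat{A}$, so every object of $\widehat{\cat{A}}$ is a finite colimit of representables. For any finite-dimensional algebra $A$, the category $\widehat{BA}$ is exactly the category of finite-dimensional right $A$-modules, which is manifestly a finite linear category.

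For the backward implication, suppose we are given finite-dimensional bimodules $M\colon \cat{A}\to BA$ and $N\colon BA\to \cat{A}$ witnessing a Morita equivalence in $\Bimod$. I would first observe that any finite-dimensional bimodule $M$ induces a functor $\widehat{M}\colon \widehat{\cat{A}}\to \widehat{BA}$ by $F\mapsto F\otimes_{\cat{A}} M = \int^{a\in\cat{A}} F(a)\otimes M(a,*)$: since $F$ is a finite colimit of representables, this coend reduces by Yoneda to a finite colimit of finite-dimensional vector spaces, and hence genuinely lies in $\widehat{BA}$. Because composition of 1-morphisms in $\Bimod$ is given by the same coend formula, the supposed Morita equivalence upgrades directly to an equivalence $\widehat{\cat{A}}\simeq \widehat{BA}\simeq \text{mod-}A$ of linear categories, which is finite. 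Hence $\cat{A}$ is pre-finite.

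For the forward implication, assume $\widehat{\cat{A}}$ is a finite linear category. I would choose a projective generator $P\in \widehat{\cat{A}}$ and set $A:=\mathrm{End}_{\widehat{\cat{A}}}(P)^{\opp}$, which is finite-dimensional because $\widehat{\cat{A}}$ has finite-dimensional hom spaces. Classical Morita theory then supplies an equivalence $\widehat{\cat{A}}\simeq \text{mod-}A = \widehat{BA}$ given by $X\mapsto \mathrm{Hom}_{\widehat{\cat{A}}}(P,X)$. To promote this to a Morita equivalence in $\Bimod$ between $\cat{A}$ and $BA$ themselves, I would produce the inverse pair of bimodules directly from $P$: for $N\colon BA\to \cat{A}$ I take the presheaf $P$ viewed as a functor $BA\otimes \cat{A}^{\opp}\to\vect$ via the left $A$-action coming from $A=\mathrm{End}(P)^{\opp}$, and for $M\colon \cat{A}\to BA$ I take $(a,*)\mapsto \mathrm{Hom}_{\widehat{\cat{A}}}(P,\iota_\cat{A}(a))$ with its natural $A$-action by precomposition. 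Both bimodules are finite-dimensional because their values are hom spaces in the finite category $\widehat{\cat{A}}$. The required identities $N\otimes_{BA} M\cong \mathrm{id}_\cat{A}$ and $M\otimes_\cat{A} N\cong \mathrm{id}_{BA}$ then unfold, via the coend formula and the Yoneda lemma, to the classical Morita isomorphisms $\mathrm{Hom}(P,-)\otimes_A P(-)\cong (-)$ and $P\otimes_A \mathrm{Hom}(P,-)\cong (-)$ associated to the generator $P$.

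The main obstacle here is organisational rather than conceptual: one has to carefully track the variances of bimodules as encoded in Definition~\ref{defbimod} and consistently translate between the $\Vect$-valued setup used there and the $\vect$-valued finite cocompletion $\widehat{(-)}$. Finite-dimensionality is however forced at each step by either the finiteness hypothesis on $\widehat{\cat{A}}$ (in the forward direction) or by the finite-dimensionality assumption on the given Morita bimodules (in the backward direction), combined throughout with the fundamental property that every object of $\widehat{\cat{A}}$ is a finite colimit of representables.
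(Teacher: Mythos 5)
Your proof is correct and follows essentially the same strategy as the paper's: the backward direction uses that finite-dimensional bimodules preserve finite-dimensionality of presheaves under the coend formula (yielding $\widehat{\cat{A}}\simeq \text{mod-}A$), and the forward direction builds the Morita equivalence from a projective generator $P\in\widehat{\cat{A}}$ with $A=\End(P)^{\opp}$. The paper's proof is terser (it only exhibits one of the two inverse bimodules and defers the coend bookkeeping to a citation), but the underlying ideas coincide.
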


\begin{proof}
Let us assume that there is a finite-dimensional algebra $A$ and a Morita equivalence 
${}_{A}M_\cat{A}$ from $BA$ to $\mathcal{A}$ (an equivalence in $\Bimod$). It is standard that $M$ induces an equivalence between 
the cocompletions $\Cat_k(BA,\Vect)\simeq \Cat_k(\mathcal{A}, \Vect)$ which sends a right $A$-module 
$N_A$ to the $\cat{A}$-module $N_A \otimes_A ({}_A M_\cat{A})$, see e.g.~\cite[Section~1.1]{skeinfin}. This map preserves the class of finite-dimensional modules because $M$ is finite-dimensional by assumption and hence also induces an 
equivalence between the finite cocompletions. 

Conversely, assume that $\cat{A}$ is pre-finite. We pick a projective generator $P\in \widehat{\cat{A}}$ and consider the $\cat{A}$-$B\End(P)$-bimodule $\widehat{\cat{A}}(\iota_{\cat{A}}(-),P)$, which is a  Morita equivalence via a finite-dimensional bimodule. 
\end{proof}

\begin{definition}
	We define $\Bimod^\catf{f}\subset \Bimod$ as the subcategory
	whose objects are pre-finite 
	linear categories, with bimodules taking values in finite-dimensional vector spaces as 1-morphisms and all 2-morphisms in $\Bimod$ between those 1-morphisms.
\end{definition}    
Note that the composition in $\Bimodf$ is well-defined thanks to Lemma~\ref{Lem: finite cat}. 
\begin{definition}
	We denote by $\Rexf$ the bicategory of finite categories, right exact 
 functors and natural transformations. The monoidal product is the Deligne product and the monoidal unit is the category $\vect$ of finite-dimensional vector spaces.    
\end{definition}
There is a natural symmetric monoidal functor $\Proj(-)\colon \Rexf \to \Bimodf$ sending a finite category to its subcategory of projective objects and a functor $F\colon \Ca\to \cat{D}$ to the bimodule
\begin{align}
\Proj(F)\colon \Proj(\Ca)\otimes \Proj(\cat{D})^{\opp} & \to \vect \\ 
c\otimes d &\longmapsto \cat{D}(d,F(c))  \ \ .
\end{align} 
The assignment
$\widehat{-}$ from Definition~\ref{defAhut}
can be extended to a functor $\Bimodf \to \Rexf$ by sending a 
bimodule $M\colon \cat{A}\otimes \cat{B}^\opp \to \vect$ to the functor 
\begin{align}\label{Eq: Def on bimod}
	\widehat{M} \colon \widehat{\cat{A}}=\Cat_k(\cat{A}^{\opp}, \vect) & \longrightarrow \widehat{\cat{B}}=\Cat_k(\cat{B}^{\opp}, \vect) \\
(	H\colon \cat{A}^{\opp}\to \vect )& \longmapsto \widehat{M}(H)(-)\coloneqq \int^{a\in \cat{A}} M(a,-)\otimes H(a) \ \ .
\end{align}
To show that this functor is well-defined, we need to verify that 
$\int^{a\in \cat{A}} M(a,-)\otimes H(a)$ is a finite-dimensional right $\cat{B}$-module.  Indeed, when composing with the Morita equivalences $B A_{\cat{A}} \simeq \cat{A}$ and $  B A_{\cat{B}}\simeq \cat{B}$ from Lemma~\ref{Lem: finite cat}, the functor 
$H \mapsto \int^{a\in \cat{A}} M(a,-)\otimes H(a)$
corresponds to tensoring with a finite-dimensional $A_{\cat{A}}$-$A_{\cat{B}}$-bimodule. Therefore, it is evident that this preserves  finite-dimensional modules.       

\begin{proposition}\label{Prop: Equivalence Bimod Rexf}
	The pair of functors
	\begin{equation}
		\begin{tikzcd}
			\Bimod^\catf{f}  \ar[rr, "\widehat{-}", bend left=10] &  \simeq  & \ar[ll, "\Proj(-)", bend left=10] \Rexf 
		\end{tikzcd} 
	\end{equation}
	is a pair of inverse symmetric monoidal equivalences of symmetric monoidal bicategories.
\end{proposition}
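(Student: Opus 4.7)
The plan is to show the two functors are well-defined and inverse to each other at the levels of objects, 1-morphisms and 2-morphisms, and then to upgrade this equivalence to a symmetric monoidal one. Since the functor $\widehat{-}$ is defined in the statement preceding the proposition, and $\Proj(-)$ is a straightforward assignment, most of the work is to build the unit and counit of the equivalence.

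First I would verify the essential-image statement at the level of objects. Given a pre-finite linear category $\cat{A}$, Lemma~\ref{Lem: finite cat} produces a finite-dimensional algebra $A$ with a Morita equivalence $\cat{A} \simeq BA$, so that $\widehat{\cat{A}}$ is equivalent to the category of finite-dimensional right $A$-modules. By Yoneda, the representable presheaves are projective in $\widehat{\cat{A}}$, and every projective object of $\widehat{\cat{A}}$ is a direct summand of a finite direct sum of representables. One checks that the Yoneda embedding $\iota_\cat{A}:\cat{A}\to\widehat{\cat{A}}$ followed by the inclusion $\Proj(\widehat{\cat{A}})\hookrightarrow \widehat{\cat{A}}$ factors through an equivalence $\cat{A} \simeq \Proj(\widehat{\cat{A}})$ \emph{in $\Bimodf$}, because Morita-equivalence identifies a category with its Cauchy completion. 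Conversely, for a finite category $\cat{D}$, one uses a projective generator $P\in\cat{D}$ and the equivalence $\cat{D}\simeq A\text{-mod}$ with $A=\End(P)^{\opp}$ to see that every object of $\cat{D}$ admits a presentation by projectives, hence lies in the finite cocompletion of $\Proj(\cat{D})$; this gives a right exact equivalence $\widehat{\Proj(\cat{D})}\simeq \cat{D}$, natural in $\cat{D}$.

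Next I would extend the identifications to 1-morphisms and 2-morphisms. A bimodule $M:\cat{A}\otimes\cat{B}^{\opp}\to\vect$ gives a right exact functor $\widehat{M}:\widehat{\cat{A}}\to\widehat{\cat{B}}$ via the coend formula~\eqref{Eq: Def on bimod}, and conversely a right exact functor $F:\cat{C}\to\cat{D}$ gives the bimodule $\Proj(F)(c,d)=\cat{D}(d,F(c))$. To see these are mutually inverse, I would trace through the composition: under the identifications of the previous paragraph, applying $\Proj$ to $\widehat{M}$ recovers $M$ because $\widehat{M}(\iota_\cat{A}(a))(b)\cong M(a,b)$ (this is a standard Yoneda/coend computation), while applying $\widehat{-}$ to $\Proj(F)$ recovers $F$ on representables and hence, by right exactness and the projective presentation, on all of $\widehat{\Proj(\cat{C})}\simeq \cat{C}$. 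The extension to 2-morphisms is immediate since both sides are naturally identified with the set of natural transformations between the corresponding bimodules/functors, and no additional coend computation is needed.

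Finally I would upgrade to a symmetric monoidal equivalence. The monoidal units correspond under $\widehat{1_k}\simeq \vect$ and $\Proj(\vect)\simeq 1_k$. For the monoidal products, the universal property of the Deligne product gives a right exact functor $\widehat{\cat{A}}\boxtimes \widehat{\cat{B}}\to\widehat{\cat{A}\otimes\cat{B}}$ extending the canonical inclusion $\iota_\cat{A}\otimes\iota_\cat{B}:\cat{A}\otimes\cat{B}\hookrightarrow \widehat{\cat{A}\otimes\cat{B}}$, and the fact that the Deligne product of categories of modules over finite-dimensional algebras is the category of modules over the tensor product algebra shows this is an equivalence. Symmetry is preserved because the naïve tensor product of bimodules and the Deligne product of functors are both symmetric via the swap, and the Yoneda/coend identifications are natural. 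The main obstacle is the coherence bookkeeping for the symmetric monoidal structure of a bicategory; however, since both $\Bimodf$ and $\Rexf$ arise as full sub-bicategories of well-studied symmetric monoidal bicategories (and the comparison is governed entirely by Yoneda and the universal property of the Deligne product as a finite cocompletion of a tensor product), the coherence data on one side transports to the other once the equivalence is established on underlying objects, 1-morphisms and 2-morphisms. This completes the proof.
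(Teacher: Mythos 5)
Your proposal is correct and follows essentially the same route as the paper: both directions of the equivalence are established via the Yoneda embedding, the Agreement Principle (coends over projectives computing coends over the whole category), Morita theory identifying a pre-finite category with its Cauchy completion, and the Deligne-product compatibility of $\widehat{-}$. The only substantive stylistic difference is in the direction $\Proj(-)\circ\widehat{-}\simeq\id$, where the paper explicitly constructs the unit and counit bimodules ($\ev$ and $\iota$) and verifies the zigzag identities by coend computations, whereas you invoke Morita equivalence with the Cauchy completion as a black box; both are valid and amount to the same underlying argument.
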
  
	
\begin{proof}
	It is standard that $\widehat{-}$ sends the na\"ive monoidal product of linear categories to the Deligne product, 
	see e.g.~\cite[Example~11 \& Remark~12]{Franco2013}. Therefore, $\widehat{-}$ is a symmetric monoidal functor between symmetric monoidal bicategories.
	
The composition $\widehat{-} \circ \Proj(-)$ is naturally equivalent to the identity via the restricted Yoneda embedding; more precisely, the component at
$\cat{C}\in\Rexf$ is given by
\begin{align}
\cat{C} \ra{\text{Yoneda embedding}} \widehat{\cat{C}} \ra{\text{restriction}} \widehat{\Proj\cat{C}} \ . \label{eqncompositionyonres}
\end{align}
Note that for this natural transformation between functors between bicategories, the naturality squares do not commute strictly; there is coherence data involved. Since it is given by the `obvious' isomorphisms, we suppress this data here.

We need to prove that~\eqref{eqncompositionyonres} is an equivalence. First we observe that it is fully faithful:
For $X,Y \in \cat{C}$,
 the space of natural transformations from $\cat{C}(-,X)$ to $\cat{C}(-,Y)$, seen as functors $(\Proj\cat{C})^\opp \to \vect$
 can be calculated by
\begin{align}
	\Hom(  \cat{C}(-,X) , \cat{C}(-,Y)    )=\int_{P \in \cat{C}} \cat{C}(P,X)^* \otimes \cat{C}(P,Y)\cong \cat{C} \left(   \int^{P\in\Proj\cat{C}} \cat{C}(P,X) \otimes P , Y      \right) \ . 
	\end{align}     With the     Agreement Principle~\cite{mcarthy,keller}, see also the proof of Proposition~\ref{propcentralmonad}, we conclude that the map
$\int^{P\in\Proj\cat{C}} \cat{C}(P,X) \otimes P\to X$ is an isomorphism, which leaves us with 
\begin{align}\Hom(  \cat{C}(-,X) , \cat{C}(-,Y)    )\cong \cat{C}(X,Y)\ , \end{align} thereby proving full faithfulness.
 Hence, in order 
  to conclude that~\eqref{eqncompositionyonres} is an equivalence,
	 we only need to show that it is also essentially surjective. 
An arbitrary object $X$ in $\widehat{\Proj(\cat{C})}$ is a finite colimit of representable presheaves, see Remark~\ref{Rem: free finite}. This colimit can be represented by an object in the original category $\cat{C}$ since the colimit can be computed pointwise and since
$\colim_{i} \mathcal{C}(P,C_i) =  \mathcal{C}(P,\colim_{i} C_i) $ for all finite colimits and a projective object $P$.

Now consider the other composition $\Proj(-)  \circ \widehat{-} $. Let $\cat{A}\in \Bimodf$. 
Evaluation defines for us a $(\cat{A},\Proj (\widehat{\cat{A}}))$-bimodule
\begin{align}
	\Proj (\widehat{\cat{A}})\otimes \cat{A}^\opp & \to \vect \\ 
	F\otimes a & \longmapsto F(a)
\end{align} 
 and hence a morphism $\ev \colon \Proj(\widehat{\cat{A}}) \to \cat{A} $ in ${\Bimodf}$. We also have a morphism $\iota:  \cat{A} \to \Proj(\widehat{\cat{A}}) $ in $\Bimod$ given by the bimodule
 \begin{align}
 \iota \colon \cat{A}\otimes \Proj(\widehat{\cat{A}})^\opp  &\to \vect  \\ 
 a\otimes G &\mapsto \widehat{\cat{A}}(G,\cat{A}(-,a))
 \end{align}
 We compute their compositions: For $F,G\in \Proj (\widehat{\cat{A}})$,
\begin{align}
\iota \circ \ev (G,F) = \int^{a\in \cat{A} } \widehat{\cat{A}}(G,\cat{A}(-,a)) \otimes F(a) \cong  \widehat{\cat{A}}\left(G, \int^{a\in \cat{A} } \cat{A}(-,a)\otimes  F(a)\right) \cong \widehat{\cat{A}}(G,F)\ , 
\end{align}
where we use for the first isomorphism
 that $G$ is projective (thereby making
$\widehat{\cat{A}}(G,-)$ exact), and for the last step  the 
Yoneda lemma for functors $\cat{A}^\opp \to \vect$. For the other composition, we find with $a,a'\in \cat{A}$
\begin{align}
	  \ev \circ \iota (a,a') &= \int^{G \in \Proj{\widehat{\cat{A}}} } \widehat{\cat{A}}(G,\cat{A}(-,a)) \otimes G(a') \cong \int^{b \in \cat{A} } \widehat{\cat{A}}(\cat{A}(-,b),\cat{A}(-,a)) \otimes \cat{A}(a',b) \\
	  &\cong \int^{b \in \cat{A} } \cat{A}(b,a) \otimes \cat{A}(a' ,b) \cong  \cat{A}(a' ,a) \ , 
\end{align} 
where we used for the second isomorphism that it is enough to let the coend run over representable functors, again by the Agreement Principle~\cite{mcarthy,keller}. 
This finishes the proof that $\widehat{-}$ and $\Proj(-)$ are inverse equivalences.
\end{proof}	

For a pivotal finite tensor category $\cat{C}$, we want to 
post-compose the open-closed modular functor $\snc$
from Theorem~\ref{thmsnmf}
with the symmetric monoidal equivalence from Proposition~\ref{Prop: Equivalence Bimod Rexf}, of course after verifying that 
 the $\Bimod$-valued open-closed modular functor $\snc$
factors through $\Bimodf$. 
We then obtain	 the finitely freely cocompleted $\Rexf$-valued
string-net open-closed modular functor $\SNC$ whose value on the open interval and the circle we want to describe.
This will need some preparations:

\begin{lemma}\label{lemmacoeq}
	Let $\cat{C}$ be a pivotal finite tensor category and $X\in Z(\cat{C})$.
	Then
	\begin{equation}\label{eqncoequl}
		\begin{tikzcd}
			\displaystyle FUFU X \ar[r, shift left=2] \ar[r, shift right=2]
			& \displaystyle FUX \ar[r] & X\ .   
		\end{tikzcd}
	\end{equation}
	(with the maps coming from the monad structure on $Z=UF$, and the $Z$-algebra structure of $X$) is a coequalizer in $Z(\cat{C})$.  The coequalizer is split in $Z(\cat{C})$ if $X$ is projective.
\end{lemma}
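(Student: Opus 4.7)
The plan is to recognize \eqref{eqncoequl} as the canonical (Beck) presentation of a $Z$-algebra as a reflexive coequalizer of free algebras, transported to $Z(\cat{C})$ through the Day--Street equivalence $\catf{EM}(Z)\simeq Z(\cat{C})$. Under this equivalence the counit $\epsilon_X\colon FUX\to X$ of $F\dashv U$ corresponds to the $Z$-algebra structure map $\alpha_X:=U\epsilon_X\colon ZUX\to UX$, and the two parallel arrows of \eqref{eqncoequl} correspond, after applying $U$, to the multiplication $\mu_{UX}\colon ZZUX\to ZUX$ of the monad $Z$ and to $Z\alpha_X\colon ZZUX\to ZUX$, with cofork $\alpha_X$.

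For the first assertion I would verify that the image of \eqref{eqncoequl} under $U$ is a split coequalizer in $\cat{C}$: the algebra unit axiom gives $\alpha_X\circ\eta_{UX}=\id_{UX}$, the monad unit axiom gives $\mu_{UX}\circ\eta_{ZUX}=\id_{ZUX}$, and naturality of $\eta$ gives $Z\alpha_X\circ\eta_{ZUX}=\eta_{UX}\circ\alpha_X$; this is precisely the data of a split fork. Since split coequalizers are absolute and $U$ is monadic (essentially by definition of $\catf{EM}(Z)$), this $U$-split fork lifts uniquely to a coequalizer in $Z(\cat{C})$, giving \eqref{eqncoequl}.

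For the splitness when $X$ is projective I would proceed as follows. Since $\epsilon_X$ is a coequalizer it is a regular epi in $Z(\cat{C})$, so projectivity of $X$ produces a section $s\colon X\to FUX$ with $\epsilon_X\circ s=\id_X$. Setting $t:=FUs\colon FUX\to FUFUX$, naturality of $\epsilon$ applied to $s$ gives $\epsilon_{FUX}\circ t=s\circ\epsilon_X$, while applying $FU$ to $\epsilon_X\circ s=\id_X$ yields $FU\epsilon_X\circ t=\id_{FUX}$; together, $(s,t)$ constitutes the data of a split fork inside $Z(\cat{C})$.

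No serious obstacle is expected: both parts are direct applications of monad theory. If anything is delicate, it is in the first step the bookkeeping required to identify the two arrows of \eqref{eqncoequl} with the canonical Beck pair $(\mu_{UX},Z\alpha_X)$ under $U$, which reduces to the triangle identities for $F\dashv U$ together with the fact that the free $Z$-algebra $FUX$ carries its structure map by $\mu_{UX}=U\epsilon_{FUX}$.
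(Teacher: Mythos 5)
Your proof is correct and follows the same route as the paper: the first assertion is the standard canonical presentation of an Eilenberg–Moore algebra (the paper simply cites Borceux and Mac~Lane where you spell out the $U$-split fork argument), and for the second you use projectivity to get a section $s$ of $\epsilon_X$ exactly as the paper does, after which you make explicit the contracting homotopy $t=FUs$ and verify the split-fork identities — a useful elaboration, since the paper's phrase ``the exact sequence and therefore the coequalizer splits'' silently relies on precisely this choice of $t$ and the naturality of $\epsilon$ to pass from a mere section of $\epsilon_X$ to a genuine split (hence absolute) coequalizer.
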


\begin{proof}
	The Drinfeld center $Z(\cat{C})$ is the Eilenberg-Moore category of algebras over the central monad $Z=UF$ \cite{daystreet}. Now the statement that~\eqref{eqncoequl} is a coequalizer is just a standard Lemma about monads and their algebras, see e.g.\ \cite[Lemma~4.3.3]{borceux} or (the proof of) \cite[Section~VI.7., Theorem~1]{maclane}. 
	
	Let us now assume that $X$ is projective. 
	With $\partial$ being the difference of the two morphisms in~\eqref{eqncoequl}, we get an exact sequence
	\begin{align}
	FUFUX \ra{\partial} FUX \to X \to 0 \ . 
	\end{align}
	Since $X$ is projective, the exact sequence and therefore the coequalizer splits in $Z(\cat{C})$. 
\end{proof}

\begin{lemma}\label{lemmacompletion}
	For any pivotal finite tensor category $\cat{C}$,
	a linear functor 
	$F(\Proj\cat{C}) \to \vect$
	defined on the subcategory $F(\Proj\cat{C})\subset \Proj Z(\cat{C})$ 
	uniquely extends to $\Proj Z(\cat{C})$. In particular, the finite cocompletion of $F(\Proj\cat{C})$ is equivalent to $Z(\cat{C})$. 
\end{lemma}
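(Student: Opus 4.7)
The plan is to exploit the split coequalizer presentation from Lemma~\ref{lemmacoeq}: every projective $X \in Z(\cat{C})$ sits in a split coequalizer diagram
\begin{equation}
\begin{tikzcd} FUFUX \ar[r, shift left=2] \ar[r, shift right=2] & FUX \ar[r] & X \end{tikzcd}
\end{equation}
in $Z(\cat{C})$, whose two leftmost terms lie in $F(\Proj\cat{C})$. Indeed, $U: Z(\cat{C}) \to \cat{C}$ preserves projective objects (as in the proof of Lemma~\ref{lemmarestrictmonad}), so $UX, UFUX \in \Proj \cat{C}$, and hence $FUX, FUFUX \in F(\Proj\cat{C})$. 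Since split coequalizers are absolute, this exhibits every object of $\Proj Z(\cat{C})$ as a canonical colimit of objects in $F(\Proj\cat{C})$ that is preserved by \emph{every} linear functor.

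First I would construct the extension. Given a linear functor $G: F(\Proj\cat{C}) \to \vect$, set
\begin{equation}
\tilde G(X) := \operatorname{coeq}\bigl( G(FUFUX) \rightrightarrows G(FUX) \bigr) \in \vect
\end{equation}
for $X \in \Proj Z(\cat{C})$. A morphism $\phi: X \to Y$ in $\Proj Z(\cat{C})$ induces the morphism $(FU\phi, FUFU\phi)$ of the underlying parallel-arrow diagrams, which in turn induces a morphism $\tilde G(X) \to \tilde G(Y)$ between the coequalizers in $\vect$; this yields functoriality of $\tilde G$. For $X = FP$ with $P \in \Proj \cat{C}$, the split coequalizer of Lemma~\ref{lemmacoeq} is preserved by $G$, so $\tilde G(FP) \cong G(FP)$ naturally, which confirms that $\tilde G$ is indeed an extension of $G$ along the inclusion $F(\Proj\cat{C}) \hookrightarrow \Proj Z(\cat{C})$.

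Uniqueness is forced by the same observation: any extension $\tilde G': \Proj Z(\cat{C}) \to \vect$ of $G$ must preserve the absolute split coequalizer, and therefore $\tilde G'(X)$ is canonically isomorphic to $\tilde G(X)$. This establishes the first claim. For the second claim, applying the same argument to presheaves shows that the restriction functor
\begin{equation}
\widehat{\Proj Z(\cat{C})} = \Cat_k(\Proj Z(\cat{C})^\opp, \vect) \longrightarrow \Cat_k(F(\Proj\cat{C})^\opp, \vect) = \widehat{F(\Proj\cat{C})}
\end{equation}
is an equivalence. Combining with $\widehat{\Proj Z(\cat{C})} \simeq Z(\cat{C})$, which follows from the natural equivalence $\widehat{-} \circ \Proj(-) \simeq \id_{\Rexf}$ established in the proof of Proposition~\ref{Prop: Equivalence Bimod Rexf} (note that $Z(\cat{C}) \in \Rexf$), yields $\widehat{F(\Proj\cat{C})} \simeq Z(\cat{C})$, as claimed.

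The only step demanding care is the functoriality of $\tilde G$, which rests on the fact that the parallel arrows $FUFU \rightrightarrows FU$ appearing in Lemma~\ref{lemmacoeq} are components of natural transformations of monads; given this, the argument is essentially a density / absolute-colimit Kan extension argument and presents no genuine obstacle.
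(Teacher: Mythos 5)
Your proof is correct and follows essentially the same route as the paper: both use the split coequalizer presentation of Lemma~\ref{lemmacoeq}, the absoluteness of split coequalizers to obtain existence and uniqueness of the extension, and Proposition~\ref{Prop: Equivalence Bimod Rexf} to identify the finite cocompletion of $\Proj Z(\cat{C})$ with $Z(\cat{C})$. You merely spell out the Kan-extension-style construction and functoriality in more detail than the paper, which just invokes absoluteness.
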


\begin{proof}
	Lemma~\ref{lemmacoeq} that tells us in particular that any projective object in $Z(\cat{C})$ is a split coequalizer of objects in $F(\Proj\cat{C})$. Hence, any projective object in $Z(\cat{C})$ is an absolute finite colimit of objects in $F(\Proj\cat{C})$, i.e.\ a finite colimit preserved \emph{by all functors}. This implies the existence of unique extension. 
	Therefore, $F(\Proj\cat{C})$ and $\Proj Z(\cat{C})$ have the same finite cocompletion.
	 Finally, we use that the finite cocompletion 
	  of $\Proj Z(\cat{C})$ is $Z(\cat{C})$ by Proposition~\ref{Prop: Equivalence Bimod Rexf}.
\end{proof}

\begin{theorem}\label{thmlexmf}
	Let $\cat{C}$ be a pivotal finite tensor category.
	After free finite cocompletion,
	the open-closed modular functor $\snc$
	from Theorem~\ref{thmsnmf} yields an open-closed modular functor in $\Rexf$
	that we denote by $\SNC$.
	The $\Rexf$-valued modular functor $\SNC$ 
	associates $\cat{C}$ to $[0,1]$ and $Z(\cat{C})$ to $\mathbb{S}^1$ in the sense that the 
	equivalence $h:\snc(\mathbb{S}^1) \ra{\simeq} F(\Proj\cat{C})$
	from Theorem~\ref{thmfunctorH} induces after finite 
	cocompletion an equivalence
	\begin{align}
	H:\SNC(\mathbb{S}^1)\ra{\simeq}
	Z(\cat{C}) \ . 
	\end{align}
\end{theorem}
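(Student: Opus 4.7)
The plan is to verify that the $\Bimod$-valued open-closed modular functor $\snc$ from Theorem~\ref{thmsnmf} takes values in the sub-bicategory $\Bimodf \subset \Bimod$, and then to post-compose with the symmetric monoidal equivalence $\widehat{-}\colon\Bimodf \ra{\simeq} \Rexf$ from Proposition~\ref{Prop: Equivalence Bimod Rexf}. Since $\widehat{-}$ is a symmetric monoidal equivalence of bicategories, the modular functor structure on $\snc$ (the mapping class group actions via $2$-isomorphisms from Corollary~\ref{cormcg} and the gluing compatibility from the excision Theorem~\ref{thmexcision}) transports formally to the cocompleted functor $\SNC$.

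The bulk of the work is then to check pre-finiteness of the boundary categories. By the symmetric monoidality of $\snc$ on one-dimensional manifolds (Remark~\ref{remsncsymmon}) combined with the fact that $\widehat{-}$ sends the naive monoidal product to the Deligne product, it suffices to check pre-finiteness on $[0,1]$ and $\mathbb{S}^1$. For the interval, Remark~\ref{remsncint} gives $\snc([0,1]) \simeq \Proj\cat{C}$, so Proposition~\ref{Prop: Equivalence Bimod Rexf} yields $\widehat{\snc([0,1])}\simeq \widehat{\Proj\cat{C}} \simeq \cat{C}$, which is finite by assumption. This simultaneously identifies the value of $\SNC$ on $[0,1]$ with $\cat{C}$.

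For the circle, I would invoke Theorem~\ref{thmfunctorH} to obtain the fully faithful embedding $h\colon\snc(\mathbb{S}^1)\ra{\simeq} F(\Proj\cat{C})\subset \Proj Z(\cat{C})$, and then Lemma~\ref{lemmacompletion} to identify the finite cocompletion of $F(\Proj\cat{C})$ with $Z(\cat{C})$. The universal property of finite cocompletion from Remark~\ref{Rem: free finite} then produces the unique right exact extension $H\colon\SNC(\mathbb{S}^1)\to Z(\cat{C})$ of the composition of $h$ with the inclusion into $Z(\cat{C})$; this is an equivalence because both $\SNC(\mathbb{S}^1)$ and $Z(\cat{C})$ are finite cocompletions of $F(\Proj\cat{C})$. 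In particular, $\snc(\mathbb{S}^1)$ is pre-finite.

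The main obstacle has in fact already been overcome in the preceding sections: the nontrivial mathematical content lies in Theorem~\ref{thmfunctorH} (the string-net description of the central monad and its Kleisli category) together with the splitting argument of Lemma~\ref{lemmacompletion} (which ensures that all projective objects of $Z(\cat{C})$, not only those of the form $FP$, are absolute colimits of objects in $F(\Proj\cat{C})$ and hence reached after finite cocompletion). With these results in hand, the proof of Theorem~\ref{thmlexmf} becomes an essentially formal assembly, and I expect no further difficulties beyond careful bookkeeping of the coherence data induced by the symmetric monoidal equivalence $\widehat{-}$.
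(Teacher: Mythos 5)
Your proposal follows essentially the same route as the paper: check that $\snc$ factors through $\Bimodf$, then post-compose with the symmetric monoidal equivalence $\widehat{-}\colon\Bimodf \ra{\simeq} \Rexf$ from Proposition~\ref{Prop: Equivalence Bimod Rexf}, and identify the values on $[0,1]$ and $\mathbb{S}^1$ using Remark~\ref{remsncint}, Theorem~\ref{thmfunctorH} and Lemma~\ref{lemmacompletion}.

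However, there is one gap: you only verify that the \emph{objects} hit by $\snc$ (the boundary categories) are pre-finite, but to conclude that $\snc$ factors through $\Bimodf$ you also need to check that the \emph{1-morphisms} land in $\Bimodf$. By Definition~\ref{defbimod} and the definition of $\Bimodf$, the 1-morphisms of $\Bimodf$ are required to be bimodules taking values in \emph{finite-dimensional} vector spaces, and this is an extra condition that does not follow from pre-finiteness of the source and target (a bimodule between two copies of $1_k$ is just a vector space, which may be infinite-dimensional). Concretely, for a surface $\Sigma$ you must verify that the bimodule $\snc(\Sigma)\colon\snc(\partial\Sigma)\to\Vect$ takes values in $\vect$, i.e.\ that each string-net space $\snc(\Sigma;B)$ is finite-dimensional. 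This is precisely the content of Corollary~\ref{corsnfinite}, which the paper invokes at this point and which your proposal never mentions; without it the claim that ``the modular functor structure transports formally'' does not yet apply, because the composition with $\widehat{-}$ is not defined on a 1-morphism of $\Bimod$ that fails to lie in $\Bimodf$.
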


\begin{proof}
	The $\Bimod$-valued 
	open-closed modular functor $\snc$ from Theorem~\ref{thmsnmf}
	factors through 
	$\Bimod^\catf{f}$. This can be seen as follows:
	\begin{itemize}
		\item 
		
		The categories associated by $\snc$ to one-dimensional manifolds 
		are finite after finite cocompletion. Indeed, 
		for the category $\snc([0,1])\simeq \Proj \cat{C}$ associated to the interval (Remark~\ref{remsncint}), this is trivial. 
		On $\mathbb{S}^1$,
		we obtain $\snc(\mathbb{S}^1)\simeq F(\Proj\cat{C})$ 
		by Theorem~\ref{thmfunctorH}. After finite free cocompletion, this yields $Z(\cat{C})$ by Lemma~\ref{lemmacompletion}.
		The category $Z(\cat{C})$ is 
		finite~\cite[Theorem~3.34]{etingofostrik}.
		
		\item The bimodules associated to surfaces take values in finite-dimensional vector spaces. 
		This is a consequence of Corollary~\ref{corsnfinite}.
	\end{itemize}
	Now we apply the equivalence 
	from Proposition~\ref{Prop: Equivalence Bimod Rexf} to get a $\Rexf$-valued open-closed modular functor.
\end{proof}

	\section{The Swiss-Cheese algebra underlying the string-net construction\label{secswisscheese}}
	This section is devoted the following improvement of Theorem~\ref{thmlexmf}:

	\begin{theorem}\label{thmHbraided}
		For any pivotal finite tensor category $\cat{C}$, the equivalence
			\begin{align} H : \SNC(\mathbb{S}^1)\ra{\simeq}
			Z(\cat{C}) \  
		\end{align}
		from Theorem~\ref{thmfunctorH} comes naturally with a braided monoidal structure. 
		Here the braided monoidal structure 
		on $\SNC(\mathbb{S}^1)$ is the one that it has by virtue of being the value of a modular functor on the circle
		while the braided monoidal structure on $Z(\cat{C})$ is the standard one.
		\end{theorem}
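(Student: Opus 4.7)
The plan is to exhibit the open-closed modular functor $\SNC$, restricted to genus zero surfaces, as a categorical Swiss-Cheese algebra, and then to invoke Idrissi's classification \cite{najib} to extract a braided monoidal functor $G \colon \SNC(\mathbb{S}^1) \to Z(\cat{C})$ which we identify with the equivalence $H$ of Theorem~\ref{thmfunctorH}. Since Idrissi's theorem automatically equips the target with the canonical braiding on the Drinfeld center and the source with the $E_2$-structure arising from circle insertions into a bigger disk --- which is exactly the braided monoidal structure that $\SNC(\mathbb{S}^1)$ carries by virtue of being the circle value of the modular functor --- this will equip $H$ with the asserted braided monoidal structure.

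First, I would assemble the Swiss-Cheese algebra by evaluating $\SNC$ on configurations of finitely many little open half-disks on the boundary and little closed disks in the interior of a larger half-disk. Isotopy invariance of string-nets (Proposition~\ref{propisotopy}) together with the mapping class group equivariance from Corollary~\ref{cormcg} promote this assignment to an algebra over the (categorified) Swiss-Cheese operad, while excision (Theorem~\ref{thmexcision}) supplies the operadic composition. The identifications $\SNC([0,1]) \simeq \cat{C}$ and $\SNC(\mathbb{S}^1) \simeq Z(\cat{C})$ (Remark~\ref{remsncint}, Proposition~\ref{Prop: Equivalence Bimod Rexf}, and Theorem~\ref{thmlexmf}) fix the colors of the algebra as $\cat{C}$ with its pivotal monoidal product in the open color and $\SNC(\mathbb{S}^1)$ with its modular braided structure in the closed color.

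Idrissi's theorem then produces a braided monoidal functor $G \colon \SNC(\mathbb{S}^1) \to Z(\cat{C})$. The remaining task is to identify $G$ with $H$. It is enough to carry out this identification on the essential image $F(\Proj\cat{C}) \simeq \snc(\mathbb{S}^1)$ before finite cocompletion. By Theorem~\ref{thmfunctorH}, the pre-cocompleted functor $h$ is uniquely characterized by the conditions $h\ell \cong f$ and $uh \cong r$, where $\ell \dashv r$ is the Kleisli adjunction from Proposition~\ref{propcentralmonad} and $f \dashv u$ is the restricted free-forgetful adjunction. Under the Swiss-Cheese interpretation, $\ell$ is realized topologically by placing an open half-disk inside a closed disk, forcing $G\ell$ to agree with the central free functor $f$; symmetrically, $r$ arises from embedding a closed disk inside a strip, so that $uG \cong r$. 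By the uniqueness statement of Theorem~\ref{thmfunctorH}, this yields $G \cong h$ on $\snc(\mathbb{S}^1)$, and hence $G \cong H$ after finite cocompletion.

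The main obstacle will be the first step: verifying rigorously that the genus zero part of $\SNC$ satisfies the Swiss-Cheese operadic axioms in the precise form required by \cite{najib}. While gluing and isotopy invariance are readily available, the coherences enforcing the central structure of the closed color's action on the open color need to be traced back to isotopies that move closed disks around open ones within a larger configuration. Once this is in place, the remaining steps follow formally from Idrissi's theorem combined with the uniqueness statement in Theorem~\ref{thmfunctorH}.
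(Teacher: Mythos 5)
Your high-level strategy coincides with the paper's: restrict the open-closed modular functor $\SNC$ to genus zero to get a categorical Swiss-Cheese algebra, invoke Idrissi's classification to obtain a braided monoidal functor $B \colon \SNC(\mathbb{S}^1) \to Z(\cat{C})$, and then identify the underlying functor of $B$ with $H$ so that $H$ inherits the braided monoidal structure. The paper also proves exactly this proposition, and the cocompletion/uniqueness step at the end is as you describe.

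However, your execution of the identification $B \cong H$ has a gap, and you misplace where the actual work lies. Building the Swiss-Cheese algebra from the open-closed modular functor is not the hard part — this is essentially a restriction of structure (the framed Swiss-Cheese operad is the genus-zero part of the open-closed surface operad, and the paper dispenses with this in one sentence). The real content is the identification of $B$ with $H$, and here your argument is not sound as stated: you claim that $\ell$ is realized topologically by ``placing an open half-disk inside a closed disk,'' and that this ``forces'' $B\ell \cong f$. But there is no Swiss-Cheese operation with a closed output color — every Swiss-Cheese operation outputs the open color — so neither $\ell \colon \Proj\cat{C} \to \snc(\mathbb{S}^1)$ nor the free functor $f \colon \Proj\cat{C} \to U\text{-}\Proj Z(\cat{C})$ appears as part of the Swiss-Cheese structure, and the desired compatibility does not follow formally. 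What the paper does instead is compute $UB$ explicitly: by Idrissi, $UB$ is evaluation on the annulus with one incoming boundary circle and one outgoing boundary interval; excision (Theorem~\ref{thmexcision}) and the coend manipulations from the proof of Proposition~\ref{propcentralmonad} then identify $UBP$ with $zP = UFP$. To lift this to a statement about $B$ itself, the paper computes the half-braiding on $BP$ by tracing, through excision, the isomorphism induced by the diffeomorphism that moves a marked point on $\mathbb{S}^1 \times \{1\}$ past the cut line, and verifies that it is precisely the \emph{non-crossing} half-braiding defining $FP$. Only then is one in a position to invoke the uniqueness of $h$ from Theorem~\ref{thmfunctorH} and of the right-exact extension. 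So your conditions $B\ell \cong f$ and $uB \cong r$ are the correct things to check, but checking them is not a formal consequence of Idrissi's theorem; it requires the explicit coend and half-braiding computation, which is precisely the argument your proposal skips.
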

	
	A direct proof of this improvement seems relatively hard, and we will choose an indirect approach based on Idrissi's characterization of categorical Swiss-Cheese algebras in~\cite{najib}. 
	Swiss-Cheese algebras will not be needed elsewhere in the article, and the reader willing to accept Theorem~\ref{thmHbraided} can skip the rest of this section.
	
	We do not want to go too far into the operadic details (and luckily, we do not have to). For us, it is enough to briefly discuss the Swiss-Cheese operad
	 that was defined by Voronov~\cite{voronov}. The framed Swiss-Cheese operad~\cite[Section 3.4.1]{KM} is formed by genus zero open-closed surfaces (no cyclic structure for the operad is included for the moment). 
	It has two colors, one for the boundary interval, and one for the boundary circle. Voronov's Swiss-Cheese operad is the suboperad with the same colors, but the Dehn twist on the cylinder excluded from its operations.  
	So once we look at a Swiss-Cheese algebra in categories (or certain types of linear categories), we obtain a category $\cat{A}$ for the boundary interval and a category $\cat{B}$ for the boundary circle. Idrissi~\cite[Theorem~A]{najib}
	 proved that the operations on $\cat{A}$ and $\cat{B}$ are exactly the following ones:
	
	\begin{itemize}
		
		\item The category $\cat{A}$ is monoidal (this is  the evaluation on the `open part').
		
		\item The category $\cat{B}$ is braided (this the evaluation on the `closed part').
		
		\item There is a  braided monoidal functor $F:         \cat{B}\to Z(\cat{A})$. 
		
		\end{itemize}

	The relation to our present situation is the following: We know 
	from Theorem~\ref{thmlexmf}
	that string-nets for a pivotal finite tensor category
	$\cat{C}$
	 produce an open-closed 
	modular functor. 
	Clearly, we can restrict an open-closed modular functor to open-closed genus zero surfaces.
	We then obtain the underlying Swiss-Cheese algebra
	\begin{align} (\SNC([0,1])\simeq \cat{C} , \  \SNC(\mathbb{S}^1) ,\  B: \SNC(\mathbb{S}^1)\to Z(\cat{C}) ) 
	\end{align}
	in $\Rexf$. 
	To the boundary interval, it associates $\cat{C}$; in fact, not only as linear category, but as monoidal category because
	the multiplication comes just from a disk with three marked boundary intervals. One can see with the help of Lemma~\ref{lemmasndisks} that the topologically inherited monoidal structure on $\cat{C}$ is then really the original one.
	Extracting the braided category associated to the closed boundary
	 is way harder: We know that we associate $\SNC(\mathbb{S}^1)$ by definition, but with some a priori unknown braided monoidal structure coming from the evaluation on genus zero surfaces. 
	However, thanks to~\cite[Theorem~A]{najib}, we know that we get a braided monoidal functor $B: \SNC(\mathbb{S}^1)\to Z(\cat{C})$.

	In order to get the braided monoidal structure on the functor $H:\SNC(\mathbb{S}^1)\to Z(\cat{C})$
	in Theorem~\ref{thmHbraided},
	it suffices
	to prove $H\cong B$ as functors:

	\begin{proposition}
	For a pivotal finite tensor category $\cat{C}$, denote by
	\begin{align}
		(\cat{C} , \SNC(\mathbb{S}^1) ,  B: \SNC(\mathbb{S}^1)\to Z(\cat{C}) )
	\end{align}
	the Swiss-Cheese algebra underlying the open-closed modular functor given by the string-net construction applied to $\Proj\cat{C}$.
	Then the underlying functor of $B$ is naturally isomorphic to the functor $H : \SNC(\mathbb{S}^1)\to Z(\cat{C})$ 
	from Theorem~\ref{thmfunctorH}.
	\end{proposition}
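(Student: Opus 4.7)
The plan is to verify that $B$ and $H$ agree on the subcategory $\snc(\mathbb{S}^1) \hookrightarrow \SNC(\mathbb{S}^1)$ of projective boundary labels, and then to conclude on all of $\SNC(\mathbb{S}^1)$ by the universal property of the finite cocompletion (Lemma~\ref{lemmacompletion}). The task naturally splits into matching, first, the underlying object in $\cat{C}$ via the forgetful functor $U \colon Z(\cat{C}) \to \cat{C}$, and then separately matching the half-braiding so that the identification lives in $Z(\cat{C})$.

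First I would identify $U \circ B \colon \SNC(\mathbb{S}^1) \to \cat{C}$ geometrically. In the Swiss-Cheese structure, the forgetful functor is induced by the essentially unique operation with one closed input and one open output, which is realized by the cobordism $\Sigma_0$ obtained from an annulus whose inner boundary is a closed boundary circle and whose outer boundary carries a single open interval (the rest being free). For a projective $P \in \Proj \cat{C} \subset \snc(\mathbb{S}^1)$, I would compute $\snc(\Sigma_0; P^\vee, Q)$ for $Q \in \Proj \cat{C}$ by cutting $\Sigma_0$ along a radial arc into a disk and applying excision (Theorem~\ref{thmexcision}) together with Lemma~\ref{lemmasndisks}. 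This is essentially the calculation already performed in the proof of Proposition~\ref{propcentralmonad} and yields $\cat{C}(z(P), Q)$, so $(U \circ B)(\ell(P)) \cong z(P) \cong u(h(\ell(P)))$ naturally in $P$.

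Next I would upgrade this isomorphism in $\cat{C}$ to one in $Z(\cat{C})$, which amounts to comparing two half-braidings on $z(P)$. The half-braiding on $B(\ell(P))$ evaluated at $W \in \cat{C}$ is induced by the Swiss-Cheese operation in which one closed input (labeled $P$) and one open input (labeled $W$) sit inside an open output half-disk, with the braiding realized by the operadic path that moves $W$ past $P$ along the straight boundary. The plan is to apply excision around the closed input, which opens $P$ into the coend $\int^{X \in \Proj \cat{C}} X^\vee \otimes P \otimes X$, and then check that the geometric path translates exactly into the picture of Figure~\ref{fignoncrossing}: the $W$-strand passes between $P$ and the two dummy $X$-strands without crossing either. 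This is, by definition, the non-crossing half-braiding on $F(P)$, so $B(\ell(P)) \cong F(P) = h(\ell(P))$ as objects of $Z(\cat{C})$, naturally in $P$.

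Having $B|_{\snc(\mathbb{S}^1)} \cong h$ as functors into $Z(\cat{C})$, the conclusion follows by universality: $\SNC(\mathbb{S}^1)$ is the free finite cocompletion of $\snc(\mathbb{S}^1)$, $Z(\cat{C})$ is the free finite cocompletion of $F(\Proj \cat{C})$ (Lemma~\ref{lemmacompletion}), and both $B$ and $H$ are right exact $1$-morphisms in $\Rexf$, so the isomorphism extends uniquely to a natural isomorphism $B \cong H$ on all of $\SNC(\mathbb{S}^1)$. The main obstacle will be the second step: reconciling the abstractly-defined operadic half-braiding with the concrete non-crossing half-braiding requires careful tracking of how the excision isomorphism interacts with Swiss-Cheese operad composition. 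The remaining steps amount to bookkeeping built on excision and the already-established computation of Proposition~\ref{propcentralmonad}.
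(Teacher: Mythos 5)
Your proposal follows essentially the same three-step route as the paper's proof: computing $UB$ on projective circle labels via excision to match the underlying functor with $z = UF$, identifying the Swiss-Cheese-induced half-braiding with the non-crossing half-braiding on $F(P)$ by tracking the geometric strand-move through the excision isomorphism, and then extending from $\snc(\mathbb{S}^1)$ to $\SNC(\mathbb{S}^1)$ by the universal property of finite cocompletion. The only difference is that you leave the central half-braiding calculation as a plan, whereas the paper carries it out explicitly on a triply-marked annulus and reads off that the relevant isomorphism reindexes the coend dummy variable to $Q = P \otimes Y$, which is precisely the non-crossing half-braiding; you correctly flag this as the main obstacle, so the proposal is sound.
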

	
	\begin{proof}
		It follows from the main result of \cite{najib} that $UB$ is obtained 
		by evaluation of $\SNC$ on the annulus with one incoming boundary circle and one outgoing boundary interval. From \eqref{eqnkleislieqn}, we can deduce that $UB$ sends a projective object placed on the circle to the object 
		$UFP\cong \int^{X\in\cat{C}} X^\vee \otimes P \otimes X$ in $\cat{C}$. 
		The lift $B$ of $UB$ to $Z(\cat{C})$ is obtained by extracting from the Swiss-Cheese algebra a half braiding for the object $UBP \cong \int^{X\in\cat{C}} X^\vee \otimes P \otimes X$. The way that this half braiding arises topologically is described in \cite{najib}. When specified to the case at hand, we obtain it as follows:
		Consider an annulus $A=\mathbb{S}^1 \times [0,1]$ with \begin{itemize}
			\item an incoming boundary interval on $\mathbb{S}^1 \times \{0\}$
		 labeled by $X\in \Proj\cat{C}$,
		 \item an incoming boundary interval on $\mathbb{S}^1 \times \{1\}$
		 labeled by $Y \in \Proj\cat{C}$,
		 \item and an outgoing boundary interval on $\mathbb{S}^1 \times \{1\}$ labeled by $V \in \Proj\cat{C}$.
		 \end{itemize}
		 
\begin{figure}[h]
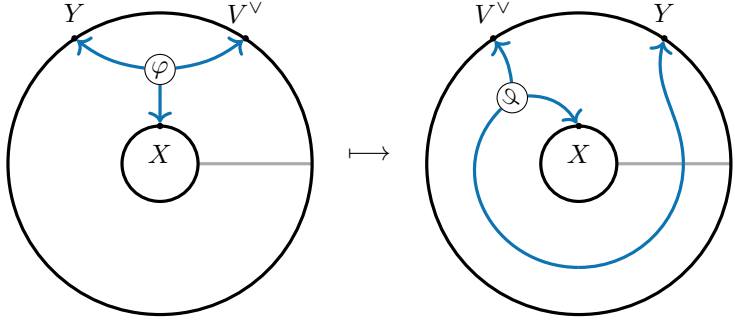

\[
\tikzfig{SwCh0}\quad\mapsto\quad\tikzfig{SwCh1}
\]
\caption{Here we suppress the intervals and label the points living on the outgoing boundary intervals by the dual of $V\in\Proj(\cat{C})$. The cut
	line is kept still while the diffeomorphism swaps the relative positions of marked points living on $\mathbb{S}^1\times \{1\}$. To facilitate the visualization, a string-net is included on both sides.}
	\label{fig: half braiding}
\end{figure}
		 The string-net construction $\snc$ assigns to $A$ with these labels the vector space
		 \begin{align} \int^{P \in \Proj\cat{C}}\cat{C}(I,  P^\vee \otimes X \otimes P \otimes Y \otimes V^\vee)&\cong  \int^{P \in \Proj\cat{C}}\cat{C}(V, P^\vee \otimes X \otimes P \otimes Y )
		 	\\&\cong \cat{C}\left( V, \int^{P \in \Proj \cat{C}}    P^\vee \otimes X \otimes P \otimes Y     \right)
		 	 \\&\cong \cat{C}\left( V, \int^{P \in \cat{C}}    P^\vee \otimes X \otimes P \otimes Y     \right)  \ . \label{eqnidentifications}
		 	\end{align}
		 	This follows from excision (Theorem~\ref{thmexcision}); the simplifications made here use again the arguments in the proof of Theorem~\ref{propcentralmonad}. 
		 	Through the diffeomorphism moving around $Y$ counterclockwise
		 	on $\mathbb{S}^1\times \{1\}$ past $X$, we obtain an isomorphism
		 	\begin{align}
		 	\tau :	\cat{C}\left( V, \int^{P \in \cat{C}}    P^\vee \otimes X \otimes P \otimes Y     \right) \ra{\cong} \cat{C}\left( V, Y \otimes \int^{Q \in \cat{C}}    Q^\vee \otimes X \otimes Q     \right) \ . \label{eqnmoveiso}
		 		\end{align}
		 		The resulting isomorphism \begin{align}\label{eqnhalfbraiding} \beta : \int^{P \in \cat{C}}    P^\vee \otimes X \otimes P \otimes Y  \ra{\cong} Y \otimes \int^{Q \in \cat{C}}    Q^\vee \otimes X \otimes Q \end{align} is the sought after half braiding on $\int^{P \in \cat{C}}    P^\vee \otimes X \otimes P$. We refer to Figure~\ref{fig: half braiding} for a sketch of the action of this isomorphism on string-nets.
		 		It remains to read off \eqref{eqnmoveiso} from the string-net construction and to translate it into an isomorphism of the form~\eqref{eqnhalfbraiding}: First note that the coend in~\eqref{eqnmoveiso} can be pulled out of the hom argument (see the calculations in~\eqref{eqnidentifications}). Therefore, we can see $\cat{C}\left( V,     P^\vee \otimes X \otimes P \otimes Y     \right)$ as the integrand of the coend on the left hand side of~\eqref{eqnmoveiso}, with the dummy variable being $P$. From the definition of the string-net construction and  excision (Theorem~\ref{thmexcision}), it follows that $\tau$ is induced by the map sending a morphism $\varphi : V \to P^\vee \otimes X \otimes P \otimes Y  $ to $b_Y \otimes \varphi : V \to Y \otimes Y^\vee \otimes P^\vee \otimes X \otimes P \otimes Y$; here $b_Y:I\to Y\otimes Y^\vee$ is the coevaluation of $Y$, and  the new dummy variable is $Q=P\otimes Y$. When translated to an isomorphism of the form~\eqref{eqnmoveiso}, this yields the non-crossing half braiding of  $\int^{P \in \cat{C}}    P^\vee \otimes X \otimes P$, see~Figure~\ref{fignoncrossing}. 
		 		Since the functor $F:\cat{C}\to Z(\cat{C})$ sends $X\in\cat{C}$ to $\int^{P \in \cat{C}}    P^\vee \otimes X \otimes P$ plus the non-crossing half braiding, it follows from this computation and the description of the functor $h$ in Theorem~\ref{thmfunctorH} that the composition
		 		\begin{align}
		 			\snc(\mathbb{S}^1) \subset \SNC(\mathbb{S}^1) \ra{B} Z(\cat{C})
		 			\end{align} agrees with
		 			\begin{align}
		 			\label{eqnsmallhext}	\snc(\mathbb{S}^1) \ra{h} F(\Proj\cat{C}) \subset Z(\cat{C}) \ . 
		 				\end{align}
		 			By the definition of the finite cocompletion there is, up to natural isomorphism, exactly one right exact functor $\SNC(\mathbb{S}^1)\to Z(\cat{C})$ extending~\eqref{eqnsmallhext}, namely $H$  (by its construction in Theorem~\ref{thmfunctorH}). This implies $B\cong H$. 
		\end{proof}

	\begin{remark}
		The strategy of the proof of 
		Theorem~\ref{thmHbraided} is informed by the examination of the algebraic structure underlying
		 bulk-boundary systems 
	in~\cite{fsv}. This does not actually simplify the proof, but it provides some context that might be helpful for the physically inclined reader.
		In~\cite[Section~3]{fsv} several algebraic properties of bulk-boundary conditions are postulated based on physical considerations, without explicit reference to the Swiss-Cheese operad.
		Nonetheless, the result is closely related to the structure of a categorical Swiss-Cheese algebra $(\cat{A},\cat{B}, F:\cat{B}\to Z(\cat{A}))$
		 from~\cite[Theorem~A]{najib}. The category $\cat{A}$ would then be called the \emph{category of boundary Wilson lines};
		 $\cat{B}$ is the \emph{category of bulk Wilson lines}. There is however one crucial difference: In \cite[Section~3]{fsv}, a class of boundary conditions is singled out by imposing
		  the requirement that the functor $F:\cat{B}\to Z(\cat{A})$ is not only braided monoidal, but also an equivalence, thereby providing a so-called \emph{Witt trivialization of $\cat{B}$}. 
		 It is important to note that this is not a property enforced by the Swiss-Cheese operad. 
		  The results of this section can be rephrased by saying that  $(\cat{C},\SNC(\mathbb{S}^1))$ is a mathematical incarnation
		 of a bulk-boundary system that actually has this additional property
		  postulated in~\cite{fsv}. 
		\end{remark}

	\spaceplease
	\section{The comparison theorem}
	
	In this section, we finally prove the main result of this article, namely the equivalence, in the sense of  \cite[Section~3.2]{brochierwoike}, between the string-net modular functor for a pivotal finite tensor category $\cat{C}$ and the Lyubashenko modular functor for the Drinfeld center $Z(\cat{C})$:

		\begin{theorem}\label{thmmain}
		For any pivotal finite tensor category $\cat{C}$, the string-net modular functor $\SNC$ associated 
		to $\cat{C}$ is equivalent to the Lyubashenko modular functor associated to the Drinfeld center $Z(\cat{C})$. 
	\end{theorem}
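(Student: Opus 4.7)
The plan is to invoke the factorization--homology classification of modular functors from \cite{brochierwoike}: an equivalence of $\Rexf$-valued modular functors is determined by an equivalence of the ribbon Grothendieck--Verdier categories that they produce in genus zero (i.e.\ on the circle). Accordingly, the proof reduces to comparing the ribbon Grothendieck--Verdier structure that $\SNC$ puts on $\SNC(\mathbb{S}^1)$, which is already identified with $Z(\cat{C})$ via the equivalence $H$ of Theorem~\ref{thmlexmf}, with the standard structure on $Z(\cat{C})$ underlying Lyubashenko's construction in the generalized sense of \cite[Section~8.4]{brochierwoike}.

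First I would handle the braided monoidal structure. Theorem~\ref{thmHbraided}, via Idrissi's characterization of categorical Swiss-Cheese algebras \cite{najib} applied to the open-closed string-net modular functor, already promotes $H \colon \SNC(\mathbb{S}^1) \ra{\simeq} Z(\cat{C})$ to an equivalence of braided monoidal categories. Next I would match the ribbon twists. The twist on $\SNC(\mathbb{S}^1)$ is inherited from the Dehn twist on the annulus, and under $H$ it should correspond to the canonical ribbon twist on $Z(\cat{C})$ coming from the pivotal structure of $\cat{C}$. By the string-net description of the central monad in Proposition~\ref{propcentralmonad}, this reduces to a direct graphical computation on the generating objects $FP$ for $P \in \Proj \cat{C}$, from which the equality extends uniquely to all of $Z(\cat{C})$ by right exactness (Lemma~\ref{lemmacompletion}).

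The main obstacle is identifying the Grothendieck--Verdier dualities. By Lemma~\ref{Lemma: or rev}, orientation reversal on $\mathbb{S}^1$ equips $\SNC(\mathbb{S}^1)$ with a non-degenerate pairing, concretely realized by the elbow pairing $\beta$. Under $H$ this transports to a pairing on $Z(\cat{C})$ which, crucially, need \emph{not} agree with the one coming from the rigid duality, as emphasized in Remark~\ref{remduality}; in the non-unimodular case it must be twisted by the distinguished invertible object. My plan is to compute the transported pairing by combining excision (Theorem~\ref{thmexcision}) with Proposition~\ref{propcentralmonad}, and to show that the resulting Grothendieck--Verdier dualizing object is precisely the distinguished invertible $\alpha \in \cat{C}$ lifted to $Z(\cat{C})$ via the pivotal structure and the Radford isomorphism~\eqref{eqnradford}. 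This agrees with the Grothendieck--Verdier structure built on $Z(\cat{C})$ in \cite[Section~8.4]{brochierwoike} out of the algebraic input of \cite{mwcenter}, and is consistent with the explicit form of the conformal blocks stated in Corollary~\ref{corconformalblocks}.

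Once the braided monoidal structure, the ribbon twist, and the Grothendieck--Verdier duality on $\SNC(\mathbb{S}^1)$ are matched with those on $Z(\cat{C})$, the classification theorem from \cite{brochierwoike} delivers the required equivalence of modular functors, completing the proof of Theorem~\ref{thmmain}.
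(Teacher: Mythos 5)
Your proposal follows the paper's overall strategy: reduce the comparison of modular functors to a comparison of the ribbon Grothendieck--Verdier structures on the circle categories via the classification result of \cite[Theorem~6.6]{brochierwoike}, establish the braided monoidal equivalence via the Swiss-Cheese analysis in Theorem~\ref{thmHbraided}, and match the balancing by a graphical computation using the central-monad and excision description (precisely the route taken in the proof of Theorem~\ref{thmcycfrE2}). The one divergence is the final step. You propose to directly compute the transported Grothendieck--Verdier duality and identify the dualizing object as $\alpha$ --- this is essentially the content of Lemma~\ref{lemmadistinv} and Remark~\ref{remduality} --- and then declare the ribbon Grothendieck--Verdier structures matched. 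The paper's proof of Theorem~\ref{thmcycfrE2} instead finishes by invoking \cite[Theorem~2.12]{mwcenter}: since $Z(\cat{C})$, and hence $\snc(\mathbb{S}^1)$, has trivial M\"uger center by \cite[Proposition~4.4]{eno-d}, the ribbon Grothendieck--Verdier structure on each side is unique relative to the underlying balanced braided structure, so a balanced braided equivalence is automatically an equivalence of ribbon Grothendieck--Verdier categories. This shortcut is not cosmetic: identifying the dualizing objects alone does not yet give an equivalence of ribbon Grothendieck--Verdier categories; one must also check that the duality functors, the representability isomorphisms and the compatibility $D\theta_X = \theta_{DX}$ are all preserved by $H$. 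Your direct computation leaves that verification open, whereas the uniqueness argument sidesteps it entirely. Adding the appeal to \cite[Theorem~2.12]{mwcenter} closes the gap, at which point your direct computation becomes an independent consistency check --- which is exactly how the paper frames Remark~\ref{remduality}.
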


As recalled in the introduction,
the Lyubashenko construction~\cite{lyubacmp,lyu,lyulex} is a modular functor construction that takes as an input a modular category, i.e.\ a finite ribbon category with non-degenerate braiding.
If the pivotal finite tensor category $\cat{C}$ is spherical in the sense of~\cite{dsps}, then $Z(\cat{C})$ is modular by \cite{shimizuribbon}, so that the Lyubashenko construction can be applied to $Z(\cat{C})$. 
If $\cat{C}$ is not spherical, one can still construct a modular functor from $Z(\cat{C})$ by \cite[Section~8.4]{brochierwoike}, even if $Z(\cat{C})$ is not modular.
We understand the Lyubashenko modular functor in Theorem~\ref{thmmain} in this generalized sense.
	
	A direct comparison at all genera  of the two types of modular functors 
	appearing in Theorem~\ref{thmmain}
	seems difficult, and we will therefore follow a indirect approach that uses the results of \cite{cyclic,brochierwoike}:
	By \cite[Theorem~7.17]{cyclic} the value of a modular functor on the circle
	 inherits through the evaluation of the modular functor on genus zero surfaces a ribbon Grothendieck-Verdier structure, a notion defined by Boyarchenko-Drinfeld~\cite{bd}. 
	A \emph{ribbon Grothendieck-Verdier category} in $\Rexf$ is \begin{itemize}
		\item a finite category $\cat{A}$ with right exact monoidal product $\otimes : \cat{A}\boxtimes \cat{A}\to \cat{A}$, braiding $c_{X,Y} : X \otimes Y \ra{\cong} Y \otimes X$ and balancing $\theta_X : X \ra{\cong}X$, i.e.\ a natural automorphism of the identity with $\theta_I=\id_I$ and $\theta_{X \otimes Y}=c_{Y,X}c_{X,Y}(\theta_X \otimes \theta_Y)$,
		
		\item an object $K$, called the \emph{dualizing object}, such that $\cat{A}(X\otimes - ,K)$ is representable for all $X\in \cat{A}$, i.e.\ $\cat{A}(X\otimes - ,K)\cong \cat{A}(-,DX)$ for some $DX\in \cat{A}$, such that $D:\cat{A}\to\cat{A}^\opp, X \mapsto DX$ is an equivalence and $D\theta_X = \theta_{DX}$.
		\end{itemize}
		The dualizing object $K$ can be recovered from $D$ via $K=DI$.

		By Theorem~\ref{thmlexmf} $\SNC$
		is a modular functor. By what was just explained, the evaluation on the circle (and genus zero surfaces)
		produces a ribbon Grothendieck-Verdier category.
		This ribbon Grothendieck-Verdier structure
		 will be explicitly calculated through our next result.

		 In order to state the result,
		 	recall from~\eqref{eqnradford}  the distinguished invertible object $\alpha \in \cat{C}$ of~\cite{eno-d}
		 	and the Radford isomorphism
		 $-^{\vee \vee\vee\vee}\cong \alpha \otimes-\otimes \alpha^{-1}$.
		 If $\cat{C}$ is pivotal, the square of the pivotal structure endows $\alpha$ with a half braiding, thereby promoting it to an object in the Drinfeld center $Z(\cat{C})$.
		 By~\cite{mwcenter}
		  this object then becomes the dualizing object for a ribbon Grothendieck-Verdier structure on $Z(\cat{C})$ that agrees with the unit of $Z(\cat{C})$ if and only if $\cat{C}$ is spherical.\label{mentionmwcenter}

		 \begin{theorem}\label{thmcycfrE2}
		 	For any pivotal finite tensor category, the functor $H: \SNC(\mathbb{S}^1)\to Z(\cat{C})$ 
		 	is an equivalence of ribbon Grothendieck-Verdier categories, where
		 	\begin{itemize}
		 		\item $\SNC(\mathbb{S}^1)$ carries the ribbon Grothendieck-Verdier structure that it inherits by virtue of being the value of a modular functor on the circle,

		 		\item and $Z(\cat{C})$ carries the ribbon Grothendieck-Verdier structure $D=-^\vee \otimes \alpha^{-1}$
		 		with the distinguished invertible object $\alpha$
		 		of $\cat{C}$ (equipped with a half braiding through the Radford isomorphism and the pivotal structure) as dualizing object in $Z(\cat{C})$. 
		 		
		 	\end{itemize} 
		 \end{theorem}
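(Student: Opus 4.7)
The plan is to verify each piece of ribbon Grothendieck-Verdier structure on $\SNC(\mathbb{S}^1)$ separately. By Theorem~\ref{thmlexmf} the functor $H$ is already a linear equivalence, so only the braided monoidal structure, the ribbon twist, and the Grothendieck-Verdier duality remain to be checked.

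The braided monoidal part is essentially Theorem~\ref{thmHbraided}: the Swiss-Cheese analysis shows that $H$ intertwines the braided monoidal structure inherited by $\SNC(\mathbb{S}^1)$ from the evaluation of $\SNC$ on genus-zero surfaces with the standard braided monoidal structure on $Z(\cat{C})$. For the ribbon twist, I would compute the action of the Dehn twist on the cylinder $\mathbb{S}^1 \times [0,1]$ on a generator $P \in \Proj\cat{C} \subset \snc(\mathbb{S}^1)$: geometrically this drags the marked point once around the cylinder, and tracking the resulting string-net through excision together with the non-crossing half braiding of Figure~\ref{fignoncrossing} should recover the canonical ribbon twist on $F(P) = \int^{X} X^\vee \otimes P \otimes X \in Z(\cat{C})$ built from the pivotal structure and the universal half braiding.

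For the Grothendieck-Verdier duality, the strategy is to use the general result of \cite{cyclic} that the GV structure on the circle category of a modular functor is encoded by the non-degenerate pairing obtained from evaluation of the modular functor on the cylinder seen as a cobordism $\mathbb{S}^1 \sqcup \overline{\mathbb{S}^1} \to \emptyset$. On the string-net side this pairing is the elbow $\beta$ from the paragraph before Theorem~\ref{thmsnmf}, namely $\kappa(P,Q) \cong \snc(\mathbb{S}^1)(P^\vee, Q)$ for $P,Q \in \Proj\cat{C}$, induced by the rigid duality of $\cat{C}$ on $\snc([0,1]) \simeq \Proj\cat{C}$. Transporting this through $H$ and invoking the Frobenius-type identity relating $F$, rigid duality in $\cat{C}$, and the distinguished invertible object --- a consequence of the Radford isomorphism and the results of \cite{shimizuunimodular} --- one rewrites the pairing so that it identifies $\alpha \in Z(\cat{C})$, endowed with the half braiding coming from the Radford isomorphism and the pivotal structure as on page~\pageref{mentionmwcenter}, as the dualizing object and produces the duality functor $D(-) = -^\vee \otimes \alpha^{-1}$.

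The main technical obstacle is this last step: the rigid duality on $\Proj\cat{C}$ transported through $H$ does not coincide with the rigid duality on $Z(\cat{C})$, and matching the two against a GV duality whose dualizing object is \emph{not} the unit requires careful application of the Shimizu-type Frobenius identity plus a check that the half braiding it produces on $\alpha$ is indeed the Radford one. Once this is in place, compatibility with the braided monoidal and ribbon structures already established amounts to naturality together with the cyclic operad axioms that define a ribbon Grothendieck-Verdier category in the sense of \cite{cyclic}.
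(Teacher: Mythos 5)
Your treatment of the braided monoidal structure (via Theorem~\ref{thmHbraided}) and of the ribbon twist (via the Dehn twist on the cylinder compared against the non-crossing half braiding) agrees with the paper's proof. For the Grothendieck-Verdier duality, however, you take a genuinely different route. You propose to \emph{directly} compute the duality by tracking the elbow pairing through $H$ and invoking a Frobenius-type identity relating $F$, rigid duality in $\cat{C}$, and $\alpha$. This is correct and is in fact carried out in the paper as Remark~\ref{remduality}, resting on Lemma~\ref{lemmadistinv} (which gives $F(-^\vee)\cong(F(\alpha\otimes -))^\vee$ by comparing $F$ against the right adjoint $G$ and using \cite[Lemma~4.7]{shimizuunimodular}) and on \cite[Lemma~2.1]{mwcenter} for the identification of the half braiding on $\alpha$ as the Radford one. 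But the paper's actual proof of Theorem~\ref{thmcycfrE2} sidesteps this computation entirely: once $H$ is known to be a \emph{balanced} braided monoidal equivalence, it invokes the uniqueness of a ribbon Grothendieck-Verdier structure relative to a fixed balanced braided structure, which holds by \cite[Theorem~2.12]{mwcenter} whenever the braiding is non-degenerate (trivial Müger center, which $Z(\cat{C})$ has by \cite[Proposition~4.4]{eno-d}). That uniqueness argument dispenses with the Frobenius identity, the check that the half braiding on $\alpha$ is the Radford one, and the compatibility $D\theta_X=\theta_{DX}$, which you would otherwise have to verify separately. So your route works, but requires more bookkeeping; the paper's route trades that bookkeeping for a citation to a uniqueness theorem. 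You may want to note the uniqueness shortcut, since it is what makes it unnecessary to unwind the duality directly in the main proof.
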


		 \begin{proof}
		 	By Theorem~\ref{thmHbraided} the functor $H$ is already a braided monoidal equivalence.
		 	
		 	Next we prove that $H$ also preserves the balancing. 
		 	To this end, we place a projective object $P\in\Proj\cat{C}$ on the circle and denote the corresponding object in $\SNC(\mathbb{S}^1)$ by $\widetilde P$. The component of the balancing in $\SNC(\mathbb{S}^1)$ at $\widetilde P$ is denoted by $\theta_{\widetilde P}^{\SNC}$. 
		 	By construction $H$ sends $\widetilde P$ to $FP \in Z(\cat{C})$. We denote the balancing on $Z(\cat{C})$ by $\theta$;
		 	this is the `standard' balancing that $Z(\cat{C})$ is endowed with because it is braided and pivotal.
		 	It now suffices to prove
		 	\begin{align}\label{eqnvergleichbal}
		 	H \theta_{\widetilde P}^{\SNC} = \theta_{F(P)}  
		 	\end{align}
		 	because this implies $H\theta^{\SNC}_X = \theta_{HX}$ for all $X\in \SNC(\mathbb{S}^1)$ (this is because any 
		 	 object in $\SNC(\mathbb{S}^1)$ and $Z(\cat{C})$ receives an epimorphism from an object of the form $\widetilde P$ and $F(P)$, respectively, see Lemma~\ref{lemmacoeq}).
		 	
		 	For the proof of~\eqref{eqnvergleichbal}, we compute both sides separately:
		 	 The balancing $\theta_{F(P)}:F(P)\to F(P)$
		 	  at $F(P)$, as a consequence of the adjunction  $F\dashv U$,
		 	  is completely determined by the map \begin{align}
		 	  	P\ra{\iota_I} UF(P)\xrightarrow{U\theta_{F(P)}} UF(P) \end{align} in $\cat{C}$
	 	  	(here $P\cong I^\vee \otimes P \otimes I \ra{\iota_I} UF(P)$ is the structure map of the coend)
		 	  given by:
		 	\begin{align}\label{twisteqn1}
\tikzfig{H0}\enspace=\quad\tikzfig{H1}=\quad\tikzfig{H2}\quad=\quad\tikzfig{H3}
\end{align}
This uses the naturality of the half braiding and the definition of the non-crossing half braiding (that $F(P)$ comes equipped with by definition, see~Figure~\ref{fignoncrossing}).
		 On the string-net side, 
		 we can read off the balancing topologically: The automorphism $\theta_{\widetilde P}^{\SNC}$ is the image of the identity at $\widetilde P$ under the Dehn twist along the waist of the cylinder.
		 Graphically, 
		 the balancing $\theta_{\widetilde P}^{\SNC}$
		 is given by the automorphism 
		 \begin{align}\label{twisteqn2}
\tikzfig{H4}
\end{align}
		 	in the category $\SNC(\mathbb{S}^1)$.
		 	Here the opposite vertical sides of the rectangle are identified.
		 	Under the isomorphism
		 	 from~\eqref{eqnexc3}, this automorphism
		 	in the category $\SNC(\mathbb{S}^1)$ agrees with the right hand side of~\eqref{twisteqn1}, i.e.\ with the `standard' balancing for the Drinfeld center computed before. 
		 	This establishes that $H:\SNC(\mathbb{S}^1) \to Z(\cat{C})$ is a balanced braided monoidal equivalence.
		 	
		 	It now follows that $H$ is also an equivalence of ribbon Grothendieck-Verdier categories because for the two categories in question the ribbon Grothendieck-Verdier structure is unique relative to the underlying 
		 	balanced braided structure as follows from \cite[Theorem~2.12]{mwcenter} because $Z(\cat{C})$, and therefore also $\snc(\mathbb{S}^1)$, has a non-degenerate braiding in the sense that they Müger center is trivial~\cite[Proposition~4.4]{eno-d}.
		 \end{proof}

		 \begin{remark}
		 	The Grothendieck-Verdier duality mentioned in the last paragraph of the previous proof is unique, but actually we can say even more: The space of choices is $Bk^\times$, the classifying space of the group of units of the field $k$. This follows from~\cite[Corollary~4.5]{mwcenter}. 
		 	\end{remark}

		 \begin{proof}[\slshape Proof of Theorem~\ref{thmmain}]
		 	By \cite[Theorem~6.6]{brochierwoike}
		 	two $\Rexf$-valued modular functors are equivalent if and only if
		 	the 
		 	underlying ribbon Grothendieck-Verdier categories are equivalent in the sense of \cite[Section~2.4]{cyclic}.
		 	The ribbon Grothendieck-Verdier category extracted from $\SNC$ is described in Theorem~\ref{thmcycfrE2}: 
		 	It agrees with $Z(\cat{C})$
		 	with the ribbon Grothendieck-Verdier structure from
		 	\cite{mwcenter}. 
		 	But this is by construction (see \cite[Section~8.4]{brochierwoike}) the ribbon Grothendieck-Verdier structure that the Lyubashenko construction for $Z(\cat{C})$ produces.
		 	This finishes the proof.
		 \end{proof}

	It is very instructive to compute the Grothendieck-Verdier duality on $Z(\cat{C})$ that corresponds to the topological Grothendieck-Verdier duality on $\SNC(\mathbb{S}^1)$ directly. For this, we need:
		
		\begin{lemma}\label{lemmadistinv}
			Let $\cat{C}$ be a pivotal
			finite tensor category. The left adjoint $F:\cat{C}\to Z(\cat{C})$ to the forgetful functor $U:Z(\cat{C})\to\cat{C}$ comes with a canonical isomorphism
			\begin{align}
			F(-^\vee) \cong (F(\alpha \otimes -))^\vee \ , 
			\end{align}
			where $\alpha$ is the distinguished invertible object of $\cat{C}$. 
		\end{lemma}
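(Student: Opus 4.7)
My plan is to identify both $F(X^\vee)$ and $F(\alpha\otimes X)^\vee$ (after dualizing) with the right adjoint of the forgetful functor $U\colon Z(\cat{C})\to\cat{C}$, and then combine the two identifications.

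First, I would show by a purely formal adjunction/rigidity manipulation that, writing $R$ for the right adjoint of $U$ (which exists by general categorical nonsense since $U$ is exact between finite categories), there is a canonical isomorphism $R(X)\cong F(X^\vee)^\vee$. The computation is: for any $Y\in Z(\cat{C})$, using that $\cat{C}$ is pivotal (with pivotal structure canonically lifting to $Z(\cat{C})$), that $U$ is monoidal and hence preserves duals, and the adjunction $F\dashv U$,
\begin{align}
	Z(\cat{C})\bigl(Y,F(X^\vee)^\vee\bigr)\cong Z(\cat{C})\bigl(F(X^\vee),Y^\vee\bigr)\cong \cat{C}\bigl(X^\vee,(UY)^\vee\bigr)\cong \cat{C}(UY,X),
\end{align}
and then conclude by Yoneda. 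Here the first isomorphism uses that $(-)^\vee$ is a contravariant equivalence on $Z(\cat{C})$ together with pivotality, while the last is pivotality in $\cat{C}$.

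Second, I would invoke Shimizu's Frobenius-type identification of the right adjoint of $U$ with a twist of its left adjoint by the distinguished invertible object: $R(X)\cong F(X\otimes\alpha)$. Pivotality of $\cat{C}$, through the Radford isomorphism $-^{\vee\vee\vee\vee}\cong\alpha\otimes(-)\otimes\alpha^{-1}$ combined with $(-)^{\vee\vee}\cong\id$, forces $\alpha$ to be naturally central, so I can rewrite this as $R(X)\cong F(\alpha\otimes X)$.

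Combining the two identifications yields the canonical isomorphism $F(X^\vee)^\vee\cong F(\alpha\otimes X)$, and dualizing once more (using $A\cong A^{\vee\vee}$ in $Z(\cat{C})$ thanks to the inherited pivotal structure) gives the stated $F(X^\vee)\cong F(\alpha\otimes X)^\vee$. The substantive input is the second step — Shimizu's identification of the right adjoint of $U$ with a twist of $F$ by $\alpha$ — which is the only place where the non-unimodularity of $\cat{C}$ enters in a non-formal way; I expect this to be the main obstacle, while the first and third steps are routine adjunction-plus-rigidity manipulations. An alternative to citing Shimizu would be to expand both sides via their coend presentations $F(Y)=\int^{T}T^\vee\otimes Y\otimes T$ and apply the coend-to-end duality twisted by $\alpha$ directly, but this amounts to the same input packaged differently.
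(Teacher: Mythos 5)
Your proof is correct and takes essentially the same route as the paper: both establish, via a short Yoneda/adjunction/rigidity chain, that $F(X^\vee)^\vee$ represents the right adjoint $G$ of $U$, and then invoke Shimizu's identification $G \cong F(\alpha\otimes -)$ (the paper cites~\cite[Lemma~4.7]{shimizuunimodular}) to conclude. The only cosmetic difference is that the paper carries out the Yoneda computation on $Z(\cat{C})(F(X^\vee),Y)$ directly, landing on $(GX)^\vee$, whereas you dualize first; this is the same argument.
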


		\begin{proof}
			With the right adjoint $G:\cat{C}\to Z(\cat{C})$ of the forgetful functor $U:Z(\cat{C})\to \cat{C}$, we find for $X \in \cat{C}$ and $Y\in Z(\cat{C})$,
			\begin{align}
			Z(\cat{C}) \left(  F\left(X^\vee\right) ,Y  \right)&\cong \cat{C}\left(X^\vee , UY\right)\\ & \cong \cat{C}\left(\left(UY\right)^\vee , X\right)\\ &\cong \cat{C}\left(U\left(Y^\vee\right) , X\right) \\&\cong Z(\cat{C}) \left(Y^\vee , GX\right) \\ &\cong Z(\cat{C}) \left(  (GX)^\vee , Y    \right)
			\end{align}
			and hence $F(-^\vee)\cong (G(-))^\vee$. Together with \cite[Lemma~4.7]{shimizuunimodular}, this implies
			$F(-^\vee) \cong (F(\alpha \otimes -))^\vee$.
		\end{proof}

	\begin{remark}[Direct comparison of the Grothendieck-Verdier dualities on
		$\SNC(\mathbb{S}^1)$ and $Z(\cat{C})$, independently	 of Theorem~\ref{thmcycfrE2}]\label{remduality}	
		We can directly prove using Lemma~\ref{lemmadistinv} that
		$H: \SNC(\mathbb{S}^1)\to Z(\cat{C})$ is compatible with the canonical Grothendieck-Verdier dualities that we have on both sides:
		The Grothendieck-Verdier duality on $\SNC(\mathbb{S}^1)$ is the topological one 
		 induced by orientation reversal which sends an object $P\in \Proj \cat{C}$ on the circle to $P^\vee$~\cite[Theorem~7.17]{cyclic}.  
		The corresponding duality functor $D$ on $Z(\cat{C})$ satisfies therefore $DFP=F(P^\vee)$. 
		We need to prove that this implies \begin{align}\label{eqnDalpha}
		DX=X^\vee \otimes \alpha^{-1}\quad \text{for all}\ X \in Z(\cat{C}) \ . 
		\end{align}
		Indeed, with Lemma~\ref{lemmadistinv} we find
		\begin{align}
		DFP=F(P^\vee)
		\cong (F(\alpha \otimes P))^\vee  \ . 
		\end{align}
		With the half braiding of $\alpha$ induced by the pivotal structure and the Radford isomorphism~\cite[Lemma~2.1]{mwcenter}, we can simplify further:
		\begin{align}
		DFP  \cong (\alpha \otimes FP)^\vee \cong  (FP)^\vee \otimes \alpha^{-1}\ . 
		\end{align}
		This proves~\eqref{eqnDalpha} on $F(\Proj \cat{C})$ and therefore on all objects because $F(\Proj \cat{C})$ generates $Z(\cat{C})$ under finite colimits (Lemma~\ref{lemmacompletion}) and $D$ is an equivalence. 
		\end{remark}

	\begin{remark}[Towards a derived generalization of the main result]\label{remdsn2}
	One could ask whether there is a relation between a derived version of the string-nets for $\cat{C}$, see Remark~\ref{remdsn1}, and the differential graded modular functor for $Z(\cat{C})$, as a special case of the construction in~\cite{dmf}.	This would be a derived generalization of Theorem~\ref{thmmain}. As plausible as such a generalization might seem, it would be somewhat involved because
	 basically none of the technical tools that we used in the linear setting, i.e.\ in this paper, are currently
	available in the derived setting.
		\end{remark}

	\spaceplease
	\section{Applications and examples	}
	If we spell out Theorem~\ref{thmmain}, we arrive at the following:

		\begin{corollary}\label{corconformalblocks}
	Let $\cat{C}$ be a pivotal finite tensor category and $\Sigma$ a surface with $n$ boundary components that are  labeled with $X_1 , \dots , X_n \in \cat{C}$.
	Then we can identify
	\begin{align}
	\SNC(\Sigma;X_1,\dots,X_n) \cong Z(\cat{C})  \left(  FX_1 \otimes \dots \otimes FX_n \otimes
	\left(\int_{X \in Z(\cat{C})} X \otimes X^\vee\right) ^{\otimes g} 
	, \alpha^{\otimes (g-1)}   \right) ^*    \ , 
	\end{align}
	where $F:\cat{C}\to Z(\cat{C})$ is left adjoint to the forgetful functor from $Z(\cat{C})$ to $\cat{C}$, and $\alpha$ is the distinguished invertible object of $\cat{C}$, seen as object in the Drinfeld center via the pivotal structure and the Radford isomorphism.
	This isomorphism intertwines  the $\Map(\Sigma)$-action if 	$\SNC(\Sigma;X_1,\dots,X_n)$ is equipped with the geometric action and the right hand side with the (generalized) Lyubashenko action.
	\end{corollary}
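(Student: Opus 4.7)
My plan is to deduce this corollary from the main comparison Theorem~\ref{thmmain} together with the explicit formula for the (generalized) Lyubashenko modular functor on a genus $g$ surface. First, by Theorem~\ref{thmmain}, the modular functor $\SNC$ is equivalent to the Lyubashenko modular functor $\mathfrak{L}_{Z(\cat{C})}$ for $Z(\cat{C})$. By Theorem~\ref{thmfunctorH} together with the finite cocompletion extension of Theorem~\ref{thmlexmf}, this equivalence acts on boundary circles by sending an object $X\in\cat{C}$, placed on the circle, to $FX\in Z(\cat{C})$. Consequently, boundary labels $X_1,\dots,X_n\in\cat{C}$ on the string-net side translate to labels $FX_1,\dots,FX_n\in Z(\cat{C})$ on the Lyubashenko side, and the $\Map(\Sigma)$-action is intertwined by construction of the equivalence of modular functors.

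The second step is to invoke the formula for the Lyubashenko modular functor on a closed genus $g$ surface with marked boundary components. In the classical modular setting, Lyubashenko's construction assigns to such a surface with labels $Y_1,\dots,Y_n$ the space $\cat{A}\spr{Y_1\otimes\dots\otimes Y_n\otimes \mathbb{L}^{\otimes g},I}^*$ where $\mathbb{L}$ is the canonical coend. In the Grothendieck-Verdier generalization from~\cite[Section~8.4]{brochierwoike}, which is the one relevant for $Z(\cat{C})$ since $\cat{C}$ need not be spherical, the monoidal unit on the right-hand side of the hom gets replaced by an appropriate tensor power of the dualizing object, and $\mathbb{L}$ gets replaced by the corresponding coend $\int^X X\otimes DX$ adapted to Grothendieck-Verdier duality. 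By Theorem~\ref{thmcycfrE2} (or equivalently the direct computation in Remark~\ref{remduality}), the Grothendieck-Verdier dualizing object on $Z(\cat{C})$ coming from the topological duality is exactly $\alpha$, with $D=-^\vee \otimes \alpha^{-1}$. Plugging in $\alpha$ for the dualizing object and substituting $DX=X^\vee\otimes\alpha^{-1}$ in the coend allows one to rewrite the Grothendieck-Verdier coend in the form $\int_{X\in Z(\cat{C})} X\otimes X^\vee$ (here the end appears after using the equivalence $D$ and the rigid dualities on the hom spaces to flip variance). The resulting formula is precisely the right-hand side of the claimed isomorphism, with the $\alpha^{\otimes(g-1)}$ on the target matching the overall power of the dualizing object that arises from the gluing formula (one factor of $\alpha$ per handle, minus one coming from the duality involved in closing off the surface).

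The main obstacle is the bookkeeping in the second paragraph: identifying the coend appearing naturally in the Grothendieck-Verdier Lyubashenko formula with the specific expression $\int_{X\in Z(\cat{C})} X\otimes X^\vee$, and tracking the exact power of $\alpha$. All necessary ingredients — the Lyubashenko construction for non-modular ribbon Grothendieck-Verdier categories, the computation of its value on marked surfaces, and the identification of the dualizing object on $Z(\cat{C})$ — are available in~\cite{brochierwoike,mwcenter} and in Theorem~\ref{thmcycfrE2}, so the work is essentially to assemble them and perform this identification; no new topological input beyond Theorem~\ref{thmmain} is required.
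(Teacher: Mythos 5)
Your proposal follows essentially the same route as the paper's proof: apply Theorem~\ref{thmmain} to reduce to the Lyubashenko conformal blocks of $Z(\cat{C})$ with labels $FX_i$, invoke the general formula for conformal blocks of a ribbon Grothendieck-Verdier category from \cite[Theorem~7.9]{cyclic} and its $\Rexf$-dual in \cite[Corollary~8.1]{brochierwoike}, and substitute $DX=X^\vee\otimes\alpha^{-1}$ from Theorem~\ref{thmcycfrE2}. One small imprecision worth flagging: the reference formula has simply $K=DI=\alpha^{-1}$ as the target (not a ``power of the dualizing object''), and the exponent $g-1$ arises only after extracting the $g$ copies of $\alpha^{-1}$ from the end $\int_{X}X\otimes DX=\bigl(\int_{X}X\otimes X^\vee\bigr)\otimes\alpha^{-1}$ and moving $\alpha^{-g}$ across the Hom using invertibility of $\alpha$; your parenthetical about ``using the equivalence $D$ and the rigid dualities on the hom spaces to flip variance'' is not actually needed, since the reference formula is already written as an end.
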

	
	\begin{remark}
		The object $\alpha^{\otimes (g-1)}$ can be replaced by the monoidal unit $I \in Z(\cat{C})$ if $\cat{C}$ is spherical. In fact, 
		$\alpha \cong I$ in $Z(\cat{C})$ if and only if $\cat{C}$ is spherical (see the explanations on page~\pageref{mentionmwcenter}).
		For $g=1$, the contribution $\alpha^{\otimes (g-1)} \cong I$ disappears automatically. 
		This is not a surprise because the space of conformal blocks for the torus (as vector space, without the mapping class group action) never sees more than the linear structure of the category~\cite[Corollary~6.6]{mwansular}.
	\end{remark}

	\begin{proof}[\slshape Proof of Corollary~\ref{corconformalblocks}]
		By Theorem~\ref{thmmain} 
		$\SNC(\Sigma;X_1,\dots,X_n)$
		is isomorphic to the space of conformal blocks for $Z(\cat{C})$ evaluated at $\Sigma$ with boundary labels $FX_1 , \dots, FX_n$. 
		The latter space of conformal blocks is calculated for a general ribbon Grothendieck-Verdier category in \cite[Theorem~7.9]{cyclic} in the $\Lexf$-valued case; the dual $\Rexf$-valued version is given in \cite[Corollary~8.1]{brochierwoike}. So far, this tells us
		\begin{align}
			\SNC(\Sigma;X_1,\dots,X_n) \cong Z(\cat{C})  \left(  FX_1 \otimes \dots \otimes FX_n \otimes
			\left(\int_{X \in Z(\cat{C})} X \otimes DX \right) ^{\otimes g} 
			, \alpha^{-1}   \right) ^*    \ . 
		\end{align} With $DX=X^\vee \otimes \alpha^{-1}$ (see Theorem~\ref{thmcycfrE2}), we obtain the desired result. 
	\end{proof}

	If we combine Theorem~\ref{thmmain} with \cite[Corollary~3.1]{mwcenter}, we obtain:
	\begin{corollary}\label{corollarysnsphere}
		For any pivotal finite tensor category $\cat{C}$,
		\begin{align}
			\SNC(\mathbb{S}^2) \cong \left\{ \begin{array}{cl} k \ , & \text{if}\ \cat{C} \ \text{is spherical,} \\
				0 \ , & \text{else.} \end{array}\right.
		\end{align}
	\end{corollary}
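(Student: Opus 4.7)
The plan is to derive this directly from the explicit formula in Corollary~\ref{corconformalblocks}, specialized to the case of a closed surface of genus zero with no boundary labels, and then invoke the cited result \cite[Corollary~3.1]{mwcenter} to handle the resulting hom space in $Z(\cat{C})$.

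First I would apply Corollary~\ref{corconformalblocks} to $\Sigma = \mathbb{S}^2$, taking $n=0$ and $g=0$. The empty tensor product is $I$, and $\alpha^{\otimes(g-1)}$ becomes $\alpha^{-1}$, so the formula collapses to
\begin{align}
\SNC(\mathbb{S}^2) \;\cong\; Z(\cat{C})(I, \alpha^{-1})^* \ .
\end{align}
Here $\alpha^{-1}$ is to be understood as the distinguished invertible object (with inverted label), promoted to an object of $Z(\cat{C})$ via the pivotal structure and the Radford isomorphism. This identification already intertwines the $\Map(\mathbb{S}^2)$-action, but that is immaterial for the statement.

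Next I would argue the dichotomy. Since $\alpha$ is an invertible object of $\cat{C}$, its image in $Z(\cat{C})$ (with the half braiding coming from the Radford isomorphism) is invertible there as well, hence simple; the same holds for $\alpha^{-1}$, and of course for $I$. Consequently the hom space $Z(\cat{C})(I, \alpha^{-1})$ is either zero or one-dimensional, and it is non-zero precisely when $\alpha \cong I$ in $Z(\cat{C})$. As recalled right before page~\pageref{mentionmwcenter}, this isomorphism characterizes sphericality of $\cat{C}$. Dualizing yields the desired formula. This is exactly the content of \cite[Corollary~3.1]{mwcenter}, which I would cite to package this last step cleanly.

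The only subtle point is to check that the $n=0$, $g=0$ specialization of Corollary~\ref{corconformalblocks} is indeed admissible and produces the right boundary-free formula; once that is in place the rest is essentially formal. No real obstacle beyond bookkeeping is expected, since Theorem~\ref{thmmain} (via Corollary~\ref{corconformalblocks}) already reduces the topological computation of $\SNC(\mathbb{S}^2)$ to an elementary homological calculation inside the Drinfeld center.
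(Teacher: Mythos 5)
Your proof is correct and takes essentially the same route as the paper, which simply combines Theorem~\ref{thmmain} with \cite[Corollary~3.1]{mwcenter}. You unpack that citation a bit more explicitly by passing through Corollary~\ref{corconformalblocks} at $n=0$, $g=0$ and then running the elementary dichotomy argument for $Z(\cat{C})(I,\alpha^{-1})$ using invertibility (hence simplicity) of $\alpha^{-1}$, but the substance is identical.
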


	\begin{example}\label{exsphH}
		Let $H$ be a finite-dimensional spherical Hopf algebra and $\Sigma$ a closed surface of genus $g$.
		Then
		\begin{align}
			\SN_{H\catf{-mod}}(\Sigma) \cong \Hom_{D(H)} (      D(H)_\text{adj}^{\otimes g} , k          )^* \ , 
		\end{align}
		where $D(H)$ is the Drinfeld double and $D(H)_\text{adj}$ is $D(H)$ equipped with the adjoint action.
		The object $D(H)_\text{adj}$ is the canonical end of $D(H)\catf{-mod}$, and since this category is unimodular~\cite[Proposition~4.5]{eno-d} the object becomes self-dual~\cite[Theorem~4.10]{shimizuunimodular}. 
		This implies \begin{align} \SN_{H\catf{-mod}}(\Sigma) \cong \Hom_{D(H)} (     k, D(H)_\text{adj}^{\otimes g}        )^* \ . 
		\end{align} 
		The vector space $\Hom_{D(H)}(k,D(H)_\text{adj})\cong \Hom_{D(H)}(D(H)_\text{adj},k)$ is the center $Z(D(H))$, or equivalently the space of class functions of $D(H)$.
		 Since $\Hom_{D(H)}(k,D(H)_\text{adj})^{\otimes g} \subset \Hom_{D(H)}(k,D(H)_\text{adj}^{\otimes g})$, we can see $Z(D(H))^{\otimes g}$ as a subspace of $\SN_{H\catf{-mod}}(\Sigma)^*$. In combination with
		\begin{align} \dim \Hom_{D(H)} (     k, D(H)_\text{adj}^{\otimes g}        ) \le \dim D(H)^{\otimes g} = n^{2g}\end{align} with $n:=\dim H$, this gives us
		\begin{align}
			\dim Z(D(H)) ^ g \le \dim \SN_{H\catf{-mod}}(\Sigma) \le n^{2 g} \ . 
		\end{align}
	\end{example}

	\begin{example}\label{exgroupG}
		For a finite group $G$, denote by ${\vectwok}_G^d$ the finite tensor category of finite-dimensional $G$-graded vector spaces, with the pivotal structure given by a group morphism $d:G \to k^\times$
		(one could twist the associator with a 3-cocycle on $G$, but we refrain from doing that here). We suppress the field from the notation.
		The description of the pivotal structure via a group morphism $d:G\to k^\times$
		is standard and can be found e.g.\ in \cite[Example~1.7.3]{TV}. For the considerations made here, this description will not be relevant. It suffices to know that $\vectwok_G^d$ is spherical if and only if $d^2=1$. 
		The modular functor for $Z( \vectwok_G^d    )$ is discussed 
		in \cite[Example~2.2, 2.13 \& 3.4]{mwcenter}. Thanks to Theorem~\ref{thmmain}, we know that it is equivalent to the string-net modular functor $\SN _{\vectwok_G^d}$, a fact that we can now exploit: If $G$ is abelian, it implies that $\SN _{\vectwok_G^d}(\Sigma)$ for a closed surface of genus $g$ is $|G|^{2g}$-dimensional if $d^{\chi(\Sigma)}=1$, with $\chi(\Sigma)=2-2g$ being the Euler characteristic of $\Sigma$. Otherwise, it is the zero vector space. 
		Hence, this modular functor is always non-trivial on the torus; this is clear because the space of conformal blocks of the torus must be the $k$-linear span of the $|G|^{2}$ many simple object of $Z( \vectwok_G^d    )$
		\cite[Corollary~6.6]{mwansular}.
		For every other closed surface of genus $g \neq 1$, we can choose appropriate input data to make the space of conformal blocks at genus $g$ zero while still maintaining an overall non-trivial modular functor:
		If $g=0$, we choose $G$ such that we can arrange $d^2 \neq 1$, thereby making $Z( \vectwok_G^d    )$ non-spherical
		(see Corollary~\ref{corollarysnsphere}).
		If $g\ge 2$, we choose $G=\mathbb{Z}_n = \mathbb{Z} / n\mathbb{Z}$ with $n=-\chi(\Sigma)+1 >0$, $k=\mathbb{C}$ and
		\begin{align}  d(\ell) = \exp \left( 
			\frac{2\pi \text{i} \ell}{n}\right) \quad \text{for}\quad \ell \in \mathbb{Z}_n\ . 
		\end{align}
		Then $d^{\chi(\Sigma)}(1)\neq 1$ and hence $\SN _{\vectwok_G^d}(\Sigma)=0$. 
	\end{example}

	One insight from Example~\ref{exgroupG} is the following:
	
	\begin{corollary}\label{corzero}
		Let $\Sigma$ be a closed surface of genus $g\neq 1$.
		Then there exists a pivotal finite tensor category $\cat{C}$ such that $\SNC(\Sigma)=0$, even though the modular functor $\SNC$ is still overall non-trivial, i.e.\ non-zero on some other closed surface.
	\end{corollary}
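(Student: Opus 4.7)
The plan is to exploit Example~\ref{exgroupG} directly: for a fixed closed surface $\Sigma$ of genus $g\neq 1$, I will select a group $G$ and a pivotal structure $d:G\to k^\times$ on $\vectwok_G^d$ tailored to $\Sigma$, so that the dimension formula from the example outputs zero on $\Sigma$ while remaining non-zero on the torus. The verification splits into the two cases $g=0$ and $g\geq 2$, both of which are covered by that example.

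For the sphere case ($g=0$), the plan is to pick any finite group $G$ (say $G=\mathbb{Z}_3$) together with a pivotal structure $d:G\to k^\times$ satisfying $d^2\neq 1$. Then $\vectwok_G^d$ is non-spherical, so Corollary~\ref{corollarysnsphere} yields $\SN_{\vectwok_G^d}(\mathbb{S}^2)=0$. Non-triviality on another surface is then immediate: evaluating on the torus produces a vector space whose dimension equals the number of simple objects of the Drinfeld center $Z(\vectwok_G^d)$, i.e.\ $|G|^2>0$, using the torus formula referenced in Example~\ref{exgroupG} (which is itself a consequence of Corollary~\ref{corconformalblocks} applied at $g=1$ so that the $\alpha^{\otimes(g-1)}$ factor disappears).

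For the higher-genus case ($g\geq 2$), the plan is to choose $G=\mathbb{Z}_n$ with $n=2g-1$, $k=\mathbb{C}$, and $d(\ell)=\exp(2\pi\ii\ell/n)$. Then $\chi(\Sigma)=2-2g=-(n-1)$, and a direct computation gives $d^{\chi(\Sigma)}(1)=\exp(-2\pi\ii(n-1)/n)\neq 1$ since $n\geq 3$. By the dimension formula of Example~\ref{exgroupG} (obtained from Corollary~\ref{corconformalblocks} together with the explicit description of $Z(\vectwok_G^d)$), this forces $\SN_{\vectwok_G^d}(\Sigma)=0$. As in the previous case, non-triviality on the torus follows from $|G|^2>0$.

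There is no real obstacle here; the content of the corollary is already distilled in Example~\ref{exgroupG}. The only things to double-check are arithmetic (that $d^{\chi(\Sigma)}\neq 1$ in the higher-genus construction) and the logical structure: in each case one exhibits a single category $\cat{C}=\vectwok_G^d$ (depending on $\Sigma$) such that $\SNC(\Sigma)=0$ while $\SNC(\mathbb{T}^2)\neq 0$, which is precisely the assertion. The fact that such $G$ and $d$ must in particular make $\vectwok_G^d$ non-spherical (when $g=0$) or non-unimodular after twisting by $d^{\chi(\Sigma)}$ (when $g\geq 2$) is what lets the string-net space vanish without collapsing the whole modular functor.
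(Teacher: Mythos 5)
Your argument is correct and follows exactly the paper's own reasoning, which simply points to Example~\ref{exgroupG}; your choice $n=2g-1$ agrees with the paper's $n=-\chi(\Sigma)+1$, and the arithmetic $d^{\chi(\Sigma)}(1)=\exp(-2\pi\ii(n-1)/n)\neq1$ checks out. One minor inaccuracy in your closing remark: $\vectwok_G^d$ is always unimodular as a finite tensor category, so it does not become ``non-unimodular after twisting''; what the condition $d^{\chi(\Sigma)}\neq1$ actually guarantees is that $\alpha^{\otimes(g-1)}$ is a non-trivial object of $Z(\vectwok_G^d)$ --- here $\alpha\cong I$ as an object of $\cat{C}$, but the half braiding it acquires from the pivotal structure is non-trivial, and that is what forces the hom space in Corollary~\ref{corconformalblocks} to vanish.
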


	Generally, modular functors are defined over an extension of the modular surface operad.
	For the string-nets, this extension is obviously not needed because the mapping class group acts directly; in other words, the string-net modular functor is \emph{anomaly-free}.
	Thanks to Theorem~\ref{thmmain},
	the same is true for the Lyubashenko modular functor for a Drinfeld center $Z(\cat{C})$.
	Moreover, the modular functor for $Z(\cat{C})$ extends in fact an open-closed modular functor simply because this is true for the string-net modular functor (Theorem~\ref{thmlexmf}). Let us summarize this:
	
	\begin{corollary}\label{coranomalyopenclosed}
		For a pivotal finite tensor category $\cat{C}$, the Lyubashenko modular functor for the Drinfeld center
		$Z(\cat{C})$ \begin{pnum}\item is anomaly-free \item and 
		extends to an open-closed modular functor sending the open boundary to $\cat{C}$.\label{coranomalyopenclosedii} \end{pnum}
		\end{corollary}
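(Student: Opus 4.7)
The plan is to transport both properties from the string-net side (where they are built in by construction) to the Lyubashenko side via the equivalence of modular functors established in Theorem~\ref{thmmain}. Since two equivalent modular functors share the same structural invariants (target bicategory of values, anomaly behaviour, extension properties at the level of the underlying modular-operadic data), it will suffice to observe that $\SNC$ enjoys both properties and then invoke Theorem~\ref{thmmain}.

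For part (i), I would recall that a modular functor is called \emph{anomaly-free} when its genus-labelled mapping-class-group 2-isomorphisms factor through the mapping class groups themselves rather than through a central extension; equivalently, the modular functor is defined on the modular surface operad rather than on a genuine extension thereof. For $\SNC$ this is manifest from the construction: $\sn_{\cat{C}}(\Sigma)$ is defined as a colimit of vector spaces indexed by admissible embedded labelled graphs, and the full diffeomorphism group $\Diff(\Sigma)$ acts by pushing forward labelled graphs; by Proposition~\ref{propisotopy}, this action factors through $\pi_0 \Diff(\Sigma) = \Map(\Sigma)$ with no central extension appearing, as recorded in Corollary~\ref{cormcg}. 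Free finite cocompletion (Section~\ref{freecomcompletion}) preserves this because the cocompletion functor $\Proj(-) \colon \Rexf \to \Bimodf$ is a symmetric monoidal equivalence, so the $\Rexf$-valued modular functor $\SNC$ from Theorem~\ref{thmlexmf} inherits the strict (non-projective) mapping class group action. Transporting along the equivalence of Theorem~\ref{thmmain} deduces the same for the Lyubashenko modular functor for $Z(\cat{C})$.

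For part (ii), the open-closed extension is already in hand: Theorem~\ref{thmsnmf} produces an open-closed modular functor $\snc$ in $\Bimod$, and Theorem~\ref{thmlexmf} upgrades it to an $\Rexf$-valued open-closed modular functor $\SNC$ whose value on the interval $[0,1]$ is the finite cocompletion of $\snc([0,1]) \simeq \Proj \cat{C}$, i.e. $\cat{C}$ itself. Restricting this open-closed modular functor to the closed sub-operad gives a closed modular functor that by Theorem~\ref{thmmain} is equivalent to the Lyubashenko modular functor for $Z(\cat{C})$. Hence $\SNC$ is the desired open-closed extension, with $\cat{C}$ associated to the open boundary.

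I do not anticipate a substantive obstacle; the non-trivial work (excision, identification of $\SNC(\mathbb{S}^1)$ with $Z(\cat{C})$ as ribbon Grothendieck-Verdier categories, and the comparison with Lyubashenko via the classification of~\cite{brochierwoike}) has all been carried out in the preceding sections. The only point that requires care is making sure that the notion of equivalence of modular functors from \cite[Section~3.2]{brochierwoike} used in Theorem~\ref{thmmain} indeed preserves anomaly-freeness and the open-closed extension property; this is however immediate from the operadic definition, since an equivalence of modular algebras over the (open-closed) modular surface operad preserves both the domain operad and the genuine (non-projective) character of the mapping class group action.
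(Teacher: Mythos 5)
Your argument coincides with the paper's own: both properties are observed to hold for the string-net modular functor by construction (direct $\Map(\Sigma)$-action from Corollary~\ref{cormcg}, open-closed structure from Theorems~\ref{thmsnmf} and~\ref{thmlexmf}) and are then transported to the Lyubashenko modular functor for $Z(\cat{C})$ via the equivalence of Theorem~\ref{thmmain}. Your version is more explicit about what ``anomaly-free'' means and why equivalences of modular functors preserve both properties, but the route is essentially identical to the paper's.
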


	If $\cat{C}$ is a non-spherical fusion category (in particular, $\cat{C}$ is then semisimple), then the string-net construction discussed in this paper agrees with the one given by Runkel in~\cite{Non-s-string-nets}. When we apply Theorem~\ref{thmmain} to this special case, we find:

	\begin{corollary}\label{corsnr} The string-nets for a non-spherical fusion category $\cat{C}$
		defined by Runkel extend to an open-closed modular functor that sends 
			 the circle to $Z(\cat{C})$, but with a non-rigid Grothendieck-Verdier duality in which $\alpha$ is the dualizing object.
		\end{corollary}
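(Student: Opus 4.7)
The plan is to reduce this to a direct application of the main results already proved, once we have checked that the hypothesis on $\cat{C}$ being a non-spherical fusion category puts us in the scope of those results and that our string-net construction reduces to Runkel's in this special case.

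First, I would observe that any fusion category is semisimple and in particular every object is projective, so $\Proj\cat{C}=\cat{C}$. This means that the non-semisimple string-net construction from Section~\ref{secstringnet}, which only admits projective labels, coincides with the string-net construction of~\cite{Non-s-string-nets} when $\cat{C}$ is a fusion category, because Runkel's construction allows arbitrary (equivalently, all) labels. Hence all statements proved in this article for $\snc$ and $\SNC$ transfer verbatim to Runkel's string-nets.

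Next, I would invoke Theorem~\ref{thmlexmf} (or equivalently Theorem~\ref{thmsnmf} together with the finite cocompletion of Section~\ref{freecomcompletion}) to conclude that $\SNC$ is an open-closed modular functor in $\Rexf$. The value on the circle is $Z(\cat{C})$ by Theorem~\ref{thmlexmf} and Theorem~\ref{thmfunctorH}, and by Theorem~\ref{thmcycfrE2} its ribbon Grothendieck-Verdier structure is given by the dualizing object $\alpha\in Z(\cat{C})$, where $\alpha$ is the distinguished invertible object of $\cat{C}$ promoted to an object of the Drinfeld center through the pivotal structure and the Radford isomorphism; the duality functor is $D=-^\vee\otimes\alpha^{-1}$.

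Finally, I would argue that this Grothendieck-Verdier duality is genuinely non-rigid. By the discussion at page~\pageref{mentionmwcenter} (based on~\cite{mwcenter}), the dualizing object $\alpha$ is isomorphic to the monoidal unit $I\in Z(\cat{C})$ if and only if $\cat{C}$ is spherical. Since $\cat{C}$ is non-spherical by assumption, $\alpha\not\cong I$ in $Z(\cat{C})$, so $D$ does not agree with the rigid duality $-^\vee$ of $Z(\cat{C})$, proving non-rigidity. There is no real obstacle here: the corollary is essentially a packaging of Theorem~\ref{thmmain}, Theorem~\ref{thmcycfrE2} and Theorem~\ref{thmlexmf} in a specialized setting, and the only mild subtlety is the identification of our string-net construction with Runkel's, which follows from semisimplicity.
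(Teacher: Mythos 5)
Your proposal is correct and matches the paper's approach exactly: observe that semisimplicity of a fusion category makes $\Proj\cat{C}=\cat{C}$ so that our string-net construction coincides with Runkel's, then specialize Theorem~\ref{thmmain}, Theorem~\ref{thmlexmf} and Theorem~\ref{thmcycfrE2}, and use the fact that $\alpha\cong I$ in $Z(\cat{C})$ if and only if $\cat{C}$ is spherical to conclude non-rigidity. The only difference is that you spell out the semisimplicity and non-rigidity observations explicitly, whereas the paper states them implicitly in the preceding paragraph.
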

	
	\begin{remark}
		Corollary~\ref{corconformalblocks} tells us that for the disk and one variable boundary label the non-spherical string-net construction gives us the functor $Z(\cat{C})(-,\alpha^{-1})^*$, so that $\alpha^{-1}$ acts as a background charge in the sense of \cite[Remark~7.2]{Non-s-string-nets}.
		\end{remark}

	\begin{corollary}[$\text{originally~\cite{balsamkirillov,balsam2,balsam,tuvi}}$]\label{cortvrt}
		For a spherical fusion category $\cat{C}$,
		\begin{align}Z^\text{TV}_\cat{C} \simeq Z^\text{RT}_{Z(\cat{C})}\end{align}
		as once-extended three-dimensional topological field theories.
		\end{corollary}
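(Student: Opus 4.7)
The plan is to chain together equivalences of modular functors and then upgrade to a comparison of once-extended three-dimensional topological field theories via the presentation of the three-dimensional bordism bicategory from~\cite{BDSPV15}.

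First, I would apply Theorem~\ref{thmmain} to $\cat{C}$, viewed as a pivotal finite tensor category: this yields an equivalence between the string-net modular functor $\SNC$ and the Lyubashenko modular functor for the Drinfeld center $Z(\cat{C})$. Since $\cat{C}$ is a spherical fusion category, $Z(\cat{C})$ is a modular fusion category by~\cite{mueger-sph}, so the Lyubashenko modular functor is canonically identified with the modular functor underlying the Reshetikhin--Turaev topological field theory $Z^\text{RT}_{Z(\cat{C})}$. On the other hand, the Kirillov--Bartlett identification~\eqref{eqntvsn} discussed in the introduction \cite{kirillovsn,bartlett} identifies the string-net modular functor with the modular functor underlying the Turaev--Viro theory $Z^\text{TV}_\cat{C}$. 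Composing these three equivalences yields an equivalence between the modular functors underlying $Z^\text{TV}_\cat{C}$ and $Z^\text{RT}_{Z(\cat{C})}$, and hence in particular an identification of mapping class group representations on closed surfaces.

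To upgrade this two-dimensional comparison to an equivalence of once-extended three-dimensional topological field theories, I would invoke the generators-and-relations presentation of the three-dimensional bordism bicategory as a symmetric monoidal bicategory given in~\cite{BDSPV15}: a once-extended 3D TFT valued in a suitable target bicategory of linear categories is determined by the underlying modular functor together with the image of a short list of generating 3-cobordisms, and for both $Z^\text{TV}_\cat{C}$ and $Z^\text{RT}_{Z(\cat{C})}$ the latter images are fixed by the modular fusion category $Z(\cat{C})$. Checking compatibility of the modular functor equivalence produced above with these generating 3-cobordisms then promotes it to an equivalence of once-extended TFTs.

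The main obstacle is the final compatibility check: the earlier steps are essentially concatenations of previously established equivalences, whereas ensuring that the composite equivalence of modular functors respects the \cite{BDSPV15} generating 3-cobordisms on the nose is where the three-dimensional content genuinely enters. Once this verification is in hand, the proof is complete.
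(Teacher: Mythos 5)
Your plan touches the right references but is organized in a way that both overcomplicates the argument and relies on a step that the paper explicitly flags as unavailable. The decisive simplification offered by \cite{BDSPV15} is not that a once-extended 3D TFT is ``determined by the underlying modular functor together with the images of some generating 3-cobordisms'' --- it is that a once-extended 3D TFT is determined up to equivalence by the \emph{balanced braided (modular fusion) category it assigns to the circle}. So the paper's proof does not go via a comparison of modular functors at all and does not need any final ``compatibility check on the nose'' with generating 3-cobordisms: one reduces directly to comparing the braided balanced categories on $\mathbb{S}^1$, identifies the one for $Z^\text{RT}_{Z(\cat{C})}$ as $Z(\cat{C})$ and the one for $Z^\text{TV}_\cat{C}$ (via \cite{bartlett}, together with \cite{BDSPV15} and \cite{bartlettgoosen}) as $\SNC(\mathbb{S}^1)$, and invokes Theorem~\ref{thmcycfrE2}. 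The ``main obstacle'' you point to is therefore an artifact of the wrong framing, not a genuine step of the proof.

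The more serious gap is your third equivalence: you cite \cite{kirillovsn,bartlett} to ``identify the string-net modular functor with the modular functor underlying the Turaev--Viro theory.'' As the introduction of the paper explains at some length, this identification is \emph{not} available as an equivalence of modular functors: \cite{kirillovsn} gives vector-space isomorphisms, \cite{bartlett} upgrades these to equivariant isomorphisms for closed surfaces (via the Juh\'asz presentation, which has no codimension-two part), and the full modular-functor statement requires \cite{bartlettgoosen}, which in turn assumes the as-yet-unproven presentation from \cite{BDSPV15}. The paper navigates this carefully by only needing the comparison at the level of the circle category and explicitly noting that even that weaker statement requires assuming \cite{BDSPV15} plus \cite{bartlettgoosen}. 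Your chain of equivalences treats this as a known equivalence of modular functors, which is stronger than what the literature provides, so as written the argument does not go through.

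A further minor point: you invoke Theorem~\ref{thmmain}, the full modular-functor comparison. The paper only needs Theorem~\ref{thmcycfrE2}, the genus-zero (ribbon Grothendieck--Verdier) comparison, since by \cite{BDSPV15} the circle-level data is all that matters; pulling in the full modular-functor equivalence is logically harmless but is more than the argument requires.
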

	
	\begin{proof}[\slshape New proof  using string-nets, assuming~\cite{BDSPV15}]
		By \cite{BDSPV15} it suffices to prove that the balanced braided categories obtained by evaluation
		of both field theories on the circle are equivalent.
		These categories are necessarily modular fusion categories, again by \cite{BDSPV15}, so we may equivalently ask them to be equivalent as ribbon Grothendieck-Verdier categories~\cite[Corollary~4.4]{mwcenter}. 
		For $Z^\text{RT}_{Z(\cat{C})}$, this modular fusion category is $Z(\cat{C})$;
		for $Z^\text{TV}_\cat{C}$ we obtain, thanks to \cite{bartlett}, $\SNC(\mathbb{S}^1)$. Strictly speaking, \cite{bartlett} does not give us this comparison as balanced braided categories as explained in the introduction, but it does follow if we assume the results of \cite{BDSPV15} and use additionally \cite{bartlettgoosen}.
		Now the claim follows from Theorem~\ref{thmcycfrE2}.
		\end{proof}
	
	\begin{remark}[Transfer of knowledge to and from admissible skein modules in dimension two]\label{remasm2}
		The relation to admissible skein modules that was explained in Remark~\ref{relasm} can be used for a substantial transfer of results between \cite{asm} and our paper, simply because we can relate the \emph{constructions}, but have almost no intersection as far as \emph{results} about the constructions are concerned. 
		We do not want to expand on this comparison too much here and just state the main points.
		From the results in \cite{asm}, we learn the following facts about the string-net spaces:
		\begin{itemize}
			\item As explained in Remark~\ref{relasm}, the finite-dimensionality of string-net spaces could have been deduced from \cite[Section~5.4]{asm}.

			\item From \cite[Theorem~3.1]{asm}, we learn that the string-net space $\SNC(\mathbb{S}^2)$ is (dual to)
			the space of two-sided modified traces on $\Proj \cat{C}$. This space is $k$ if $\cat{C}$ is spherical, zero otherwise. If we combine these two insights, we recover  the algebraic result \cite[Corollary~6.11]{shibatashimizu} that a two-sided modified trace on $\Proj \cat{C}$ exists if and only if $\cat{C}$ is spherical.
			\end{itemize}
		Conversely, we learn the following facts about admissible skein modules through our results on $\SNC$:
		\begin{itemize}
			\item From Theorem~\ref{thmexcision}, it follows that admissible skein modules in dimension two satisfy excision.
			
			\item From Theorem~\ref{thmmain}, we learn that the admissible skein modules in dimension two for $\Proj \cat{C}$ extend to an open-closed modular functor, namely the Lyubashenko modular functor for $Z(\cat{C})$ that has an open-closed extension thanks to Corollary~\ref{coranomalyopenclosed}~\ref{coranomalyopenclosedii}.

			\end{itemize}
		\end{remark}
	
	\small	
\newcommand{\etalchar}[1]{$^{#1}$}

\end{document}